\title{\bf\boldmath 
Remarks on 
$\tau$-functions for the difference Painlev\'e equations
of type $E_8$}
\author{
Masatoshi NOUMI\footnote{
Department of Mathematics, Kobe University, 
Rokko, Kobe  657-8501, Japan
\newline\quad
{\em 2010 Mathematics Subject Classification.}  
39A20; 33E17, 33D70
\newline\quad
{\em Key words and Phrases.}  
elliptic Painlev\'{e} equation, $E_8$ lattice, 
$\tau$-function, Hirota equation, Casorati 
\newline\quad
determinant, elliptic hypergeometric integral}
}
\date{}
\newtheorem{thm}{Theorem}[section]
\newtheorem{prop}[thm]{Proposition}
\newtheorem{lem}[thm]{Lemma}
\newtheorem{dfn}[thm]{Definition}
\newtheorem{exm}[thm]{Example}
\numberwithin{equation}{section}
\newcommand{\bC}{\mathbb{C}}
\newcommand{\bN}{\mathbb{N}}
\newcommand{\bP}{\mathbb{P}}
\newcommand{\bR}{\mathbb{R}}
\newcommand{\bZ}{\mathbb{Z}}
\newcommand{\cC}{\mathcal{C}}
\newcommand{\cF}{\mathcal{F}}
\newcommand{\cG}{\mathcal{G}}
\newcommand{\frh}{\mathfrak{h}}
\newcommand{\la}{\langle}
\newcommand{\ra}{\rangle}
\newcommand{\isoto}{\stackrel{\sim}{\rightarrow}}
\newcommand{\isofrom}{\stackrel{\sim}{\leftarrow}}
\newcommand{\bsf}[1]{\mbox{\sf #1}}
\newcommand{\bssf}[1]{\mbox{\scriptsize\sf #1}}
\newcommand{\br}[1]{\{#1\}}
\newcommand{\brm}[2]{\big\{#1\,\big|\,#2\big\}}
\newcommand{\ipr}[2]{(#1|#2)}
\newcommand{\hf}{\tfrac{1}{2}}
\newcommand{\shf}{{\mbox{\scriptsize $\frac{1}{2}$}}}
\newcommand{\I}{{\rm I}}
\newcommand{\II}{{\rm II}}
\newcommand{\sI}{{\mbox{\scriptsize\rm I}}}
\newcommand{\sII}{\mbox{{\scriptsize\rm II}}}
\newcommand{\ep}{\epsilon}
\newcommand{\vep}{\varepsilon}
\renewcommand{\Im}{{\rm Im}\,}
\newcommand{\sgn}{{\rm sgn}}
\newcommand{\Hom}{{\rm Hom}}
\newcommand{\scirc}{\mbox{\tiny$\circ$}}
\newcommand{\qed}{\hfill $\square$\par\smallskip}
\newenvironment{proof}[1]{
\par\smallskip\noindent{{\sl #1}\,:}
}{\hfill $\square$\par\smallskip}
\begin{document}
\maketitle
\begin{abstract}
We investigate the structure of $\tau$-functions for 
the elliptic difference Painlev\'{e} equation of type $E_8$.  
Introducing the notion of ORG $\tau$-functions 
for the $E_8$ lattice,  
we construct some particular solutions which 
are expressed in terms of elliptic hypergeometric integrals.  
Also, we discuss how this construction is related to 
the framework of 
lattice $\tau$-functions associated with the configuration of 
generic nine points in the projective plane. 
\end{abstract}

\section*{Introduction}
There are several approaches to difference (or discrete) 
Painlev\'e equations associated with the root system of 
type $E_8$.  In the geometric approach of Sakai 
\cite{Sakai2001}, there are three 
discrete Painlev\'e equations with affine Weyl group 
symmetry of type $E^{(1)}_8$, which may be called 
rational, trigonometric and elliptic.  
They are formulated 
in the language of 
certain rational surfaces 
obtained 
from $\bP^2$ by blowing-up at generic nine points, 
and regarded as master families of second order 
discrete Painlev\'{e} equations. 
On the other hand, 
Ohta--Ramani--Grammaticos \cite{ORG2001} introduced 
the elliptic (difference) Painlev\'{e} equation of type 
$E_8$ and its $\tau$-functions 
as a discrete system on the root lattice 
of type $E_8$. 
Equivalence of these two approaches has been 
clarified by Kajiwara et al.\,\cite{KMNOY2003, KMNOY2006} 
in the framework of the birational affine Weyl group action 
on the configuration space 
of generic nine points in $\bP^2$ and the associated 
lattice $\tau$-functions. 
Besides these approaches, the elliptic Painlev\'{e} 
equation is interpreted as the compatibility condition of 
certain linear difference equations 
in two ways 
by Rains \cite{Rains2011} 
and by Noumi--Tsujimoto--Yamada \cite{NTY2013}. 
Also, it is known that the elliptic difference Painlev\'{e} equation 
has particular solutions which are expressible in terms of 
elliptic hypergeometric functions as in 
Kajiwara et al.\,\cite{KMNOY2003}, 
Rains \cite{Rains2005, Rains2011} and 
Noumi--Tsujimoto--Yamada \cite{NTY2013}.  

\par\medskip
In this paper we introduce the notion of {\em ORG $\tau$-functions}, 
which is a reformulation of 
$\tau$-functions associated with the $E_8$ lattice proposed 
by Ohta--Ramani--Grammaticos \cite{ORG2001}.  
We fix a realization of the root lattice 
$P=Q(E_8)$ of type $E_8$ in the 8-dimensional complex 
vector space $V=\bC^8$ 
endowed with the canonical symmetric bilinear form 
$\ipr{\cdot}{\cdot}:\ V\times V\to\bC$ (Section \ref{sec:1}),  
and take a subset $D\subseteq V$ such that 
$D+P\delta=D$, where 
$\delta\in\bC^\ast$ is a nonzero constant.  
A function $\tau(x)$ defined on $D$ is called 
an {\em ORG $\tau$-function} if it satisfies a system
of non-autonomous Hirota equations 
\begin{equation}\label{eq:Hirotaabc}
[\ipr{b\pm c}{x}]\tau(x\pm a\delta)
+[\ipr{c\pm a}{x}]\tau(x\pm b\delta)
+[\ipr{a\pm b}{x}]\tau(x\pm c\delta)
=0
\end{equation}
for all $C_3$-frames $\br{\pm a, \pm b,\pm c}$ 
relative to $P$ (Section \ref{sec:2}).  Here $[z]$ denotes 
any nonzero entire function in $z\in\bC$ which satisfies 
the three term relation \eqref{eq:three-term}, 
and each double sign in \eqref{eq:Hirotaabc}
indicates the product of two functions with different signs. 
When the domain $D$ is a disjoint union 
\begin{equation}
D_c=\bigsqcup_{n\in\bZ} H_{c+n\delta},
\quad
H_{c+n\delta}=\brm{x\in V}{\ipr{\phi}{x}=c+n\delta},
\vspace{-4pt}
\end{equation}
of parallel hyperplanes perpendicular to 
$\phi=(\hf,\hf,\ldots,\hf)$, each ORG $\tau$-function 
$\tau=\tau(x)$ on $D_c$ (of type $E_8$) 
is regarded as an infinite chain of ORG $\tau$-functions 
$\tau^{(n)}=\tau|_{H_{c+n\delta}}$ on $H_{c+n\delta}$ 
of type $E_7$ ($n\in\bZ$) (Section \ref{sec:3}).
We say that a meromorphic ORG $\tau$-function 
$\tau(x)$ on $D_c$ 
is a {\em hypergeometric $\tau$-function} 
if $\tau^{(n)}(x)=0$ $(n<0)$ and 
$\tau^{(0)}(x)\not\equiv 0$. 
Supposing that $\omega\in\Omega$ is a period of $[z]$, 
consider the case where $D=D_\omega$.  
In such a case, on the basis of a recursion 
theorem (Theorem \ref{thm:3C})
one can show
that, 
if a given pair of functions 
$\tau^{(0)}(x)$ ($x\in H_{\omega})$ and 
$\tau^{(1)}(x)$ ($x\in H_{\omega+\delta}$) 
satisfies certain initial conditions, then 
there exists a unique hypergeometric $\tau$-function 
$\tau(x)$ $(x\in D_\omega)$ 
having those $\tau^{(0)}(x)$, $\tau^{(1)}(x)$ 
as the first two components (Theorem \ref{thm:4A}). 
Furthermore, if one can specify a gauge factor for 
$\tau^{(1)}$ with respect to a $C_3$-frame of 
type $\II_1$, then 
for each $n=0,1,2,\ldots$
the $n$th component $\tau^{(n)}(x)$ 
($x\in H_{\omega+n\delta}$)
of the hypergeometric $\tau$-function 
is expressed in terms of a {\em 2-directional Casorati 
determinant} with respect to the $C_3$-frame
of type $\II_1$ (Theorem \ref{thm:4B}). 
A proof of the recursion theorem (Thoerem \ref{thm:3C}) 
will be given in Appendix A. 

In the latter half of this paper, we apply our arguments 
to the elliptic case for constructing 
hypergeometric ORG $\tau$-functions which are 
expressible in terms of 
elliptic hypergeometric integrals of 
Spiridonov \cite{Spiridonov2003, Spiridonov2005}
and 
Rains \cite{Rains2005, Rains2010}. 
The fundamental elliptic hypergeometric integral
is the meromorphic 
function 
$I(u;p,q)$ in eight variables $u=(u_0,u_1,\ldots,u_7)\in
(\bC^\ast)^8$ defined by
\begin{equation}
I(u;p,q)=
\frac{(p;p)_\infty(q;q)_\infty}{4\pi\sqrt{-1}}
\int_{C}
\frac{\prod_{k=0}^{7}\Gamma(u_kz^{\pm1};p,q)}{\Gamma(z^{\pm2};p,q)}\dfrac{dz}{z}. 
\end{equation}
After recalling basic facts concerning 
elliptic hypergeometric 
integrals in Section \ref{sec:5}, 
we present in Section \ref{sec:6} two types of explicit 
representations 
for the $W(E_7)$-invariant hypergeometric 
ORG $\tau$-functions, one by determinants 
(Theorem \ref{thm:6A})
and the other by multiple integrals (Theorem \ref{thm:6B}). 
Theorems \ref{thm:6A} and \ref{thm:6B} will be proved in 
Section \ref{sec:7}.  
In particular we show there 
how the 2-directional Casorati 
determinant gives rise to the multiple elliptic hypergeometric 
integral of Rains \cite{Rains2010}: 
\begin{equation}\label{eq:multellint}
\begin{split}
&I_n(u;p,q)
\\
&=
\frac{(p;p)_{\infty}^n(q;q)_\infty^{n}}
{2^n n! (2\pi\sqrt{-1})^n}
\int_{C^n}
\prod_{i=1}^{n}
\frac{
\prod_{k=0}^{7}\Gamma(u_kz_i^{\pm1};p,q)
}{\Gamma(z_i^{\pm2};p,q)}
\prod_{1\le i<j\le n}
\theta(z_i^{\pm1}z_j^{\pm1};p)
\frac{dz_1\cdots dz_n}{z_1\cdots z_n}.
\end{split}
\end{equation}
We also give some remarks in Section \ref{sec:8} on 
variations of hypergeometric 
ORG $\tau$-functions obtained by 
transformations in Theorem \ref{thm:2B}. 

In the final section, we discuss how the notion of ORG 
$\tau$-functions is related to that of 
lattice $\tau$-functions as discussed by 
Kajiwara et al.\,\cite{KMNOY2006}
in the context of the configuration space of generic 
nine points in $\bP^2$.  Some remarks are also 
given on the similar picture in the case of the 
configuration space of generic 
eight points in $\bP^1\times\bP^1$ 
as in Kajiwara--Noumi--Yamada \cite{KNY2015}.  

\par\medskip
On this occasion I would like to 
give some personal comments on the position 
of this paper.  
The contents of this paper are not completely new, 
and many things presented here may be found 
in the literature. 
In fact, in the group of coauthors of \cite{KMNOY2006}, 
basic structures of the ORG $\tau$-functions were already known around the end of 2004, including 
the 
2-directional Casorati determinant representation 
of the hypergeometric $\tau$-functions.  
(Some part of our discussion is reflected in the work 
on Masuda \cite{Masuda2011}.)
It was almost at the same time that 
Rains \cite{Rains2005} clarified that 
his multiple elliptic hypergeometric integrals 
\eqref{eq:multellint} 
satisfy certain quadratic relations of Hirota type that should 
be understood in the context of the 
elliptic Painlev\'{e} equation.  
It took a couple of years, however, for us to be 
able to confirm that 
the 2-directional Casorati determinants 
for a particular choice of $C_3$-frame
of type $\II_1$ certainly give rise to the multiple elliptic 
hypergeometric integrals of Rains.  
As to further delay in presenting the detail of 
such an argument, 
I apologize just adding that
it requires much more time and effort 
than might be imagined  
to accomplish a satisfactory paper 
in the language of a cultural sphere to which 
its author does not belong. 

{
\small
\baselineskip=0pt
\setcounter{tocdepth}{1}
\tableofcontents
}

\newpage
\section{\boldmath $E_8$ lattice and $C_l$-frames}
\label{sec:1}
We begin by recalling some basic 
facts concerning the root lattice of type $E_8$. 
\par\medskip
Let 
$V=\bC^8=\bC v_0\oplus\bC v_1\oplus\cdots\oplus\bC v_7$ 
be the 8-dimensional complex vector space 
with canonical basis $\br{v_0,v_1,\ldots,v_7}$, 
and $\ipr{\cdot}{\cdot}: V\times V\to \bC$ the scalar product
(symmetric bilinear form) such that 
$\ipr{v_i}{v_j}=\delta_{ij}$ ($i,j\in\br{0,1,\ldots,7}$). 
Setting
\begin{equation}
\phi=(\hf,\hf,\ldots,\hf)=\hf(v_0+v_1+\cdots+v_7)\in V,
\end{equation}
we realize the root lattice $Q(E_8)$ and the root 
system $\Delta(E_8)$ of type $E_8$ as
\begin{equation}
P=\brm{a\in\bZ^8\cup(\phi+\bZ^8)}{\ipr{\phi}{a}\in\bZ}\subset V,
\quad 
\Delta(E_8)=\brm{\alpha\in P}{\ipr{\alpha}{\alpha}=2}, 
\end{equation}
respectively (see \cite{CS1993} for instance). 
This set $P$ forms a free $\bZ$-module of rank 8 and 
the scalar product takes integer values on $P$.  
The theta series of the lattice 
$P=Q(E_8)$ is given by
\begin{equation}
\sum_{a\in P}q^{\ipr{a}{a}}=
1+240q^2+2160q^4+6720q^6+17520q^8
+30240 q^{10}+\cdots, 
\end{equation}
and $\Delta(E_8)$ 
consists of the following 240 
vectors in $P$:
\begin{equation}
\begin{array}{llllr}
(1) & \pm v_i\pm v_j &(0\le i<j\le 7)
&\cdots&\binom{8}{2}\cdot 4=112,
\\[4pt]
(2) & \hf(\pm v_0\pm v_1\pm\cdots\pm v_7)\quad
&(\mbox{even number of $-$ signs})
&\cdots&2^7=128.
\end{array}
\end{equation}
In this root system, we take the {\em simple roots}
\begin{equation}
\alpha_0=\phi-v_0-v_1-v_2-v_3,\quad
\alpha_j=v_j-v_{j+1}\ \ (j=1,\ldots,6),\quad
\alpha_7=v_7+v_0
\end{equation}
corresponding to the Dynkin diagram
\begin{equation}\label{eq:DynkinE8}
\begin{picture}(170,30)(0,20)
\multiput(0,20)(24,0){7}{\circle{4}}
\multiput(2,20)(24,0){6}{\line(1,0){20}}
\put(48,44){\circle{4}}\put(48,22){\line(0,1){20}}
\put(-4,8){\small $\alpha_1$}
\put(20,8){\small $\alpha_2$}
\put(44,8){\small $\alpha_3$}
\put(68,8){\small $\alpha_4$}
\put(92,8){\small $\alpha_5$}
\put(116,8){\small $\alpha_6$}
\put(140,8){\small $\alpha_7$}
\put(52,46){\small $\alpha_0$}
\end{picture}
\vspace{10pt}
\end{equation}
so that 
$P=Q(E_8)=\bZ\alpha_0\oplus
\bZ\alpha_1\oplus\cdots\oplus\bZ\alpha_7$.  
(For mutually distinct $i,j\in\br{0,1,\ldots,7}$,
$\ipr{\alpha_i}{\alpha_j}=-1$ if the two nodes 
named $\alpha_i$ and $\alpha_j$ are connected by an edge, 
and $\ipr{\alpha_i}{\alpha_j}=0$ otherwise.)
The vector 
\begin{equation}\label{eq:phiE8}
\phi=3\alpha_0+2\alpha_1+4\alpha_2+6\alpha_3
+5\alpha_4+4\alpha_5+3\alpha_6+2\alpha_7
\end{equation}
is called the {\em highest root} with respect to the simple roots 
$\alpha_0,\alpha_1,\ldots,\alpha_7$. 
Note also that 
the weight lattice $P(E_8)$ coincides with 
the root lattice $Q(E_8)$ in this $E_8$ case.  

For each $\alpha\in V$ with $\ipr{\alpha}{\alpha}\ne 0$, 
we define the {\em reflection} $r_\alpha: V\to V$ with 
respect to $\alpha$ by 
\begin{equation}
r_\alpha(v)=v-\ipr{\alpha^\vee}{v}\alpha\qquad(v\in V), 
\end{equation}
where $\alpha^\vee=2\alpha/\ipr{\alpha}{\alpha}$. 
The Weyl group 
$W(E_8)=\la r_{\alpha}\ (\alpha\in \Delta(E_8))\ra$ 
of type $E_8$ acts on $V$ as a group of isometries;
it stabilizes the root lattice $P=Q(E_8)$
and the root system $\Delta(E_8)$. 
We denote by $s_j=r_{\alpha_j}$  ($j=0,1,\ldots,7$) 
the {\em simple reflections}. Then, 
$W(E_8)=\la s_0,s_1,\ldots,s_7\ra$ is the Coxeter group 
associated with the Dynkin diagram \eqref{eq:DynkinE8}. 
This group is generated by 
$s_0, s_1,\ldots,s_7$ with fundamental relations 
$s_j^2=1$ ($j=0,1,\ldots,7$), 
$s_is_j=s_js_i$ for distinct $i,j\in\br{0,1,\ldots,7}$ 
with $\ipr{\alpha_i}{\alpha_j}=0$ and 
$s_is_js_i=s_js_is_j$ for distinct $i,j\in\br{0,1,\ldots,7}$ 
with $\ipr{\alpha_i}{\alpha_j}=-1$. 

\par\medskip
In the $E_8$ lattice $P=Q(E_8)$, the root lattice 
$Q(E_7)$ and the root system $\Delta(E_7)$ 
of type $E_7$ are 
realized as 
\begin{equation}
\begin{split}
Q(E_7)&=\brm{a\in P}{\ipr{\phi}{a}=0}=
\bZ\,\alpha_0\oplus
\bZ\,\alpha_1\oplus
\cdots\oplus
\bZ\,\alpha_6,
\\[4pt]
\Delta(E_7)&=\brm{\alpha\in\Delta(E_8)}{\ipr{\phi}{\alpha}=0},
\end{split}
\end{equation}
respectively. 
The root system $\Delta(E_7)$ consists of the following 
126 vectors:
\begin{equation}
\begin{array}{llllr}
(1) & \pm(v_i-v_j) & (0\le i<j\le 7) &\cdots& \binom{8}{2}\cdot 2=56,
\\[4pt]
(2) & \hf(\pm v_0\pm v_1\pm\cdots\pm v_7)
&(\mbox{four $-$ signs})&\cdots & 
\binom{8}{4}=70.
\end{array}
\end{equation}
The highest root of $\Delta(E_7)$ with respect to 
the simple roots $\alpha_0,\alpha_1,\ldots,\alpha_6$ 
is given by
\begin{equation}
v_1-v_0=2\alpha_0+2\alpha_1+3\alpha_2+
4\alpha_3+3\alpha_4+2\alpha_5+\alpha_6. 
\end{equation}
We denote by $W(E_7)=\la r_{\alpha}\ (\alpha\in\Delta(E_7))\ra
=\la s_0,s_1,\ldots,s_6\ra$ the Weyl group of type $E_7$. 
Note 
that $W(E_7)$ contains the symmetric group 
$\mathfrak{S}_8=\la r_{v_0-v_1}, s_1,\ldots,s_6\ra$ acting 
on $V$ through the permutation of $v_0,v_1,\ldots,v_7$, 
and is generated by this $\mathfrak{S}_8$ together with 
the reflection $s_0$ with respect to $\alpha_0=\phi-v_0-v_1-v_2-v_3$. 

\par\medskip
In this paper, 
the following notion of $C_l$-frames 
plays a fundamental role. 

\begin{dfn} \rm 
For $l=1,2,\ldots,8$, 
a set $A=\br{\pm a_0,\pm a_1,\ldots,\pm a_{l-1}}$ 
of $2l$ vectors in $V$ is called a {\em $C_{l}$-frame} 
({\em relative to} $P$), if 
the following two conditions are satisfied:
\smallskip
\newline
\quad 
$(1)$\quad
$\ipr{a_i}{a_j}=\delta_{ij}$\quad$(0\le i,j<l)$, 
\smallskip
\newline
\quad
$(2)$\quad
$a_i\pm a_j\in P$\ \ $(0\le i<j<l)$,\quad 
$2a_i\in P$\ \ $(0\le i<l)$. 
\end{dfn}
This condition for $A=\br{\pm a_0,\ldots,\pm a_{l-1}}$ 
means that the set of $2l^2$ vectors 
\begin{equation}
\Delta_{A}(C_l)=\brm{\pm a_i\pm a_j}{0\le i<j<l)}\cup
\brm{\pm 2a_i}{0\le i<l}\subset P
\end{equation}
form a root system of type $C_l$. 
For $l=1,2,\ldots,8$,
we denote by $\cC_l$ the set of all $C_l$-frames.
\begin{exm}[Typical examples of $C_8$-frames]
\label{exm:C8frames}
\begin{equation*}
\begin{split}
(0)\quad &A_0=\br{\pm v_0,\pm v_1,\ldots,\pm v_7}.
\\
(1)\quad &A_1=\br{\pm a_0,\pm a_1,\ldots,\pm a_7}, 
\\
&\begin{array}{ll}
a_0=\hf(v_0+v_1+v_2+v_3), 
&
a_4=\hf(v_4-v_5-v_6+v_7),
\\[4pt]
a_1=\hf(v_0+v_1-v_2-v_3), 
&
a_5=\hf(-v_4+v_5-v_6+v_7),
\\[4pt]
a_2=\hf(v_0-v_1+v_2-v_3), 
&
a_6=\hf(-v_4-v_5+v_6+v_7),
\\[4pt]
a_3=\hf(v_0-v_1-v_2+v_3), 
&
a_7=\hf(v_4+v_5+v_6+v_7).
\end{array}
\\
(2)\quad &A_2=\br{\pm a_0,\pm a_1,\ldots,\pm a_7},
\\
&\ 
a_0=\hf(\phi+v_0-v_7),\quad 
a_7=\hf(\phi-v_0+v_7),
\\
&\ 
a_j=v_j+\hf(v_0+v_7-\phi)\quad(j=1,\ldots,6).
\end{split}
\end{equation*}
\end{exm}

In the following, 
we denote by $N(v)=\ipr{v}{v}$ 
the square norm of $v\in V$, 
and by $\varphi(v)=\ipr{\phi}{v}$ 
the scalar product of $v$ with $\phi$.
Also, for a subset $S\subseteq V$ given, 
we use the notations
\begin{equation}
S_{N=k}=\brm{v\in S}{\ipr{v}{v}=k},\quad
S_{\varphi=k}=\brm{v\in S}{\ipr{\phi}{v}=k}
\qquad(k\in\bC)
\end{equation}
to refer to the level sets of $N$ and $\varphi$ 
respectively.  

Note that each $C_l$-frame ($l=1,2,\ldots,8$)
is formed by vectors 
in $(\hf P)_{N=1}=\hf(P_{N=4})$.  The set $P_{N=4}$
of all vectors in $P$ with square norm 4 
consists 
of the following $2160$ vectors that are 
classified into three groups under the action of 
the symmetric group $\mathfrak{S}_8$: 
\begin{equation}
\arraycolsep=2pt
\begin{array}{llllr}
(0) & \pm 2v_0 
&&\cdots&8\cdot 2=16,
\\[4pt]
(1) & \pm v_0\pm v_1\pm v_2\pm v_3 
&&\cdots&\binom{8}{4}\cdot 2^4=1120,
\\[4pt]
(2) & \hf(\pm 3v_0\pm v_1\pm\cdots\pm v_7)\quad
&(\mbox{odd number of $-$ signs})
&\cdots&8\cdot 2^7=1024. 
\end{array}
\end{equation}
The Weyl group $W(E_8)$ acts on $P_{N=4}$ transitively.  
In fact we have $P_{N=4}\cap P^+=\br{\phi-v_0+v_1}$, 
where $P^+=\brm{v\in P}{\ipr{\alpha_j}{v}\ge 0 (0\le j\le 7)}$ 
stands for the cone of dominant integral weights. 
It turns out that the stabilizer of $\phi-v_0+v_1$ is $W(D_7)$ 
and that 
\begin{equation}
P_{N=4}\isofrom W(E_8)/W(D_7),\quad
|P_{N=4}|=|W(E_8)/W(D_7)|=2160. 
\end{equation}
The following two propositions can be verified 
directly 
on the basis of this transitive action of $W(E_8)$ on $P_{N=4}$. 

\begin{prop}\label{prop:1A}\ 
\par\smallskip\noindent
\quad$(1)$ For each $a\in(\hf P)_{N=1}$, there exists a unique 
$C_8$-frame containing $a$.  
\par\smallskip\noindent
\quad$(2)$ The set $(\hf P)_{N=1}$ is the disjoint union 
of all $C_8$-frames$:$ $(\hf P)_{N=1}=\bigsqcup_{A\in\cC_8} A$.  
\par\smallskip\noindent
\quad$(3)$ The number of $C_8$-frames is given by 
$|\cC_8|=2160/16=135$. 
\qed
\end{prop}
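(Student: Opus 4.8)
The plan is to reduce all three assertions to a single computation by using the transitive action of $W(E_8)$ on $P_{N=4}$, equivalently on $(\hf P)_{N=1}=\hf(P_{N=4})$; here the doubling map $v\mapsto 2v$ is a $W(E_8)$-equivariant bijection between these two sets, since $\hf P=\brm{v\in V}{2v\in P}$ and $N(2v)=4N(v)$.

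First I would record that $W(E_8)$ acts on the set $\cC_8$ of $C_8$-frames: every $w\in W(E_8)$ is an isometry stabilising $P$, hence preserves each of the defining conditions $\ipr{a_i}{a_j}=\delta_{ij}$, $a_i\pm a_j\in P$, $2a_i\in P$. Combined with transitivity of $W(E_8)$ on $(\hf P)_{N=1}$, this reduces assertion $(1)$ to the single case $a=v_0$: once we know that $v_0$ lies in a unique $C_8$-frame, an arbitrary $a=w(v_0)$ lies in the $C_8$-frame $w(A_0)$, and uniqueness for $a$ is obtained by transporting uniqueness for $v_0$ back along $w^{-1}$.

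For the base case, existence is immediate, since $A_0=\br{\pm v_0,\ldots,\pm v_7}$ is visibly a $C_8$-frame ($\ipr{v_i}{v_j}=\delta_{ij}$, and $v_i\pm v_j$, $2v_i$ lie in $\bZ^8$ with integral pairing against $\phi$). For uniqueness, suppose $A$ is any $C_8$-frame containing $v_0$; after renaming (and possibly flipping a sign) write $A=\br{\pm v_0,\pm a_1,\ldots,\pm a_7}$. Each $a_i$ with $1\le i\le 7$ is orthogonal to $v_0$, hence has zero $v_0$-component, while $v_0+a_i\in P$ has square norm $N(v_0)+N(a_i)=2$, so $v_0+a_i\in\Delta(E_8)$. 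Inspecting the list of the $240$ roots of $E_8$, the only ones whose coefficient of $v_0$ equals $1$ are $v_0\pm v_q$ with $1\le q\le 7$ (the half-integral roots have $v_0$-coefficient $\pm\hf$); therefore $a_i\in\br{\pm v_1,\ldots,\pm v_7}$ for every $i$. Since $a_1,\ldots,a_7$ are pairwise orthogonal unit vectors, they must be $\pm v_j$ for seven distinct indices $j\in\br{1,\ldots,7}$, so $A=A_0$.

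Finally, $(2)$ and $(3)$ follow formally from $(1)$. By $(1)$ every $a\in(\hf P)_{N=1}$ lies in some $C_8$-frame, and any two $C_8$-frames sharing a vector $a$ both coincide with the unique frame through $a$; hence $(\hf P)_{N=1}=\bigsqcup_{A\in\cC_8}A$. Comparing cardinalities in this partition, with $|(\hf P)_{N=1}|=|P_{N=4}|=2160$ and $|A|=16$ for every $A\in\cC_8$, gives $|\cC_8|=2160/16=135$. The only step that is not pure bookkeeping is the uniqueness assertion in the base case, where one genuinely invokes the explicit description of $\Delta(E_8)$; this is where I expect the (modest) main effort to lie.
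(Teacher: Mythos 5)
Your proposal is correct and follows exactly the route the paper indicates: the paper gives no written proof but states that the proposition ``can be verified directly on the basis of this transitive action of $W(E_8)$ on $P_{N=4}$,'' and your argument is precisely that verification, reducing to the representative $v_0$ and checking uniqueness of the frame $A_0$ via the explicit list of roots. The only detail left tacit (that a $C_8$-frame really has $16$ distinct elements, which follows from $\ipr{a_i}{a_j}=\delta_{ij}$) is harmless.
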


\begin{prop}\label{prop:1B} Fix a positive integer 
$l\in\br{1,\ldots,8}$.  
\par\smallskip\noindent
\quad$(1)$  The Weyl group 
$W(E_8)$ acts transitively on the set $\cC_l$ of all $C_l$-frames. 
\par\smallskip\noindent
\quad$(2)$  Each $C_l$-frame is contained 
in a unique $C_8$-frame.
\par\smallskip\noindent
\quad$(3)$ The number of $C_l$-frames is given by 
$|\cC_l|=135\cdot \binom{8}{l}$. 
\qed
\end{prop}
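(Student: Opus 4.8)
The plan is to establish part~(2) by a direct normalization argument, and then to obtain parts~(1) and~(3) as formal consequences of~(2) together with Proposition~\ref{prop:1A} and the transitivity of $W(E_8)$ on $P_{N=4}$.

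First I would prove~(2). Let $A=\br{\pm a_0,\ldots,\pm a_{l-1}}$ be a $C_l$-frame. Since $(\hf P)_{N=1}=\hf(P_{N=4})$ and $W(E_8)$ acts transitively on $P_{N=4}$, hence on $(\hf P)_{N=1}$, one can choose $w\in W(E_8)$ with $w(a_0)=v_0$; as $w$ preserves $P$ and the scalar product, $w(A)$ is again a $C_l$-frame, so it suffices to show that $w(A)$ is contained in the standard $C_8$-frame $A_0=\br{\pm v_0,\ldots,\pm v_7}$, since then $A\subseteq w^{-1}(A_0)$ exhibits a $C_8$-frame containing $A$. Write $b_i=w(a_i)$, so that $b_0=v_0$. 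For each $i$ with $1\le i\le l-1$ one has $\ipr{b_i}{v_0}=0$ and $v_0\pm b_i\in P$, and the point is that these two conditions already force $b_i\in A_0$. Here a small computation enters: scanning the three $\mathfrak{S}_8$-classes of $P_{N=4}$ recorded above, one finds that the vectors $a\in(\hf P)_{N=1}$ with $\ipr{a}{v_0}=0$ are exactly the $\pm v_j$ with $1\le j\le 7$ and the $\hf(\pm v_j\pm v_k\pm v_l\pm v_m)$ with $\br{j,k,l,m}\subseteq\br{1,\ldots,7}$; for $a$ of the latter type, however, the vector $v_0+a$ lies neither in $\bZ^8$ nor in $\phi+\bZ^8$, so $v_0+a\notin P$. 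Hence $b_i\in\br{\pm v_1,\ldots,\pm v_7}\subset A_0$ for every $i$, giving $w(A)\subseteq A_0$. Uniqueness is then immediate, since any $C_8$-frame containing $A$ contains $a_0$ and is therefore the unique $C_8$-frame attached to $a_0$ by Proposition~\ref{prop:1A}\,(1).

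Next I would deduce~(1) and~(3). The map sending a vector of $(\hf P)_{N=1}$ to the unique $C_8$-frame through it, provided by Proposition~\ref{prop:1A}\,(1), is $W(E_8)$-equivariant and surjective, so transitivity of $W(E_8)$ on $(\hf P)_{N=1}$ upgrades to transitivity on $\cC_8$. Given $A,B\in\cC_l$, let $\widetilde A,\widetilde B\in\cC_8$ be the (unique, by~(2)) $C_8$-frames containing them; choosing $w\in W(E_8)$ with $w(\widetilde A)=\widetilde B$ and then moving $\widetilde B$ onto $A_0$, one reduces to the case of two $C_l$-frames contained in $A_0$. Such a frame is nothing but a choice of an $l$-element subset of $\br{0,1,\ldots,7}$---its set of axes---and the subgroup $\mathfrak{S}_8\subset W(E_8)$ permuting $v_0,\ldots,v_7$ stabilizes $A_0$ and acts transitively on these subsets; this proves~(1). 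For~(3), part~(2) makes $A\mapsto\widetilde A$ a well-defined map $\cC_l\to\cC_8$ whose fiber over any $C_8$-frame is the set of $C_l$-frames inside it, a set in bijection with the $l$-element subsets of $\br{0,1,\ldots,7}$, hence of cardinality $\binom{8}{l}$; combined with $|\cC_8|=135$ from Proposition~\ref{prop:1A}\,(3) this gives $|\cC_l|=135\cdot\binom{8}{l}$.

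The only step that needs genuine calculation is the inspection of $(\hf P)_{N=1}$ inside the proof of~(2): one must check that, among the vectors of $(\hf P)_{N=1}$ orthogonal to $v_0$, only the coordinate vectors $\pm v_j$ survive the constraint $v_0\pm b_i\in P$. Everything else is formal manipulation of the $W(E_8)$-action, so the main---and rather mild---obstacle is just to organize that case analysis cleanly using the $\mathfrak{S}_8$-classification of $P_{N=4}$ already at hand.
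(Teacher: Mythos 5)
Your proposal is correct and follows exactly the route the paper indicates: it verifies the claims ``directly on the basis of the transitive action of $W(E_8)$ on $P_{N=4}$'' together with Proposition \ref{prop:1A}, merely supplying the details the paper leaves to the reader. The one genuine computation --- that a vector $a\in(\hf P)_{N=1}$ orthogonal to $v_0$ with $v_0\pm a\in P$ must be one of the $\pm v_j$ --- is carried out correctly, and the deduction of (1) and (3) from (2) via the fibration $\cC_l\to\cC_8$ is sound.
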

\section{\boldmath ORG $\tau$-functions}
\label{sec:2}
In this section we introduce the notion of 
{\em ORG $\tau$-functions}, which is a reformulation 
of $\tau$-functions associated with the $E_8$ lattice 
proposed by Ohta--Ramani--Grammaticos \cite{ORG2001}. 
\par\medskip
We fix once for all a nonzero entire function $[z]$ 
in $z\in\bC$ satisfying the three-term relation
\begin{equation}\label{eq:three-term}
[\beta\pm \gamma][z\pm \alpha]+
[\gamma\pm \alpha][z\pm \beta]+
[\alpha\pm \beta][z\pm \gamma]=0
\qquad(z,\alpha,\beta,\gamma\in\bC). 
\end{equation}
Throughout this paper, we use the abbreviation 
$[\alpha\pm\beta]=[\alpha+\beta][\alpha-\beta]$ 
with a double sign 
indicating the product of two factors. 
From \eqref{eq:three-term} it follows that 
that $[z]$ is an 
odd function ($[-z]=-[z], [0]=0$). 
We remark that the three-term relation \eqref{eq:three-term} 
can be written alternatively as
\begin{equation}
[z\pm \alpha][w\pm \beta]-
[z\pm \beta][w\pm \alpha]=
[z\pm w][\alpha\pm\beta]\qquad(z,w,\alpha,\beta\in\bC), 
\end{equation}
or 
\begin{equation}
\frac{[z\pm\alpha]}{[z\pm\beta]}
-
\frac{[w\pm\alpha]}{[w\pm\beta]}=
\frac{[z\pm w][\alpha\pm\beta]}
{[z\pm\beta][w\pm\beta]}\qquad(z,w,\alpha,\beta\in\bC). 
\end{equation}

It is known that 
the functional equation \eqref{eq:three-term} for $[z]$ 
implies that 
the set of zeros
$\Omega=\brm{\omega\in\bC}{[\omega]=0}$ 
form a closed discrete subgroup of the additive group 
$\bC$. 
Furthermore, 
such a function $[z]$ belongs to 
one of the following three classes, 
{\em rational}, {\em trigonometric} or {\em elliptic}, 
according to the rank of $\Omega$ (\cite{WW1927}):
\begin{equation}
\begin{array}{llll}
(0)& \mbox{rational} 
&: [z]=e(c_0z^2+c_1)\,z &(\Omega=0),\\
(1)& \mbox{trigonometric} &: [z]=e(c_0z^2+c_1)\,
\sin(\pi z/\omega_1)&(\Omega=\bZ\,\omega_1),\\
(2)& \mbox{elliptic} 
&: [z]=e(c_0z^2+c_1)\,\sigma(z|\Omega) &(\Omega=
\bZ\,\omega_1\oplus\bZ\,\omega_2), 
\end{array}
\end{equation}
where $e(z)=e^{2\pi\sqrt{-1}z}$, and 
$c_0,c_1\in\bC$.  In the elliptic case, 
$\Omega$ is generated by complex numbers 
$\omega_1,\omega_2$ which are linearly independent 
over $\bR$, and 
$\sigma(z|\Omega)$ stands for the 
Weierstrass sigma function associated with the 
period lattice $\Omega=\bZ\,\omega_1\oplus\bZ\,\omega_2$. 
In the trigonometric and elliptic cases, 
$[z]$ is quasi-periodic with respect to 
$\Omega$ in the following sense:
\begin{equation}
[z+\omega]=\ep_\omega 
e(\eta_\omega(z+\tfrac{\omega}{2}))[z]
\quad(\omega\in \Omega), 
\end{equation}
where 
$\eta_\omega\in\bC$ $(\omega\in\Omega)$ 
are constants 
such that 
$\eta_{\omega+\omega'}=\eta_\omega+\eta_{\omega'}$ 
$(\omega,\omega'\in\Omega)$, and  
$\ep_\omega=+1$ or $-1$ according as 
$\omega\in 2\Omega$ or $\omega\not\in 2\Omega$. 

\par\medskip
In what follows we fix a nonzero constant $\delta\in\bC$ 
such that $\bZ\delta\cap\Omega=\{0\}$. 
Let $D$ be a subset of $V=\bC^8$ stable under the 
translation by $P\delta$, namely $D+P\delta=D$.  
\begin{dfn}\label{dfn:ORGtaufn}\rm 
A function $\tau(x)$ defined over $D$ is called 
an {\em ORG $\tau$-function} if it satisfies the 
non-autonomous Hirota equations
\begin{equation}\label{eq:bilinHirota}
{\rm H}(a,b,c):\quad
[\ipr{b\pm c}{x}]\tau(x\pm a\delta)+
[\ipr{c\pm a}{x}]\tau(x\pm b\delta)+
[\ipr{a\pm b}{x}]\tau(x\pm c\delta)=0
\end{equation}
for all $C_3$-frames $\br{\pm a, \pm b, \pm c}$ 
relative to $P$. 
\end{dfn}

A $C_3$-frame $\br{\pm a, \pm b, \pm c}$ 
defines an octahedron in $V$ 
of which the twelve edges and the three diagonals 
are vectors in the $E_8$ lattice $P$.  
Hence, if one of the six vertices  
$\br{x\pm a\delta, x\pm b\delta, x\pm c\delta}$ 
belongs to $D$, the other five belong to $D$ as well 
by the property of a $C_3$-frame. 
Also, by Proposition \ref{prop:1B} the number of 
$C_3$-frames is
$|\cC_3|=135\cdot\binom{8}{3}=7560$.  
Hence, the equation to be satisfied by 
an ORG $\tau$-function is a system of $7560$ 
non-autonomous Hirota equations, 
which we call the {\em ORG system} of type $E_8$.
(A bilinear equation of the form \eqref{eq:bilinHirota} 
is also called a {\em Hirota-Miwa equation}.)

In Definition \ref{dfn:ORGtaufn}, as the independent 
variables of $\tau(x)$ one can take both discrete and
continuous variables.  The two extreme cases of the 
domain $D$ are: 
\begin{equation}
(1)\ \ D=v+P\delta\quad(\mbox{fully discrete}),
\qquad
(2)\ \ D=V\qquad(\mbox{fully continuous}). 
\end{equation}
There are intermediate cases where $D$ is a disjoint 
union of a countable family of affine subspaces. 
In such cases, we assume that 
$\tau(x)$ is a holomorphic (or meromorphic) function
on $D$. 
\begin{figure}
$$
\unitlength=0.9pt
\begin{picture}(240,200)(-20,0)
\put(0,100){\line(1,0){200}}
\put(100,0){\line(0,1){200}}
\put(100,100){\line(2,3){60}}
\put(100,100){\line(-2,-3){60}}
{\thicklines
\multiput(20,100)(0.5,0.5){160}{\line(1,0){1}}
\multiput(20,100)(3,-3){27}{\line(1,0){1}} %
\multiput(20,100)(0.345,-0.50){118}{\line(1,0){1}}
\multiput(20,100)(3,1.49){40}{\line(1,0){1}} %
\multiput(180,100)(-0.5,0.5){160}{\line(1,0){1}}
\multiput(180,100)(-0.5,-0.5){160}{\line(1,0){1}}
\multiput(180,100)(-0.345,0.5){118}{\line(1,0){1}}
\multiput(180,100)(-0.5,-0.245){240}{\line(1,0){1}}
\multiput(100,180)(0.5,-0.27){80}{\line(1,0){1}}
\multiput(100,180)(-0.143,-0.5){278}{\line(1,0){1}}
\multiput(100,20)(-0.5,0.27){80}{\line(1,0){1}}
\multiput(100,20)(0.85,3){46}{\line(1,0){1}} %
\put(20,35){$x+a\delta$}
\put(148,160){$x-a\delta$}
\put(185,105){$x+b\delta$}
\put(-17,105){$x-b\delta$}
\put(58,180){$x+c\delta$}
\put(108,12){$x-c\delta$}
\put(105,90){$x$}
}
\end{picture}
$$
\centerline{\small $[\ipr{b\pm c}{x}]\tau(x\pm a\delta)
+[\ipr{c\pm a}{x}]\tau(x\pm b\delta)
+[\ipr{a\pm b}{x}]\tau(x\pm c\delta)=0$}
\caption{Non-autonomous Hirota equation}
\end{figure}

\begin{prop}\label{prop:2A}\ 
For any constant $c\in\bC$, 
the entire function
\begin{equation}\label{eq:cansol}
\tau(x)=[\tfrac{1}{2\delta}\ipr{x}{x}+c]\qquad(x\in V)
\end{equation}
is an ORG $\tau$-function on $V$.
\end{prop}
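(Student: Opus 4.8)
The plan is to verify the Hirota equation $\mathrm{H}(a,b,c)$ directly for the candidate function $\tau(x)=[\tfrac{1}{2\delta}\ipr{x}{x}+c]$, reducing it to the three-term relation \eqref{eq:three-term}. First I would compute the six shifted values: for a $C_3$-frame $\br{\pm a,\pm b,\pm c}$, using $\ipr{a}{a}=\ipr{b}{b}=\ipr{c}{c}=1$ we get
\begin{equation*}
\tfrac{1}{2\delta}\ipr{x\pm a\delta}{x\pm a\delta}+c
=\tfrac{1}{2\delta}\ipr{x}{x}+c\pm\ipr{a}{x}+\tfrac{\delta}{2},
\end{equation*}
and similarly for $b$ and $c$. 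Hence, writing $z=\tfrac{1}{2\delta}\ipr{x}{x}+c+\tfrac{\delta}{2}$, $\alpha=\ipr{a}{x}$, $\beta=\ipr{b}{x}$, $\gamma=\ipr{c}{x}$, we have $\tau(x\pm a\delta)=[z\pm\alpha]$ and likewise for $b,c$ (recall the double-sign abbreviation means the product over the two signs, so $\tau(x+a\delta)\tau(x-a\delta)=[z+\alpha][z-\alpha]=[z\pm\alpha]$, which is exactly the pattern appearing in \eqref{eq:bilinHirota}).

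Next I would rewrite the coefficients. Since $\ipr{a}{b}=0$, we have $\ipr{b\pm c}{x}=\ipr{b}{x}\pm\ipr{c}{x}$, i.e. $[\ipr{b\pm c}{x}]=[\beta\pm\gamma]=[\beta+\gamma][\beta-\gamma]$, and cyclically. Substituting everything, the left-hand side of $\mathrm{H}(a,b,c)$ becomes precisely
\begin{equation*}
[\beta\pm\gamma][z\pm\alpha]+[\gamma\pm\alpha][z\pm\beta]+[\alpha\pm\beta][z\pm\gamma],
\end{equation*}
which vanishes identically by the three-term relation \eqref{eq:three-term}. Since this holds for every $C_3$-frame relative to $P$ and $V+P\delta=V$ trivially, $\tau$ is an ORG $\tau$-function on $V$; it is entire because $[z]$ is entire and $x\mapsto\tfrac{1}{2\delta}\ipr{x}{x}+c$ is a polynomial.

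There is essentially no obstacle here: the only thing to be careful about is the bookkeeping of the double-sign conventions — matching the product $[z+\alpha][z-\alpha]$ coming from the two $\tau$-shifts against the abbreviation $[z\pm\alpha]$ in \eqref{eq:three-term}, and matching the factored coefficient $[\beta+\gamma][\beta-\gamma]$ against $[\beta\pm\gamma]$ — together with the constant shift $\tfrac{\delta}{2}$ in $z$, which is harmless since \eqref{eq:three-term} holds for all $z\in\bC$. One does not even need the full strength of the $C_3$-frame axioms beyond the orthonormality relations $\ipr{a_i}{a_j}=\delta_{ij}$; the lattice condition (2) in the definition of a $C_3$-frame plays no role in this particular computation (it is what guarantees the domain $D$ is well-behaved in general, but $D=V$ here).
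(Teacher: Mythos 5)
Your proposal is correct and follows exactly the paper's own argument: compute $\tau(x\pm a\delta)=[z\pm\alpha]$ with $z=\tfrac{1}{2\delta}\ipr{x}{x}+\tfrac{\delta}{2}+c$ and $\alpha=\ipr{a}{x}$ (and similarly for $b,c$), then observe that ${\rm H}(a,b,c)$ reduces to the three-term relation \eqref{eq:three-term}. The extra bookkeeping remarks on the double-sign conventions and the unused lattice condition are accurate but add nothing beyond what the paper's two-line proof already contains.
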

\begin{proof}{Proof}
Noting that 
$\tau(x\pm a\delta)=
[\tfrac{1}{2\delta}\ipr{x}{x}+\tfrac{\delta}{2}+c
\pm\ipr{a}{x}]$, set
\begin{equation}
z=\tfrac{1}{2\delta}\ipr{x}{x}+\tfrac{\delta}{2}+c,
\quad
\alpha=\ipr{a}{x},\quad
\beta=\ipr{b}{x},\quad
\gamma=\ipr{c}{x}.
\end{equation}
Then the Hirota equation ${\rm H}(a,b,c)$ 
reduces to the functional equation \eqref{eq:three-term}. 
\end{proof}
The ORG $\tau$-function \eqref{eq:cansol} can be regarded 
as the {\em canonical solution} of the ORG system. 
We give below some remarks 
on transformations of an ORG $\tau$-function. 
\begin{thm}\label{thm:2B}\ 
Let $D$ be a subset of $V$ with $D+P\delta=D$, and 
$\tau(x)$ an ORG $\tau$-function on $D$. 
\newline
$(1)$ $($Multiplication by an exponential function\,$)$\ 
For any constants $k,c\in\bC$, 
any vector $v\in V$ and $\ep=\pm1$, the function
\begin{equation}
\widetilde{\tau}(x)=e\big(k\ipr{x}{x}+\ipr{v}{x}+c\big)\tau(\ep x)
\qquad(x\in \ep D)
\end{equation}
is an ORG $\tau$-function on $\ep D$. 
\newline
$(2)$ $($Transformation by $W(E_8)$\,$)$\ 
For any $w\in W(E_8)$, the function $w.\tau$ defined by 
\begin{equation}
(w.\tau)(x)=\tau(w^{-1}.x)\qquad (x\in w.D)
\end{equation}
is an ORG $\tau$-function on $w.D$. 
\newline
$(3)$ $($Translation by a period\,$)$\ 
For any period $\omega\in\Omega$ and any $v\in P$, 
the function $\widetilde{\tau}$ defined by 
\begin{equation}
\begin{split}
&\widetilde{\tau}(x)=e(S(x;v,\omega))\tau(x-v\omega)
\qquad(x\in D+v\omega),
\\
&S(x;v,\omega)=\tfrac{\eta_\omega}
{2\delta^2}\ipr{v}{x}\ipr{x}{x-v\omega}, 
\end{split}
\end{equation}
is an ORG $\tau$-function on $D+v\omega$. 
\end{thm}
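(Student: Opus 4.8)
The plan is to handle all three parts by one mechanism. Each transformation $\tau\mapsto\widetilde\tau$ multiplies the six vertex values $\tau(x\pm a\delta)$, $\tau(x\pm b\delta)$, $\tau(x\pm c\delta)$ occurring in ${\rm H}(a,b,c)$ by exponential factors, and — after possibly also rewriting the three coefficients $[\ipr{b\pm c}{x}]$, $[\ipr{c\pm a}{x}]$, $[\ipr{a\pm b}{x}]$ — the resulting scalar multipliers will turn out to be \emph{independent of which of the three terms one looks at}. Such a common factor can be pulled out of the whole Hirota relation, leaving exactly ${\rm H}(a,b,c)$ for $\tau$ (possibly after a linear change of the argument), which holds by assumption. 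So in each case I would fix a $C_3$-frame $\br{\pm a,\pm b,\pm c}$, expand $\widetilde\tau(x\pm a\delta)$, $\widetilde\tau(x\pm b\delta)$, $\widetilde\tau(x\pm c\delta)$ in terms of values of $\tau$, collect all scalar factors that arise, and verify term-independence.

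For $(1)$ with $\ep=1$, since $\ipr{a}{a}=1$ one has $\ipr{x\pm a\delta}{x\pm a\delta}=\ipr{x}{x}\pm 2\delta\ipr{a}{x}+\delta^2$, so the product of the two exponential prefactors of $\tau(x+a\delta)$ and $\tau(x-a\delta)$ equals $e\big(2k\ipr{x}{x}+2k\delta^2+2\ipr{v}{x}+2c\big)$, with no $a$-dependence; the same value appears for the $b$- and $c$-terms, so it cancels and ${\rm H}(a,b,c)$ for $\widetilde\tau$ reduces to ${\rm H}(a,b,c)$ for $\tau$. For $\ep=-1$ I would first observe that $x\mapsto\tau(-x)$ is again an ORG $\tau$-function on $-D$: since $[z]$ is odd, $[\ipr{b\pm c}{-x}]=[\ipr{b\pm c}{x}]$, and since $x\mapsto -x$ is linear, ${\rm H}(a,b,c)$ for $\tau(-x)$ at $x$ is ${\rm H}(a,b,c)$ for $\tau$ at $-x$; then compose with the $\ep=1$ case. (Alternatively, $-\mathrm{id}\in W(E_8)$, so this also follows from $(2)$.)

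For $(2)$, since $w$ is an isometry, $\ipr{b\pm c}{x}=\ipr{w^{-1}.b\pm w^{-1}.c}{w^{-1}.x}$, and since $w^{-1}$ is linear, $(w.\tau)(x\pm a\delta)=\tau\big((w^{-1}.x)\pm(w^{-1}.a)\delta\big)$, and likewise for $b$ and $c$. Hence ${\rm H}(a,b,c)$ for $w.\tau$ at $x$ is literally ${\rm H}(w^{-1}.a,\,w^{-1}.b,\,w^{-1}.c)$ for $\tau$ at $w^{-1}.x$; and because $w\in W(E_8)$ stabilizes $P$ and preserves $\ipr{\cdot}{\cdot}$, the set $\br{\pm w^{-1}.a,\pm w^{-1}.b,\pm w^{-1}.c}$ again satisfies the two conditions defining a $C_3$-frame. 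So the needed equation holds, and $w^{-1}.x\in D$ when $x\in w.D$ by construction.

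Part $(3)$ is the substantive one and the reason the precise quadratic form $S(x;v,\omega)$ appears. Put $y=x-v\omega$, so that $\widetilde\tau(x\pm a\delta)=e\big(S(x+a\delta;v,\omega)+S(x-a\delta;v,\omega)\big)\,\tau(y\pm a\delta)$. Two quadratic computations are needed. (i) A direct expansion, using $\ipr{a}{a}=1$, should give $S(x+a\delta;v,\omega)+S(x-a\delta;v,\omega)=C(x)+\eta_\omega\big(2\ipr{a}{v}\ipr{a}{x}-\omega\ipr{a}{v}^2\big)$ with $C(x)$ independent of $a$. (ii) Since $b\pm c,\,v\in P$, we have $\ipr{b\pm c}{v}\in\bZ$, so $\ipr{b\pm c}{x}$ and $\ipr{b\pm c}{y}$ differ by an element of $\Omega$; iterating $[z+\omega]=\ep_\omega e(\eta_\omega(z+\tfrac{\omega}{2}))[z]$ (and using $\eta_{m\omega}=m\eta_\omega$) rewrites $[\ipr{b\pm c}{x}]$ as $[\ipr{b\pm c}{y}]$ times a sign $\ep_\omega^{\ipr{2b}{v}}$ and an explicit exponential factor. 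Combining (i) and (ii), the scalar multiplying the $a$-term becomes $\ep_\omega^{\ipr{2b}{v}}\,e(C(x))\,e\big(\eta_\omega Q\big)$ with $Q=2\big(\ipr{a}{v}\ipr{a}{x}+\ipr{b}{v}\ipr{b}{x}+\ipr{c}{v}\ipr{c}{x}\big)-\omega\big(\ipr{a}{v}^2+\ipr{b}{v}^2+\ipr{c}{v}^2\big)$, which is symmetric in $a,b,c$; and the sign obeys $\ep_\omega^{\ipr{2b}{v}}=\ep_\omega^{\ipr{2c}{v}}=\ep_\omega^{\ipr{2a}{v}}$ because $a-b,\,b-c\in P$ forces $\ipr{2a-2b}{v}\in2\bZ$. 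So the whole scalar multiplier is identical for the $a$-, $b$- and $c$-terms, it factors out of the Hirota relation for $\widetilde\tau$, and what remains is exactly ${\rm H}(a,b,c)$ for $\tau$ at $y=x-v\omega$. The hard part will be precisely the two quadratic bookkeeping steps in (i) and (ii) — expanding $S(x\pm a\delta;v,\omega)$ and keeping track of the exponent produced by iterated quasi-periodicity, including the $\ep_\omega$-signs — and confirming that their sum is the symmetric expression $Q$; everything else is formal, the point being that $S(x;v,\omega)$ is engineered to be exactly that correction.
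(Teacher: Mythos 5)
Your proposal is correct, and for part (3) it follows the same overall mechanism as the paper: compute $S(x+a\delta;v,\omega)+S(x-a\delta;v,\omega)$, convert $[\ipr{b\pm c}{x}]$ to $[\ipr{b\pm c}{y}]$ by quasi-periodicity, observe that the exponential parts combine into the symmetric expression $e\big(2\eta_\omega\ipr{\ipr{a}{v}a+\ipr{b}{v}b+\ipr{c}{v}c}{x-v\tfrac{\omega}{2}}\big)$ times an $(a,b,c)$-independent factor, and then reduce everything to the equality of the three signs $\ep_{\ipr{b+c}{v}\omega}\ep_{\ipr{b-c}{v}\omega}=\ep_{\ipr{c+a}{v}\omega}\ep_{\ipr{c-a}{v}\omega}=\ep_{\ipr{a+b}{v}\omega}\ep_{\ipr{a-b}{v}\omega}$; your target expressions in steps (i) and (ii) match the paper's intermediate formulas exactly. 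The one place you genuinely diverge is the sign identity itself: the paper disposes of it by invoking the transitivity of $W(E_8)$ on $\cC_3$ to reduce to the frame $\br{\pm v_0,\pm v_1,\pm v_2}$ and then checking the two cases $v\in\bZ^8$ and $v\in\phi+\bZ^8$ separately, whereas you note that each product equals $\ep_\omega^{2\ipr{a}{v}}$ (resp.\ $\ep_\omega^{2\ipr{b}{v}}$, $\ep_\omega^{2\ipr{c}{v}}$) and that these agree because $a-b,\,b-c\in P$ and $v\in P$ force $2\ipr{a}{v}\equiv 2\ipr{b}{v}\equiv 2\ipr{c}{v}\pmod 2$. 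Your argument is shorter and more intrinsic (it uses only the integrality of the pairing on $P$, not the explicit coordinate description of the lattice or the orbit structure of frames), at the cost of having to justify $\ep_{m\omega}=\ep_\omega^{m}$; the paper's reduction is heavier but reuses machinery it has already set up. Parts (1) and (2), which the paper declares straightforward, are handled correctly, including the observation that the $\ep=-1$ case follows from the oddness of $[z]$ (or from $-{\rm id}\in W(E_8)$).
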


\begin{proof}{Proof} 
Since Statements (1) and (2) are straightforward, 
we give a proof of (3) only.   Set $y=x-v\omega\in D$ 
so that
\begin{equation}
[\ipr{b\pm c}{x}]\widetilde{\tau}(x\pm a\delta)=
[\ipr{b\pm c}{y+v\omega}]
e(S(x\pm a\delta;v,\omega))\tau(y\pm a\delta).
\end{equation}
Since
\begin{equation}
\begin{split}
[\ipr{b+c}{y+v\omega}]
&=[\ipr{b+c}{y}+\ipr{b+c}{v}\omega]
\\
&=\ep_{\ipr{b+c}{v}\omega}e(\eta_{\omega}
\ipr{b+c}{v}\ipr{b+c}{y+v\tfrac{\omega}{2}})[\ipr{b+c}{y}], 
\end{split}
\end{equation}
we have
\begin{equation}
\begin{split}
&[\ipr{b\pm c}{y+v\omega}]
\\
&=\ep_{\ipr{b+c}{v}\omega}\ep_{\ipr{b-c}{v}\omega}
[\ipr{b\pm c}{y}]
e\left(2\eta_{\omega}
\ipr{\ipr{b}{v}b+\ipr{c}{v}c}{y+v\tfrac{\omega}{2}}
\right)
\\
&=\ep_{\ipr{b+c}{v}\omega}\ep_{\ipr{b-c}{v}\omega}
[\ipr{b\pm c}{y}]
e\left(2\eta_{\omega}
\ipr{\ipr{b}{v}b+\ipr{c}{v}c}{x-v\tfrac{\omega}{2}}
\right).
\end{split}
\end{equation}
On the other hand, 
\begin{equation}
S(x+a\delta;v,w)+S(x-a\delta;v,w)
=
\tfrac{\eta_\omega}{\delta^2}\ipr{v}{x}\big(\ipr{v}{x-v\omega}+\delta^2\big)
+2\eta_\omega\ipr{a}{v}\ipr{a}{x-v\tfrac{\omega}{2}}. 
\end{equation}
This implies
\begin{equation}
\begin{split}
&[\ipr{b\pm c}{x}]\widetilde{\tau}(x\pm a\delta)
\\
&=
\ep_{\ipr{b+c}{v}\omega}\ep_{\ipr{b-c}{v}\omega}
[\ipr{b\pm c}{y}]\tau(y\pm a\delta)
\\
&\quad\cdot
e\big(
\tfrac{\eta_{\omega}}{\delta^2}
\ipr{v}{x}(\ipr{v}{x-v\omega}+\delta^2)
\big)
e\big(2\eta_{\omega}
\ipr{\ipr{a}{v}a+\ipr{b}{v}b+\ipr{c}{v}c}
{x-v\tfrac{\omega}{2}}\big).
\end{split}
\end{equation}
Hence, validity 
of the Hirota equation for $\widetilde{\tau}(x)$ reduces 
to proving
\begin{equation}
\ep_{\ipr{b+c}{v}\omega}\ep_{\ipr{b-c}{v}\omega}
=
\ep_{\ipr{c+a}{v}\omega}\ep_{\ipr{c-a}{v}\omega}
=
\ep_{\ipr{a+b}{v}\omega}\ep_{\ipr{a-b}{v}\omega}. 
\end{equation}
Since this holds trivially for $\omega\in 2\Omega$, 
we assume $\omega\notin 2\Omega$. 
In view of the transitive action of 
$W(E_8)$ on $\cC_3$, we may assume 
$\br{\pm a,\pm b,\pm c}=\br{\pm v_0,\pm v_1,\pm v_2}$. 
Then, for distinct $i,j\in\br{0,1,2}$, we have 
\begin{equation}
\begin{array}{cl}
v\in\bZ^8\quad&\Longrightarrow\quad 
\ipr{v_i+v_j}{v}\equiv \ipr{v_i-v_j}{v}\ \ \mbox{mod}\ 2,
\\[4pt]
v\in\phi+\bZ^8\quad&\Longrightarrow\quad 
\ipr{v_i+v_j}{v}\not\equiv \ipr{v_i-v_j}{v}\ \ \mbox{mod}\ 2.
\end{array}
\end{equation}
Since 
\begin{equation}
\ep_{k\omega}\ep_{l\omega}=
\left\{\begin{array}{ll}
+1 & (k\equiv l\ \ \mbox{\rm mod}\ 2),\\[4pt]
-1 & (k\not\equiv l\ \ \mbox{\rm mod}\ 2)
\end{array}
\right.
\end{equation}
for $\omega\notin 2\Omega$, 
$\ep_{\ipr{v_i+v_j}{v}\omega}
\ep_{\ipr{v_i-v_j}{v}\omega}$ takes the value 
$+1$ or $-1$ according as $v\in P$ belongs 
to $\bZ^8$ or $\phi+\bZ^8$, 
regardless of the choice of the pair $i,j$. 
This completes the proof of (3). 
\end{proof}

We remark that the composition of two translations 
of (3) by $a\,\omega$ and by $b\,\omega$ 
for $a,b\in P$ and $\omega\in\Omega$ 
results essentially in the same transformation as the 
translation by $(a+b)\omega$.  In fact we have 
\begin{equation}
\begin{split}
&e(S(x;b\,\omega)e(S(x-b\,\omega;a,\omega))\tau(x-(a+b)\omega)
\\
&=
e(k\ipr{x}{x}+\ipr{v}{x}+c)
S(x;a+b,\omega)\tau(x-(a+b)\omega)
\end{split}
\end{equation}
for some $k,c\in\bC$ and $v\in V$.  

\section{\boldmath $E_8$ $\tau$-function as an 
infinite chain of $E_7$ $\tau$-functions}
\label{sec:3}

Recall that the root lattice $Q(E_7)$ of type $E_7$ 
is the orthogonal complement of $\phi$ in $P=Q(E_8)$. 
In what follows, we denote by 
\begin{equation}
H_\kappa=\brm{x\in V}{\ipr{\phi}{x}=\kappa}\quad
(\kappa\in\bC)
\end{equation}
the hyperplanes in $V$ defined as the level sets 
of $\varphi=\ipr{\phi}{\cdot}$.  
Fixing a constant $c\in\bC$, 
we now consider the case where 
the domain $D$ of an ORG $\tau$-function is a disjoint 
union of parallel hyperplanes
\begin{equation}
D_c=\bigsqcup_{n\in\bZ}H_{c+n\delta}. 
\end{equation}
Note that each component 
$H_{c+n\delta}$ ($n\in\bZ$) is 
invariant under the action of $W(E_7)$ and 
the translation by $Q(E_7)\delta$, and 
that the whole set $D$ is invariant under 
the translation by $Q(E_8)\delta$. 
In this situation, 
we regard a function 
$\tau(x)$ on $D_c$ as an infinite family of functions 
$\tau^{(n)}(x)$ on $H_{c+n\delta}$ $(n\in \bZ)$ 
defined by restriction as 
$\tau^{(n)}=\tau\big|_{H_{c+n\delta}}$ for $n\in\bZ$. 
In order to investigate the system of Hirota equations
for $\tau^{(n)}(x)$ ($n\in\bZ$), 
we classify them under the action of 
the Weyl group $W(E_7)$. 

For a $C_l$-frame $A=\br{\pm a_0,\ldots,\pm a_{l-1}}$ 
given, we consider the {\em multiset}
\begin{equation}
\varphi(A)=\br{\pm \varphi(a_0),\ldots,\pm \varphi(a_{l-1})},
\end{equation}
where $\varphi(v)=\ipr{\phi}{v}$ ($v\in V$).  
As to the three $C_8$-frames of 
Example \ref{exm:C8frames}, we have
\begin{equation}
\varphi(A_0)=\br{(\pm\hf)^8},
\quad
\varphi(A_1)=\varphi(A_2)=\br{(\pm1)^2, 0^{12}}. 
\end{equation}
Here the symbol $c^n$ (resp.\ $(\pm c)^n$)
indicates that $c$ appears (resp.\ both $+c$ and $-c$ appear)
with multiplicity $n$ in the multiset. 
We say that a $C_8$-frame is {\em of type $\I$} if 
$\varphi(A)=\br{(\pm\hf)^8}$, and {\em of type $\II$}
if $\varphi(A)=\br{(\pm1)^2, 0^{12}}$.  
\begin{equation}
\begin{picture}(100,80)(10,-10)
\unitlength=1.2pt
\multiput(50,30)(0.10,1){19}{\line(0,1){1}}
\multiput(50,30)(0.30,1){24}{\line(0,1){1}}
\multiput(50,30)(0.50,1){20}{\line(0,1){1}}
\multiput(50,30)(0.70,1){22}{\line(0,1){1}}
\multiput(50,30)(-0.08,1){24}{\line(0,1){1}}
\multiput(50,30)(-0.30,1){19}{\line(0,1){1}}
\multiput(50,30)(-0.46,1){23}{\line(0,1){1}}
\multiput(50,30)(-0.74,1){21}{\line(0,1){1}}
\multiput(50,30)(0.10,-1){19}{\line(0,1){1}}
\multiput(50,30)(0.30,-1){24}{\line(0,1){1}}
\multiput(50,30)(0.50,-1){20}{\line(0,1){1}}
\multiput(50,30)(0.70,-1){22}{\line(0,1){1}}
\multiput(50,30)(-0.08,-1){24}{\line(0,1){1}}
\multiput(50,30)(-0.30,-1){19}{\line(0,1){1}}
\multiput(50,30)(-0.46,-1){23}{\line(0,1){1}}
\multiput(50,30)(-0.74,-1){21}{\line(0,1){1}}
\put(18,-8){\small $C_8$-frame of type I}
\put(120,20){\vector(0,1){35}}
\put(122,37){$\varphi$}
\end{picture}
\qquad\qquad\qquad
\begin{picture}(100,80)(10,-10)
\unitlength=1.2pt
\multiput(50,30)(0.16,1){30}{\line(0,1){1}}
\multiput(50,30)(-0.16,1){30}{\line(0,1){1}}
\multiput(50,30)(0.16,-1){30}{\line(0,1){1}}
\multiput(50,30)(-0.16,-1){30}{\line(0,1){1}}
\multiput(50,30)(0.5,1){15}{\line(0,1){1}}
\multiput(50,30)(0.8,0.6){22}{\line(1,0){1}}
\multiput(50,30)(1,0.2){26}{\line(1,0){1}}
\multiput(50,30)(0.5,-1){15}{\line(0,1){1}}
\multiput(50,30)(0.8,-0.6){22}{\line(1,0){1}}
\multiput(50,30)(1,-0.2){26}{\line(1,0){1}}
\multiput(50,30)(-0.5,1){15}{\line(0,1){1}}
\multiput(50,30)(-0.8,0.6){22}{\line(1,0){1}}
\multiput(50,30)(-1,0.2){26}{\line(1,0){1}}
\multiput(50,30)(-0.5,-1){15}{\line(0,1){1}}
\multiput(50,30)(-0.8,-0.6){22}{\line(1,0){1}}
\multiput(50,30)(-1,-0.2){26}{\line(1,0){1}}
\put(17,-8){\small $C_8$-frame of type $\II$}
\end{picture}
\end{equation}

\begin{prop}\label{prop:3A}
Any $C_8$-frame is either of type $\I$ or of type $\II$. 
Furthermore, these two types give the decomposition
of the set $\cC_8$ of all $C_8$-frames into 
$W(E_7)$-orbits\,$:$
\begin{equation}
\begin{split}
&\cC_8=\cC_{8,\sI}\sqcup \cC_{8,\sII},
\qquad|\cC_{8,\sI}|=72,\quad|\cC_{8,\sII}|=63; 
\\
&\cC_{8,\sI}=W(E_7)A_0,\quad
\cC_{8,\sII}=W(E_7)A_1=W(E_7)A_2. 
\end{split}
\end{equation}
\end{prop}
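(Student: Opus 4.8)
The plan is to read off the two types directly from the defining conditions of a $C_8$-frame, and then to identify each type with a single $W(E_7)$-orbit using Proposition \ref{prop:1A}\,(1); the crucial tool is that a vector of $(\hf P)_{N=1}$ determines its $C_8$-frame.

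\textbf{The two types.} First I would show $\varphi(A)$ can only be $\br{(\pm\hf)^8}$ or $\br{(\pm1)^2,0^{12}}$. Let $A=\br{\pm a_0,\ldots,\pm a_7}$ be a $C_8$-frame. Condition $(1)$ says $a_0,\ldots,a_7$ is an orthonormal basis of $V$, so $\phi=\sum_{i}\varphi(a_i)\,a_i$ and $2=N(\phi)=\sum_{i=0}^{7}\varphi(a_i)^2$; in particular $\varphi(a_i)^2\le 2$ for each $i$. Since $2a_i\in P$, we have $2\varphi(a_i)=\varphi(2a_i)\in\bZ$, and together with $\varphi(a_i)^2\le 2$ this gives $\varphi(a_i)\in\br{0,\pm\hf,\pm1}$, hence $\varphi(a_i)^2\in\br{0,\tfrac14,1}$. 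Moreover $a_i\pm a_j\in P$ for $i\ne j$ forces $\varphi(a_i)\pm\varphi(a_j)\in\bZ$, so $\varphi(a_i)\equiv\varphi(a_j)\pmod{\bZ}$ for all $i,j$: either every $\varphi(a_i)$ lies in $\bZ$ or every $\varphi(a_i)$ lies in $\hf+\bZ$. In the first case $\varphi(a_i)\in\br{0,\pm1}$, and $\sum\varphi(a_i)^2=2$ forces exactly two of them to equal $\pm1$ and the other six to vanish, so $\varphi(A)=\br{(\pm1)^2,0^{12}}$; in the second case $\varphi(a_i)\in\br{\pm\hf}$ for all $i$, so $\varphi(A)=\br{(\pm\hf)^8}$. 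Since $W(E_7)$ fixes $\phi$, the multiset $\varphi(A)$ is a $W(E_7)$-invariant, so $\cC_8=\cC_{8,\sI}\sqcup\cC_{8,\sII}$ with both parts $W(E_7)$-stable; the values $\varphi(A_0)=\br{(\pm\hf)^8}$ and $\varphi(A_1)=\varphi(A_2)=\br{(\pm1)^2,0^{12}}$ recorded above place $A_0$ in $\cC_{8,\sI}$ and $A_1,A_2$ in $\cC_{8,\sII}$.

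\textbf{The orbit $\cC_{8,\sII}$.} Next I would build a $W(E_7)$-equivariant bijection $\cC_{8,\sII}\isoto\Delta(E_7)/\br{\pm1}$. Given $A\in\cC_{8,\sII}$, relabel and change signs so that $\varphi(a_0)=\varphi(a_7)=1$ and $\varphi(a_j)=0$ $(1\le j\le 6)$; then $\phi=a_0+a_7$, and $\beta:=a_0-a_7$ lies in $P$ with $N(\beta)=2$, $\varphi(\beta)=0$, i.e.\ $\beta\in\Delta(E_7)$, and $a_0=\hf(\phi+\beta)$. Conversely, for $\beta\in\Delta(E_7)$ the vector $\hf(\phi+\beta)$ belongs to $(\hf P)_{N=1}$, since $N(\hf(\phi+\beta))=\tfrac14\big(N(\phi)+2\varphi(\beta)+N(\beta)\big)=1$; by Proposition \ref{prop:1A}\,(1) it lies in a unique $C_8$-frame $A(\beta)$, which is of type $\II$ because it has a member of $\varphi$-value $1$. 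I would then check $A(\beta)=A(-\beta)$, that the two members of $A(\beta)$ with $\varphi=1$ are precisely $\hf(\phi\pm\beta)$ (so $\pm\beta$ is recovered from $A(\beta)$), and that $A=A(a_0-a_7)$ for the $A$ above; hence $\beta\mapsto A(\beta)$ descends to a bijection $\Delta(E_7)/\br{\pm1}\isoto\cC_{8,\sII}$, equivariant since $w.A(\beta)=A(w\beta)$ for $w\in W(E_7)$. Therefore $|\cC_{8,\sII}|=\hf|\Delta(E_7)|=63$, and as $W(E_7)$ is transitive on $\Delta(E_7)$, the set $\cC_{8,\sII}$ is a single $W(E_7)$-orbit; in particular $\cC_{8,\sII}=W(E_7)A_1=W(E_7)A_2$.

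\textbf{The orbit $\cC_{8,\sI}$.} By the previous step and Proposition \ref{prop:1A}\,(3), $|\cC_{8,\sI}|=135-63=72$. To see that $\cC_{8,\sI}=W(E_7)A_0$, I would compute the stabilizer: an element of $W(E_7)$ fixing the set $\br{\pm v_0,\ldots,\pm v_7}$ acts on $V$ as a signed permutation of $v_0,\ldots,v_7$ and fixes $\phi=\hf(v_0+\cdots+v_7)$, which forces all signs to be $+$, so $\mathrm{Stab}_{W(E_7)}(A_0)=\mathfrak{S}_8$; hence $|W(E_7)A_0|=|W(E_7)|/8!=2903040/40320=72=|\cC_{8,\sI}|$, and since $W(E_7)A_0\subseteq\cC_{8,\sI}$ by the first step, equality holds. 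The arithmetic in the first step and the stabilizer count here are routine; the step that takes genuine care is the middle one — identifying a type-$\II$ frame with a root of $E_7$ through Proposition \ref{prop:1A}\,(1) — which is what simultaneously pins down $|\cC_{8,\sII}|=63$ and the transitivity of $W(E_7)$ on $\cC_{8,\sII}$.
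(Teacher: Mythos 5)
Your proof is correct, and it reaches the conclusion by a genuinely different route from the paper. The paper first decomposes $P_{N=4}$ (equivalently $(\hf P)_{N=1}$) into five $W(E_7)$-orbits indexed by the $\varphi$-level, with explicit representatives, stabilizers and cardinalities $126,576,756,576,126$; it then reads off both the dichotomy and the orbit structure of $\cC_8$ from the fact that each vector of $(\hf P)_{N=1}$ lies in a unique $C_8$-frame, checking which vector-orbits meet $A_0$, $A_1$, $A_2$, and obtaining the counts as $1152/16=72$ and $1008/16=63$. You instead get the dichotomy by pure arithmetic on $\varphi(a_0),\ldots,\varphi(a_7)$ from $\sum_i\varphi(a_i)^2=N(\phi)=2$ together with the integrality constraints, establish $|\cC_{8,\sII}|=63$ and the transitivity on $\cC_{8,\sII}$ in one stroke via the equivariant bijection $\cC_{8,\sII}\isoto\Delta(E_7)/\br{\pm1}$, $A\mapsto\pm(a_0-a_7)$, and finish type $\I$ by complementary counting against $|\cC_8|=135$ plus the stabilizer computation $\mathrm{Stab}_{W(E_7)}(A_0)=\mathfrak{S}_8$. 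What the paper's route buys is the orbit table for $P_{N=4}$ itself, which it reuses immediately afterwards (e.g.\ for the classification of $C_3$-frames in Proposition \ref{prop:3B}); what your route buys is a more self-contained argument that also proves, rather than merely checks on examples, the identity $a_0+a_7=\phi$ for type-$\II$ frames (and $\hf(a_0+\cdots+a_7)=\phi$ for type $\I$), since $\phi=\sum_i\varphi(a_i)a_i$ in any orthonormal frame.
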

In order to analyze the $W(E_7)$-orbits in $\cC_8$, 
we first decompose $P_{N=4}$ into $W(E_7)$-orbits.  
As a result, 
$P_{N=4}$ decomposes 
into the form
\begin{equation}
P_{N=4}=
P_{N=4,\varphi=2}\sqcup
P_{N=4,\varphi=1}\sqcup
P_{N=4,\varphi=0}\sqcup
P_{N=4,\varphi=-1}\sqcup
P_{N=4,\varphi=-2},
\end{equation}
and each level set of $\varphi$ forms a 
single $W(E_7)$-orbit. 
The five $W(E_7)$-orbits are described as follows.  
\begin{equation}
\arraycolsep=4pt
\renewcommand{\arraystretch}{1.2}
\begin{array}{|c||c|c|c|c|c|}
\hline
\varphi & 2 & 1 & 0 & -1 & -2\\
\hline\hline
\mbox{representative}&
\phi-v_0+v_1 & \phi-2v_0 & \phi-2v_0-v_6-v_7 &
-2v_0 & -\phi-v_0+v_1\\
\hline
\mbox{stabilizer}&
W(D_6)&W(A_6)&W(D_5\times A_1)&W(A_6)&W(D_6)
\\
\hline
\mbox{cardinality}&
126 & 576 & 756 & 576 & 126 
\\
\hline
\end{array}
\end{equation}
The ``representative'' indicates a unique vector 
$v$ in the orbit such that 
$\ipr{\alpha_j}{v}\ge0$ ($j=0,1,\ldots,6$).  
According to this decomposition of $P_{N=4}$, 
$(\hf P)_{N=1}=\hf(P_{N=4})$ decomposes into the 
five $W(E_7)$-orbits with $\varphi=1,\hf,0,-\hf,-1$.  
Proposition \ref{prop:3A} follows from the fact 
that 
$\hf\phi-v_0$ and $v_0$ belong to 
$C_8$-frames 
of type $\I$, 
and 
$\hf(\phi-v_0+v_1)$, 
$\hf(\phi-v_6-v_7)-v_0$, 
and 
$\hf(-\phi-v_0+v_1)$
to those of type $\II$. 
Note that, among the 63 $C_8$-frames of type $\II$, 
35 are obtained from $A_1$, 
and 28 from $A_2$ of Example \ref{exm:C8frames} 
by the action of the symmetric group $\mathfrak{S}_8
\subset W(E_7)$.  

We remark that in any 
$C_8$-frame $\br{\pm a_0,\pm a_1,\ldots,\pm a_7}$ with 
\begin{equation}
(\I):\quad
\ipr{\phi}{a_j}=\hf\quad(j=0,1,\ldots,7), 
\end{equation}
we have 
$\hf(a_0+a_1+\cdots+a_7)=\phi$. 
Also, in any $C_8$-frame $\br{\pm a_0,\pm a_1,\ldots,\pm a_7}$ with 
\begin{equation}
(\II):\quad
\ipr{\phi}{a_0}=\ipr{\phi}{a_7}=1,\quad
\ipr{\phi}{a_j}=0\quad(j=1,\ldots,6), 
\end{equation}
we have $a_0+a_7=\phi$.  
Since these statements are $W(E_7)$-invariant, 
by Proposition \ref{prop:3A} 
we have 
only to check the cases of $A_0$ and $A_1$
of Example \ref{exm:C8frames}, respectively. 

By Proposition \ref{prop:1B}, each $C_3$-frame is 
contained in a unique $C_8$-frame.  Hence, by 
Proposition \ref{prop:3A} we obtain the following 
classification of $C_3$-frames. 
\begin{prop}\label{prop:3B}
The set $\cC_3$ of all $C_3$-frames decomposes 
into four $W(E_7)$-orbits\,$:$
\begin{equation}
\cC_{3}=\cC_{3,\I}
\sqcup \cC_{3,\II_0}
\sqcup \cC_{3,\II_1}
\sqcup \cC_{3,\II_2}. 
\end{equation}
These four $W(E_7)$-orbits are characterized as follows. 
\begin{equation}
\renewcommand{\arraystretch}{1.2}
\begin{array}{|c||c|c|c|c|}
\hline
\mbox{\rm type} & \I & \II_0 & \II_1 & \II_2\\
\hline\hline
\varphi & (\pm\hf)^3 & 0^6 & (\pm1)\,0^4 & (\pm1)^2\,0^2\\
\hline
\mbox{\rm cardinality}&
56\cdot 72 & 20\cdot 63 & 30\cdot 63 & 6\cdot 63
\\
\hline
\end{array}
\end{equation}
\end{prop}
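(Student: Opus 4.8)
The plan is to reduce the classification of $C_3$-frames to the already-established classification of $C_8$-frames in Proposition \ref{prop:3A}. The starting observation is Proposition \ref{prop:1B}(2): every $C_3$-frame $B=\br{\pm a,\pm b,\pm c}$ is contained in a \emph{unique} $C_8$-frame $A$. Moreover, the inclusion $B\subseteq A$ is clearly $W(E_7)$-equivariant, so the $W(E_7)$-orbit of $B$ determines the $W(E_7)$-orbit of $A$, and by Proposition \ref{prop:3A} the latter is either $\cC_{8,\sI}$ or $\cC_{8,\sII}$. The multiset $\varphi(B)=\br{\pm\varphi(a),\pm\varphi(b),\pm\varphi(c)}$ is a sub-multiset (up to signs) of $\varphi(A)$, which is $\br{(\pm\hf)^8}$ in type $\I$ and $\br{(\pm1)^2,0^{12}}$ in type $\II$. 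Hence if $A$ is of type $\I$ then $\varphi(B)=\br{(\pm\hf)^3}$, the single possibility labelled $\I$; and if $A$ is of type $\II$ then, writing $A=\br{\pm a_0,\ldots,\pm a_7}$ with $\varphi(a_0)=\varphi(a_7)=1$ and $\varphi(a_j)=0$ for $1\le j\le 6$, the three chosen indices of $B$ lie in $\br{0,\ldots,7}$ and the number of them belonging to $\br{0,7}$ is $0$, $1$, or $2$, giving $\varphi(B)=0^6$, $(\pm1)\,0^4$, or $(\pm1)^2\,0^2$ respectively, i.e.\ the types $\II_0,\II_1,\II_2$. This shows there are at most four $W(E_7)$-orbits, distinguished by the invariant $\varphi(B)$.

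Next I would show these four cases are genuinely distinct $W(E_7)$-orbits and are each a single orbit. Distinctness is immediate since $\varphi(B)$ (an honest $W(E_7)$-invariant, as $W(E_7)$ fixes $\phi$) takes four different values. For transitivity within each type: in type $\I$, $W(E_7)$ acts transitively on $\cC_{8,\sI}$ by Proposition \ref{prop:3A}, so it suffices to show the stabilizer $W(D_7)$ of $A_0$ in $W(E_8)$—which contains the relevant subgroup acting on $A_0$—acts transitively on the $3$-element subsets of $\br{0,1,\ldots,7}$ with the induced sign freedom already absorbed into the frame structure; concretely, the symmetric group $\mathfrak S_8\subset W(E_7)$ permutes $v_0,\ldots,v_7$ transitively on $3$-subsets, handling $A_0$. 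In type $\II$, $W(E_7)$ acts transitively on $\cC_{8,\sII}=W(E_7)A_1$, and within $A_1$ one needs transitivity on $3$-subsets of $\br{0,\ldots,7}$ having a prescribed intersection size with $\br{0,7}$; this follows because the stabilizer of $A_1$ inside $W(E_7)$ contains permutations acting as $\mathfrak S_2\times\mathfrak S_6$ on the index set $\br{0,7}\sqcup\br{1,\ldots,6}$ (e.g.\ the transposition $r_{v_0-v_7}$ swaps $a_0,a_7$ and $s_1,\ldots,s_5$ permute $a_1,\ldots,a_6$), which is transitive on $3$-subsets of each intersection-type $0,1,2$.

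Finally the cardinalities: by Proposition \ref{prop:3A} there are $72$ $C_8$-frames of type $\I$ and $63$ of type $\II$, and each $C_8$-frame $\br{\pm a_0,\ldots,\pm a_7}$ contains exactly $\binom{8}{3}=56$ $C_3$-subframes in total. Of these $56$, in a type-$\II$ frame the number with intersection size $0$ with $\br{0,7}$ is $\binom{6}{3}=20$, with size $1$ is $2\binom{6}{2}=30$, and with size $2$ is $\binom{6}{1}=6$, summing to $56$ as required. Since each $C_3$-frame sits in a unique $C_8$-frame, the orbit sizes are $56\cdot72$, $20\cdot63$, $30\cdot63$, $6\cdot63$, exactly as in the table.

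The main obstacle I anticipate is not the combinatorial bookkeeping but verifying cleanly that within a \emph{fixed} $C_8$-frame the ambient $W(E_7)$-stabilizer really does realize all the claimed permutations of the constituent rays—one must check that the relevant permutations of $\br{0,\ldots,7}$ are induced by elements of $W(E_7)$ (not merely $W(E_8)$) that preserve the chosen frame; here the explicit description $\mathfrak S_8\subset W(E_7)$ and the listed generators $r_{v_0-v_7}$, $s_1,\ldots,s_6$ acting on $A_1$ (and $v_0,\ldots,v_7$ on $A_0$) make this routine but it is the one place requiring care, and as elsewhere in this section it suffices to check it on the representatives $A_0$ and $A_1$ of Example \ref{exm:C8frames} by the $W(E_7)$-invariance of the assertions.
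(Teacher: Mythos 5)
Your proposal is correct and follows essentially the same route as the paper, which obtains Proposition \ref{prop:3B} directly from the unique containment of each $C_3$-frame in a $C_8$-frame (Proposition \ref{prop:1B}) together with the two-orbit classification of $C_8$-frames (Proposition \ref{prop:3A}); your counts $56=20+30+6$ and the resulting orbit sizes match the table. One small correction to your transitivity check: the reflection $r_{v_0-v_7}$ does \emph{not} stabilize the frame $A_1$ (it sends $a_0=\hf(v_0+v_1+v_2+v_3)$ to $\hf(v_1+v_2+v_3+v_7)$, which is not in $A_1$) --- the generators you list are the ones adapted to the representative $A_2$; for $A_1$ the swap of $a_0,a_7$ is effected by $s_0=r_{a_7-a_0}$ (as noted in Section \ref{sec:6}) and the full $\mathfrak{S}_6$ on $a_1,\ldots,a_6$ by the reflections $r_{a_i-a_j}\in W(E_7)$ for $1\le i<j\le 6$, after which your argument goes through unchanged.
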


\begin{figure}
$$
\begin{picture}(380,65)(-60,-5)
\unitlength=1.2pt
\put(35,25){\line(0,1){12}}
\put(35,25){\line(3,2){16}}
\put(35,25){\line(-3,2){16}}
\put(35,25){\line(0,-1){12}}
\put(35,25){\line(3,-2){16}}
\put(35,25){\line(-3,-2){16}}
\put(0,40){\small (\I)}
\put(105,25){\line(1,0){30}}
\put(105,25){\line(6,1){24}}
\put(105,25){\line(6,-1){24}}
\put(105,25){\line(-1,0){30}}
\put(105,25){\line(-6,1){24}}
\put(105,25){\line(-6,-1){24}}
\put(70,40){\small $(\II_0)$ }
\put(175,25){\line(0,1){22}}
\put(175,25){\line(6,1){24}}
\put(175,25){\line(6,-1){24}}
\put(175,25){\line(0,-1){22}}
\put(175,25){\line(-6,1){24}}
\put(175,25){\line(-6,-1){24}}
\put(140,40){\small $(\II_1)$ }
\put(245,25){\line(1,0){22}}
\put(245,25){\line(1,5){4.5}}
\put(245,25){\line(1,-5){4.5}}
\put(245,25){\line(-1,0){22}}
\put(245,25){\line(-1,5){4.5}}
\put(245,25){\line(-1,-5){4.5}}
\put(210,40){\small $(\II_2)$ }
\put(-33,25){\vector(0,1){24}}
\put(-50,30){\line(1,0){40}}
\put(-60,20){\line(1,0){40}}
\put(-60,20){\line(1,1){10}}
\put(-20,20){\line(1,1){10}}
\put(-45,40){$\varphi$}
\put(10,-10){\small $56\cdot 72=4032$}
\put(80,-10){\small $20\cdot 63=1260$}
\put(150,-10){\small $30\cdot 63=1890$}
\put(225,-10){\small $6\cdot 63=378$}
\end{picture}
$$
\caption{Four types of 7560 $C_3$-frames}
\end{figure}

According to the four types of $C_3$-frames, 
the Hirota equations for $\tau^{(n)}(x)$ 
are classified as follows. 
For each $C_3$-frame $\br{\pm a_0,\pm a_1,\pm a_2}$
of type $\I$ with
\begin{equation}\label{eq:C3frameI}
(\I):\quad \ipr{\phi}{a_0}=\ipr{\phi}{a_1}=
\ipr{\phi}{a_2}=\hf, 
\end{equation}
the Hirota equation $H(a_0,a_1,a_2)$ takes the form
\begin{equation}\label{eq:HirotaEqI}
\begin{split}
(\I)_{n+1/2}:\quad
&[\ipr{a_1\pm a_2}{x}]\tau^{(n)}(x\!-\!a_0\delta)
\tau^{(n+1)}(x\!+\!a_0\delta)
\\
&\mbox{}
+
[\ipr{a_2\pm a_0}{x}]\tau^{(n)}(x\!-\!a_1\delta)
\tau^{(n+1)}(x\!+\!a_1\delta)
\\
&\mbox{}
+[\ipr{a_0\pm a_1}{x}]\tau^{(n)}(x\!-\!a_2\delta)
\tau^{(n+1)}(x\!+\!a_2\delta)=0
\end{split}
\end{equation}
for $x\in H_{c+(n+1/2)\delta}$. 
This bilinear equation describes the relationship
({\em B\"acklund transformation})
between the two $\tau$-functions $\tau^{(n)}(x)$ 
and $\tau^{(n+1)}(x)$ on $H_{c+n\delta}$ and $H_{c+(n+1)\delta}$, 
respectively. 
When $\br{\pm a_0,\pm a_1,\pm a_2}$ is $C_3$-frame 
of type $\II_0$, $\II_1$, $\II_2$, 
 we choose $a_0$, $a_1$, $a_2$ so that 
\begin{equation}\label{eq:C3frameII}
\begin{split}
(\II_0):\quad &\ipr{\phi}{a_0}=\ipr{\phi}{a_1}=\ipr{\phi}{a_2}=0,
\\[2pt]
(\II_1):\quad &\ipr{\phi}{a_0}=1,\quad \ipr{\phi}{a_1}=\ipr{\phi}{a_2}=0,
\\[2pt]
(\II_2):\quad &\ipr{\phi}{a_0}=\ipr{\phi}{a_1}=1,\quad\ipr{\phi}{a_2}=0.
\end{split}
\end{equation}
Then the corresponding Hirota equations are given by
\begin{equation} \label{eq:HirotaEqII}
\begin{split}
(\II_0)_{n}:\quad
&
[\ipr{a_1\pm a_2}{x}]\tau^{(n)}(x\pm a_0\delta)+[\ipr{a_2\pm a_0}{x}]\tau^{(n)}(x\pm a_1\delta)
\\
&\quad\mbox{}
+[\ipr{a_0\pm a_1}{x}]\tau^{(n)}(x\pm a_2\delta)=0, 
\\
(\II_1)_{n}:\quad
&[\ipr{a_1\pm a_2}{x}]\tau^{(n-1)}(x-a_0\delta)\tau^{(n+1)}(x+a_0\delta)
\\
&=
[\ipr{a_0\pm a_2}{x}]\tau^{(n)}(x\pm a_1\delta)
-[\ipr{a_0\pm a_1}{x}]\tau^{(n)}(x\pm a_2\delta), 
\\
(\II_2)_{n}:\quad
&[\ipr{a_1\pm a_2}{x}]\tau^{(n-1)}(x-a_0\delta)\tau^{(n+1)}(x+a_0\delta)
\\
&\mbox{}
-[\ipr{a_0\pm a_2}{x}]\tau^{(n-1)}(x-a_1\delta)\tau^{(n+1)}(x+a_1\delta)
\\
&=
[\ipr{a_1\pm a_0}{x}]\tau^{(n)}(x\pm a_2\delta)
\end{split}
\end{equation}
for $x\in H_{c+n\delta}$. 
Note that the Hirota equation of type $(\II_0)_n$ is an 
equation for $\tau^{(n)}(x)$ on $H_{c+n\delta}$ only, while 
those of types $(\II_1)_n$ and $(\II_2)_n$ are equations  
among the three $\tau$-functions 
$\tau^{(n-1)}(x)$, $\tau^{(n)}(x)$, $\tau^{(n+1)}(x)$. 
A bilinear equation of type $(\II_1)_{n}$ can be regarded 
as a discrete version of the {\em Toda equation.} 

For each $n\in\bZ$, 
the system of 1260 Hirota equations $(\II_0)_{n}$ 
for $\tau^{(n)}(x)$ on $H_{c+n\delta}$ can be regarded as 
an ORG system of type $E_7$.  In this way, 
the whole ORG system of type $E_8$ 
for $\tau(x)$ on $D_c$ 
can be regarded as 
an infinite chain of ORG systems of type $E_7$
for $\tau^{(n)}(x)$ on $H_{c+n\delta}$ ($n\in\bZ$). 

\begin{thm}\label{thm:3C}
For an integer $n\in\bZ$, 
let 
$\tau^{(n-1)}(x)$ and $\tau^{(n)}(x)$ be 
meromorphic functions 
on $H_{c+(n-1)\delta}$ and $H_{c+n\delta}$, respectively.  
Suppose that $\tau^{(n-1)}(x)\not\equiv 0$ and that 
the following two conditions are 
satisfied\,$:$
\par\smallskip\noindent
\quad{\rm (A1)}$:$\quad
$\tau^{(n-1)}(x)$ and $\tau^{(n)}(x)$ 
satisfy all the bilinear equations of type $(\I)_{n-1/2}$.
\par\noindent
\quad{\rm (A2)}$:$\quad
$\tau^{(n)}(x)$ 
satisfies all the bilinear equations of type $(\II_0)_{n}$.
\par\smallskip\noindent
Then there exists a unique meromorphic function 
$\tau^{(n+1)}(x)$ on $H_{c+(n+1)\delta}$ such that
\par\smallskip\noindent
\quad{\rm (B)}$:$\quad
$\tau^{(n-1)}(x)$, $\tau^{(n)}(x)$, $\tau^{(n+1)}(x)$  
satisfy all the bilinear equations of type $(\II_1)_{n}$.
\par\smallskip\noindent
Furthermore, this $\tau^{(n+1)}(x)$ satisfies the 
following conditions\,$:$
\par\smallskip\noindent
\quad{\rm (C1)}$:$\quad
$\tau^{(n-1)}(x)$, $\tau^{(n)}(x)$, $\tau^{(n+1)}(x)$  
satisfy all the bilinear equations of type $(\II_2)_{n}$.
\par\smallskip\noindent
\quad{\rm (C2)}$:$\quad
$\tau^{(n)}(x)$, $\tau^{(n+1)}(x)$  
satisfy all the bilinear equations of type $(\I)_{n+1/2}$.
\par\noindent
\quad{\rm (C3)}$:$\quad
$\tau^{(n+1)}(x)$  
satisfies all the bilinear equations of type $(\II_0)_{n+1}$.
\end{thm}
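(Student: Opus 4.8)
\medskip\noindent\textbf{Strategy of the proof.}\quad
The plan is to use the equations of type $(\II_1)_n$ themselves to \emph{define} $\tau^{(n+1)}$, and then to verify — by means of {\rm(A1)}, {\rm(A2)} and the three-term relation \eqref{eq:three-term} — that the resulting function is consistent and automatically satisfies {\rm(C1)}--{\rm(C3)}. First fix one $C_3$-frame $\br{\pm a_0,\pm a_1,\pm a_2}$ of type $\II_1$, so that $\varphi(a_0)=1$ and $\varphi(a_1)=\varphi(a_2)=0$. Since $a_1\pm a_2\in\Delta(E_7)$ are nonzero with $\varphi=0$, hence not proportional to $\phi$, the entire functions $x\mapsto[\ipr{a_1\pm a_2}{x}]$ do not vanish identically on $H_{c+n\delta}$; as $\tau^{(n-1)}\not\equiv 0$, one may set
\begin{equation*}
\tau^{(n+1)}(x+a_0\delta)=
\frac{[\ipr{a_0\pm a_2}{x}]\,\tau^{(n)}(x\pm a_1\delta)-[\ipr{a_0\pm a_1}{x}]\,\tau^{(n)}(x\pm a_2\delta)}
{[\ipr{a_1\pm a_2}{x}]\,\tau^{(n-1)}(x-a_0\delta)}\qquad(x\in H_{c+n\delta}),
\end{equation*}
which defines $\tau^{(n+1)}$ as a meromorphic function on $H_{c+(n+1)\delta}$ because $x\mapsto x+a_0\delta$ is a bijection $H_{c+n\delta}\isoto H_{c+(n+1)\delta}$. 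Any meromorphic function obeying one single $(\II_1)_n$ equation must coincide with this one, which gives uniqueness in {\rm(B)} at once; observe that in this formula $\tau^{(n-1)}$ enters only as a nonvanishing multiplicative factor (it is constrained, relative to $\tau^{(n)}$, only through {\rm(A1)}). The whole content of the theorem is thus that \emph{this} $\tau^{(n+1)}$ satisfies \emph{all} equations of types $(\II_1)_n$, $(\II_2)_n$, $(\I)_{n+1/2}$ and $(\II_0)_{n+1}$.

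The clean part is the consistency of the $(\II_1)_n$ equations sharing a prescribed ``vertical'' vector $a_0$. By Proposition \ref{prop:1B}(2) every $C_3$-frame containing $a_0$ lies in the unique $C_8$-frame $\widehat A$ containing $a_0$, and $\widehat A$ is of type $\II$ because $\varphi(a_0)=1$ (Proposition \ref{prop:3A}); writing $\widehat A=\br{\pm a_0,\pm a_1,\ldots,\pm a_6,\pm a_7}$ with $\varphi(a_0)=\varphi(a_7)=1$, $\varphi(a_j)=0$ $(1\le j\le 6)$, the type-$\II_1$ frames inside $\widehat A$ with vertical vector $a_0$ are exactly $\br{\pm a_0,\pm a_i,\pm a_j}$, $1\le i<j\le 6$. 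For two of these, say with index sets $\br{i,j}$ and $\br{i,k}$, equating the two resulting expressions for $\tau^{(n+1)}(x+a_0\delta)$ and clearing the (nonvanishing) denominators leaves
\begin{equation*}
\big([\ipr{a_i\pm a_k}{x}][\ipr{a_0\pm a_j}{x}]-[\ipr{a_i\pm a_j}{x}][\ipr{a_0\pm a_k}{x}]\big)\tau^{(n)}(x\pm a_i\delta)
=[\ipr{a_0\pm a_i}{x}]\big([\ipr{a_i\pm a_k}{x}]\tau^{(n)}(x\pm a_j\delta)-[\ipr{a_i\pm a_j}{x}]\tau^{(n)}(x\pm a_k\delta)\big).
\end{equation*}
Applying \eqref{eq:three-term} in the alternative form $[z\pm\alpha][w\pm\beta]-[z\pm\beta][w\pm\alpha]=[z\pm w][\alpha\pm\beta]$ with $z=\ipr{a_i}{x}$, $\alpha=\ipr{a_k}{x}$, $w=\ipr{a_0}{x}$, $\beta=\ipr{a_j}{x}$ shows that the coefficient of $\tau^{(n)}(x\pm a_i\delta)$ on the left equals $[\ipr{a_i\pm a_0}{x}][\ipr{a_k\pm a_j}{x}]=[\ipr{a_0\pm a_i}{x}][\ipr{a_j\pm a_k}{x}]$; cancelling the common factor $[\ipr{a_0\pm a_i}{x}]$ (nonvanishing, since $\varphi(a_0\pm a_i)=1$) and using the oddness $[-z]=-[z]$, one is left with exactly the equation $(\II_0)_n$ for the ``horizontal'' $C_3$-frame $\br{\pm a_i,\pm a_j,\pm a_k}\subset\widehat A$, which holds by {\rm(A2)}. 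As any two $2$-subsets of $\br{1,\ldots,6}$ are joined by a chain of $2$-subsets sharing an element, all $\binom{6}{2}=15$ expressions agree, so $\tau^{(n+1)}$ satisfies every $(\II_1)_n$ equation whose vertical vector is $a_0$.

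Next come the $(\II_1)_n$ equations whose vertical vector $a_0'$ is different from $a_0$, and the equations {\rm(C1)}--{\rm(C3)}. For such an $a_0'$, the statement that the value already defined agrees with the one prescribed by the $a_0'$-equation is an identity among shifts of $\tau^{(n)}$ and of $\tau^{(n-1)}$ in which — since $a_0\ne a_0'$ — the factor $\tau^{(n-1)}$ now occurs at two \emph{different} points of $H_{c+(n-1)\delta}$; relating them forces one to invoke hypothesis {\rm(A1)}, i.e.\ the $(\I)_{n-1/2}$ equations coupling $\tau^{(n-1)}$ and $\tau^{(n)}$. Moreover the two sides of such an identity need not lie inside one $C_8$-frame, so the verification amounts to composing a chain of Hirota relations of types $(\I)_{n-1/2}$, $(\II_0)_n$, $(\II_1)_n$, $(\II_2)_n$ while keeping track of the accumulated gauge factors; the type-$(\II_2)_n$ relations are convenient here because each of them involves $\tau^{(n+1)}$ at the two vertices belonging to its two vertical vectors, so it transports $\tau^{(n+1)}$ from one vertical direction to a neighbouring one. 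Using the $W(E_7)$-orbit classifications of Propositions \ref{prop:3A} and \ref{prop:3B} and the transitivity of $W(E_7)$ on $\cC_{3,\I}$, $\cC_{3,\II_0}$, $\cC_{3,\II_1}$, $\cC_{3,\II_2}$, each of these checks — including those for {\rm(C1)}, {\rm(C2)}, {\rm(C3)} — reduces to a finite list of model configurations, on which it becomes an identity in $\tau^{(n-1)},\tau^{(n)},\tau^{(n+1)}$ derivable by repeated application of \eqref{eq:three-term} together with {\rm(A1)} and {\rm(A2)}; once all of $(\II_1)_n$ and $(\II_2)_n$ are available, the closure properties {\rm(C2)}\,$=(\I)_{n+1/2}$ and {\rm(C3)}\,$=(\II_0)_{n+1}$ come out by analogous three-term manipulations.

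The step I expect to be the real obstacle is precisely this last one: the consistency of the $(\II_1)_n$ system across different vertical directions. It is the only place where hypothesis {\rm(A1)} is genuinely needed, and — in contrast to the fixed-$a_0$ consistency, which stays inside one octahedron and reduces transparently to {\rm(A2)} — it forces one to propagate the newly constructed $\tau^{(n+1)}$ between different $C_8$-frames by composing chains of Hirota relations, so that the delicate point is organizing this propagation, together with the attendant gauge bookkeeping, into a finite $W(E_7)$-orbit-indexed verification. That verification is carried out in detail in Appendix A.
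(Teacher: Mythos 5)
Your construction of $\tau^{(n+1)}$ from a single $(\II_1)_n$ equation, and the uniqueness that comes with it, are correct; and your second paragraph --- showing that the fifteen $(\II_1)_n$ equations sharing the vertical vector $a_0$ inside its $C_8$-frame are mutually consistent because their pairwise compatibility reduces, via the three-term relation, to the $(\II_0)_n$ equations of hypothesis (A2) --- is in substance the paper's Lemma \ref{lem:A1} together with the definition step of the appendix. Up to that point you and the paper agree.

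The gap is everything after that. The real content of the theorem is (i) consistency of the $(\II_1)_n$ equations for the remaining vertical vectors (there are $|\cC_{3,\II_1}|=1890$ such frames and you have verified $15$), and (ii) the derivation of (C1)--(C3); your third paragraph only describes a strategy and then explicitly defers the verification to Appendix A, so nothing is actually proved. Moreover the reduction you propose --- ``by transitivity of $W(E_7)$ on each orbit of $C_3$-frames, each check reduces to a finite list of model configurations'' --- is not valid as stated: transitivity transports an identity from one frame to another only when the functions involved are $W(E_7)$-invariant, and $\tau^{(n-1)},\tau^{(n)}$ are arbitrary meromorphic functions here. This is exactly the difficulty the paper's proof is organized around: it first assumes $\tau^{(n-1)},\tau^{(n)}$ are $W(E_7)$-invariant and proves that the constructed $\tau^{(n+1)}$ is then itself $W(E_7)$-invariant --- the nontrivial step being $s_1$-invariance, which is where hypothesis (A1) is actually consumed, via two applications of $(\I)_{n-1/2}$ to the frames $\br{\pm u_1,\pm u_2,\pm u_3}$, $u_i=\hf\phi-v_i$, and $\br{\pm b_1,\pm b_2,\pm b_3}$, $b_i=\hf v_{0123}-v_i$ --- after which transitivity does apply and one explicit computation per type suffices for (C1)--(C3); the general case is then recovered by lifting to the covering $\widetilde{H}_{\kappa}=W(E_7)\times H_{\kappa}$, on which the lifted functions are invariant by construction. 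Without either this invariance device (which you never introduce) or a direct and much longer frame-by-frame verification, your argument does not close.
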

This theorem can be proved essentially by 
the same argument as that of Masuda \cite[Section 3]{Masuda2011}. 
For completeness, we include a proof of Theorem \ref{thm:3C} 
in Appendix A.  

\section{\boldmath Hypergeometric ORG $\tau$-functions}
\label{sec:4}
Keeping the notations in the previous section, 
we consider an ORG $\tau$-function $\tau=\tau(x)$ 
on 
\begin{equation}
D_c=\bigsqcup_{n\in Z}H_{c+n\delta},
\quad
H_{c+n\delta}=\brm{x\in V}{\ipr{\phi}{x}=c+n\delta}
\quad(n\in\bZ),
\end{equation}
where $c\in\bC$.  For each $n\in\bZ$ 
we denote by $\tau^{(n)}=\tau\big|_{H_{c+n\delta}}$ 
the restriction of $\tau$ to $H_{c+n\delta}$. 
\begin{dfn}\rm
A meromorphic ORG $\tau$-function $\tau(x)$ on 
$D_c$ is called 
a {\em hypergeometric $\tau$-function} if 
$\tau^{(n)}(x)=0$ for $n<0$, and $\tau^{(0)}(x)\not\equiv0$. 
\end{dfn}

We now apply Theorem \ref{thm:3C} for constructing 
hypergeometric $\tau$-functions.
Since $\tau^{(-1)}(x)=0$ $(x\in H_{c-\delta})$,  
for any $C_8$-frame $\br{\pm a_0,\ldots,\pm a_7}$ 
of type $\II$ with 
\begin{equation}\label{eq:C8typeII}
\ipr{\phi}{a_0}=\ipr{\phi}{a_7}=1,\quad
\ipr{\phi}{a_i}=0\quad(i=1,\ldots,6), 
\end{equation}
$\tau^{(0)}(x)$ $(x\in H_{c})$ must satisfy 
the following three types of equations: 
\begin{equation}
\begin{split}
(\II_2)_{0}:\quad&
[\ipr{a_0\pm a_7}{x}]\tau^{(0)}(x\pm a_i\delta)=0,
\\
(\II_1)_{0}:\quad&
[\ipr{a_r\pm a_j}{x}]\tau^{(0)}(x\pm a_i\delta)
=[\ipr{a_r\pm a_i}{x}]\tau^{(0)}(x\pm a_j\delta),
\\
(\II_0)_{0}:\quad&
[\ipr{a_j\pm a_k}{x}]\tau^{(0)}(x\pm a_i\delta)
+[\ipr{a_k\pm a_i}{x}]\tau^{(0)}(x\pm a_j\delta)
\\
&\qquad\qquad\mbox{}
+[\ipr{a_i\pm a_j}{x}]\tau^{(0)}(x\pm a_k\delta)=0, 
\end{split}
\end{equation}
where $r=0,7$ and $i,j,k\in\br{1,\ldots,6}$. 
Noting that $a_0+a_7=\phi$, 
in order to fulfill $(\II_2)_0$ for any $C_8$-frame of type $\II$, 
we consider the case where $c=\omega\in \Omega$ 
is a period of $[z]$, so that 
$[\ipr{a_0+a_7}{x}]=[\ipr{\phi}{x}]=[\omega]=0$. 
Equations of type $(\II_0)_0$ follow from those of type $(\II_0)_1$.  In fact,
since 
\begin{equation}
\tau^{(0)}(x\pm a_j\delta)=\frac{[\ipr{a_0\pm a_j}{x}]}{[\ipr{a_0\pm a_k}{x}]}
\tau^{(0)}(x\pm a_k\delta)
\end{equation}
for any distinct $j,k\in\br{1,\ldots,6}$, equations $(\II_0)_0$ reduce 
to the functional equation \eqref{eq:three-term} of $[z]$.  

\begin{thm}\label{thm:4A}
Let $\omega\in\Omega$ be a period of the function $[z]$. 
Let $\tau^{(0)}(x)$ and $\tau^{(1)}(x)$ be nonzero meromorphic 
functions on 
$H_{\omega}$ and $H_{\omega+\delta}$, respectively.  
Suppose that 
\begin{equation}\label{eq:4AII1}
\frac{\tau^{(0)}(x\pm a_1\delta)}{\tau^{(0)}(x\pm a_2\delta)}
=
\frac{[\ipr{a_0\pm a_1}{x}]}{[\ipr{a_0\pm a_2}{x}]}
\quad(x\in H_{\omega})
\end{equation}
for any $C_3$-frame $\br{\pm a_0,\pm a_1,\pm a_2}$ 
of type $\II_1$ with $\ipr{\phi}{a_0}=1$, and 
\begin{equation}\label{eq:4AI}
\begin{split}
&[\ipr{a_1\pm a_2}{x}]
\tau^{(0)}(x-a_0\delta)
\tau^{(1)}(x+a_0\delta)
+
[\ipr{a_2\pm a_0}{x}]
\tau^{(0)}(x-a_1\delta)
\tau^{(1)}(x+a_1\delta)
\\
&\qquad\qquad\mbox{}
+
[\ipr{a_0\pm a_1}{x}]
\tau^{(0)}(x-a_2\delta)
\tau^{(1)}(x+a_2\delta)
=0
\quad(x\in H_{\omega+\delta/2})
\end{split}
\end{equation}
for any $C_3$-frame $\br{\pm a_0,\pm a_1,\pm a_2}$ 
with $\ipr{\phi}{a_0}=\ipr{\phi}{a_1}=\ipr{\phi}{a_2}=\hf$. 
Then there exists a unique hypergeometric $\tau$-function
$\tau=\tau(x)$ on $D_\omega$ such that 
$\tau^{(n)}(x)=0$ for $n<0$ and 
\begin{equation}
\tau^{(0)}(x)=\tau(x)\quad(x\in H_\omega), 
\quad
\tau^{(1)}(x)=\tau(x)\quad(x\in H_{\omega+\delta}). 
\end{equation}
\end{thm}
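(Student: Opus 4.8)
The plan is to recover $\tau$ as the chain of its components $\tau^{(n)}$, constructed one step at a time by repeated use of the recursion theorem (Theorem~\ref{thm:3C}), after checking the bottom of the tower by hand. Put $\tau^{(n)}\equiv0$ for $n<0$. Then every equation of type $(\I)_{n+1/2}$ with $n\le-1$, and every equation of type $(\II_0)_n$, $(\II_1)_n$ or $(\II_2)_n$ with $n<0$, holds trivially, all of its terms carrying a factor $\tau^{(k)}$ with $k<0$. The remaining bottom-level equations I would match with the two hypotheses as follows. The equations of type $(\II_1)_0$ have vanishing left-hand side (a factor $\tau^{(-1)}\equiv0$), and setting the right-hand side to zero gives exactly the relations \eqref{eq:4AII1}; then the equations of type $(\II_0)_0$ for $\tau^{(0)}$ follow from \eqref{eq:4AII1} and the three-term relation \eqref{eq:three-term}, as in the discussion preceding the theorem. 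The equations of type $(\II_2)_0$ hold because a $C_3$-frame of type $\II_2$ lies in a $C_8$-frame of type $\II$ (Proposition~\ref{prop:1B}), so that its two vectors with $\varphi=1$ — playing the roles of $a_0$ and $a_1$ — sum to $\phi$; hence on $H_\omega$ the right-hand side of $(\II_2)_0$ carries the factor $[\ipr{\phi}{x}]=[\omega]=0$, while its left-hand side carries $\tau^{(-1)}\equiv0$. This is the essential use of the hypothesis $\omega\in\Omega$. Finally, the equations of type $(\I)_{1/2}$ are literally \eqref{eq:4AI}.

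To run Theorem~\ref{thm:3C} from $n=1$ onward, one still needs its hypothesis (A2): that $\tau^{(1)}$ satisfy all Hirota equations of type $(\II_0)_1$. This is the step I expect to be the main obstacle. It does not come from Theorem~\ref{thm:3C} itself — that theorem cannot be invoked at $n=0$, its requirement $\tau^{(n-1)}\not\equiv0$ failing there, and the relation $(\II_1)_0$ with $\tau^{(-1)}\equiv0$ carries no information about $\tau^{(1)}$. Instead I would establish $(\II_0)_1$ by a direct computation in the spirit of Appendix~A: combine the $(\I)_{1/2}$ relations \eqref{eq:4AI} for several $C_3$-frames of type $\I$ with the explicit structure of $\tau^{(0)}$ encoded in \eqref{eq:4AII1} and with the already-verified equations $(\II_0)_0$; after the substitutions the $\tau^{(1)}$-factors cancel, and $(\II_0)_1$ reduces to an identity among products of $[\,\cdot\,]$-values which is a consequence of \eqref{eq:three-term}. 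The hypotheses $\tau^{(0)}\not\equiv0$ and $\tau^{(1)}\not\equiv0$ make the divisions involved legitimate.

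With (A1), which is \eqref{eq:4AI}, and (A2), which is the system $(\II_0)_1$, now available, the induction runs cleanly. Theorem~\ref{thm:3C} at $n=1$, using $\tau^{(0)}\not\equiv0$, yields a unique meromorphic $\tau^{(2)}$ on $H_{\omega+2\delta}$ satisfying all equations of types $(\II_1)_1$, $(\II_2)_1$, $(\I)_{3/2}$ and $(\II_0)_2$. At each stage the induction needs the next component to be nonzero, and I would read this off from the equation $(\II_2)_n$ just obtained: if $\tau^{(n+1)}\equiv0$, then $(\II_2)_n$ collapses to $[\ipr{a_1\pm a_0}{x}]\,\tau^{(n)}(x\pm a_2\delta)\equiv0$ with $a_0+a_1=\phi$, that is, $[\omega+n\delta]\,[\ipr{a_1-a_0}{x}]\,\tau^{(n)}(x+a_2\delta)\,\tau^{(n)}(x-a_2\delta)\equiv0$ on $H_{\omega+n\delta}$; since $\bZ\delta\cap\Omega=\{0\}$ forces $[\omega+n\delta]\ne0$ for $n\ge1$, this gives $\tau^{(n)}\equiv0$, a contradiction. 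Proceeding inductively, Theorem~\ref{thm:3C} at level $n$ — whose hypotheses (A1), equal to $(\I)_{n-1/2}$, and (A2), equal to $(\II_0)_n$, are precisely the conclusions (C2), (C3) of the level-$(n{-}1)$ step, and whose requirement $\tau^{(n-1)}\not\equiv0$ has just been checked — produces a unique $\tau^{(n+1)}$ satisfying all equations of types $(\II_1)_n$, $(\II_2)_n$, $(\I)_{n+1/2}$, $(\II_0)_{n+1}$.

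It remains to collect the equations and conclude. By Proposition~\ref{prop:3B} every $C_3$-frame relative to $P$ is of one of the types $\I,\II_0,\II_1,\II_2$, so the ORG system on $D_\omega$ is the union of the families $(\I)_m$ $(m\in\bZ+\hf)$ and $(\II_0)_m$, $(\II_1)_m$, $(\II_2)_m$ $(m\in\bZ)$. Each of these has now been verified: trivially whenever only components of negative index occur, by the discussion of the bottom levels above, by the separate argument for $(\II_0)_1$, and by Theorem~\ref{thm:3C} at all higher levels. Hence $\tau(x):=\tau^{(n)}(x)$ for $x\in H_{\omega+n\delta}$ is an ORG $\tau$-function on $D_\omega$; it is hypergeometric since $\tau^{(n)}\equiv0$ for $n<0$ and $\tau^{(0)}\not\equiv0$, and it has $\tau^{(0)},\tau^{(1)}$ as its first two components. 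Uniqueness is immediate: any hypergeometric $\tau$-function $\tau'$ with these components has ${\tau'}^{(n)}\equiv0$ for $n<0$ by definition, all of its components ${\tau'}^{(n)}$ with $n\ge0$ are nonzero by the same $(\II_2)_n$-argument applied to it, and hence each ${\tau'}^{(n+1)}$ with $n\ge1$ is forced by the uniqueness clause of Theorem~\ref{thm:3C}.
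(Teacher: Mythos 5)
Your overall architecture coincides with the paper's proof: the bottom-level equations $(\II_2)_0$, $(\II_1)_0$, $(\II_0)_0$ are disposed of exactly as in the discussion preceding the theorem (using $[\omega]=0$, the hypothesis \eqref{eq:4AII1}, and the three-term relation), the recursion is run through Theorem~\ref{thm:3C} from $n=1$ onward, and the non-vanishing of each $\tau^{(n+1)}$ is extracted from $(\II_2)_n$ together with $[\omega+n\delta]\ne 0$, which is precisely the argument the paper gives. The uniqueness discussion at the end is also in order.

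The one place where your proof is not complete is the step you yourself single out as the main obstacle: the verification that $\tau^{(1)}$ satisfies all equations of type $(\II_0)_1$, i.e.\ hypothesis (A2) of Theorem~\ref{thm:3C} at its first application $n=1$. You are right that this is genuinely needed (the construction of $\tau^{(2)}$ in Appendix~A rests on Lemma~\ref{lem:A1} applied to $\tau^{(1)}$, hence on $(\II_0)_1$), right that it is not produced by any earlier instance of Theorem~\ref{thm:3C}, and right that it must therefore be derived from \eqref{eq:4AII1} and \eqref{eq:4AI} directly --- a point on which the paper's own proof is silent. But you do not carry out the derivation; you only assert that after suitable substitutions the $\tau^{(1)}$-factors cancel and the equation reduces to an identity among $[\,\cdot\,]$-values. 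As stated this cannot be the whole story: each equation of type $(\II_0)_1$ is quadratic in $\tau^{(1)}$, whereas each term of \eqref{eq:4AI} contains exactly one $\tau^{(1)}$-factor, so one must pair up two $(\I)_{1/2}$-relations (writing $x\pm a\delta$, $\varphi(a)=0$, $x\in H_{\omega+\delta}$, as $y+b\delta$ and $y'+b'\delta$ with $\varphi(b)=\varphi(b')=\hf$) and then invoke \eqref{eq:4AII1} and $(\II_0)_0$ to eliminate the resulting $\tau^{(0)}$-factors; this is a computation of the same order as those in Appendix~A, not a formal cancellation. Until that identity is actually established the recursion cannot be launched, so this remains a gap in the proposal --- and, it should be said, an unaddressed step in the paper's own proof as well.
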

\begin{proof}{Proof} 
We apply Theorem \ref{thm:3C} to nonzero meromorphic functions 
$\tau^{(n-1)}(x)$, $\tau^{(n)}(x)$ on 
$H_{\omega+(n-1)\delta}$, $H_{c+n\delta}$,  
for constructing $\tau^{(n+1)}(x)$ on 
$H_{\omega+(n+1)\delta}$ 
recursively for $n=1,2,\ldots$. 
At each step, we need to show that the meromorphic 
function 
$\tau^{(n+1)}(x)$ determined by Theorem \ref{thm:3C} is not 
identically zero. 
If $\tau^{(n+1)}(x)\equiv 0$, the bilinear equations 
$(\II_2)_{n}$ imply
\begin{equation}
[\ipr{a_0\pm a_1}{x}]\tau^{(n)}(x\pm a_2\delta)=0\quad(x\in H_{\omega+n\delta})
\end{equation}
for any $C_3$-frame $\br{\pm a_0,\pm a_1,\pm a_2}$ 
of type $\II_2$ with $\ipr{\phi}{a_0}=\ipr{\phi}{a_1}=1$, $\ipr{\phi}{a_2}=0$.
Since $a_0+a_1=\phi$, 
$[\ipr{a_0+a_1}{x}]=[\omega+n\delta]\ne 0$.  
Also, since $[\ipr{a_0-a_1}{x}]\not\equiv 0$, we have 
$\tau^{(n)}(x\pm a_2\delta)=0$ and hence $\tau^{(n)}(x)\equiv 0$ on 
$H_{\omega+n\delta}$, contrarily to the hypothesis.  
\end{proof}

We now fix a $C_3$-frame $\br{\pm a_0,\pm a_1,\pm a_2}$ 
of type $\II_1$ with 
$\ipr{\phi}{a_0}=1$, $\ipr{\phi}{a_1}=\ipr{\phi}{a_2}=0.$
Then the $\tau$-functions $\tau^{(n)}$ on 
$H_{\omega+n\delta}$ 
for $n=2,3,\ldots$ are uniquely determined by the bilinear 
equations
\begin{equation}\label{eq:Todatype}
\begin{split}
(\II_1)_{n}:\quad
&[\ipr{a_1\pm a_2}{x}]\tau^{(n-1)}(x-a_0\delta)\tau^{(n+1)}(x+a_0\delta)
\\
&
=
[\ipr{a_0\pm a_2}{x}]\tau^{(n)}(x\pm a_1\delta)
-[\ipr{a_0\pm a_1}{x}]\tau^{(n)}(x\pm a_2\delta)
\end{split}
\end{equation}
of Toda type.  
From this recursive structure,  it follows that 
the $\tau$-functions $\tau^{(n)}(x)$ are expressed in terms of 
{\em 2-directional Casorati determinants}.  

\begin{figure}
$$
\unitlength=0.8pt
\begin{picture}(280,250)
\put(0,0){\line(1,0){200}}
\put(20,10){\line(1,0){200}}
\put(40,20){\line(1,0){200}}
\put(60,30){\line(1,0){200}}
\put(80,40){\line(1,0){200}}
\put(0,0){\line(2,1){80}}
\put(50,0){\line(2,1){80}}
\put(100,0){\line(2,1){80}}
\put(150,0){\line(2,1){80}}
\put(200,0){\line(2,1){80}}
\multiput(-2,-2)(50,0){5}{\small$\bullet$}
\multiput(18,8)(50,0){5}{\small$\bullet$}
\multiput(38,18)(50,0){5}{\small$\bullet$}
\multiput(58,28)(50,0){5}{\small$\bullet$}
\multiput(78,38)(50,0){5}{\small$\bullet$}
\put(15,60){\line(1,0){150}}
\put(35,70){\line(1,0){150}}
\put(55,80){\line(1,0){150}}
\put(75,90){\line(1,0){150}}
\put(15,60){\line(2,1){60}}
\put(65,60){\line(2,1){60}}
\put(115,60){\line(2,1){60}}
\put(165,60){\line(2,1){60}}
\multiput(13,58)(50,0){4}{\small$\bullet$}
\multiput(33,68)(50,0){4}{\small$\bullet$}
\multiput(53,78)(50,0){4}{\small$\bullet$}
\multiput(73,88)(50,0){4}{\small$\bullet$}
\put(30,120){\line(1,0){100}}
\put(50,130){\line(1,0){100}}
\put(70,140){\line(1,0){100}}
\put(30,120){\line(2,1){40}}
\put(80,120){\line(2,1){40}}
\put(130,120){\line(2,1){40}}
\multiput(28,118)(50,0){3}{\small$\bullet$}
\multiput(48,128)(50,0){3}{\small$\bullet$}
\multiput(68,138)(50,0){3}{\small$\bullet$}
\put(45,180){\line(1,0){50}}
\put(65,190){\line(1,0){50}}
\put(45,180){\line(2,1){20}}
\put(95,180){\line(2,1){20}}
\multiput(43,178)(50,0){2}{\small$\bullet$}
\multiput(63,188)(50,0){2}{\small$\bullet$}
\put(58,238){\small$\bullet$}
\multiput(0,0)(0.5,2){120}{\line(1,0){1}}
\multiput(80,40)(-0.2,2){100}{\line(1,0){1}}
\multiput(200,0)(-1.17,2){120}{\line(1,0){1}}
\multiput(280,40)(-2,1.82){110}{\line(1,0){1}}
\multiput(140,20)(-0.365,1){220}{\line(1,0){1}}
\put(120.5,75){\circle*{2}}
\put(80.5,185){\circle*{2}}
\multiput(100,130)(-2.2,2){26}{\line(1,0){1}}
\multiput(100,130)(-1.2,2){30}{\line(1,0){1}}
\multiput(100,130)(0.5,2){31}{\line(1,0){1}}
\multiput(100,130)(-0.2,2){26}{\line(1,0){1}}
\end{picture}
$$
\caption{2-Directional Casorati determinants}
\end{figure}

\begin{thm}\label{thm:4B} 
Under the assumption of Theorem \ref{thm:4A}, suppose 
that $\tau^{(1)}(x)$ on $H_{\omega+\delta}$ is expressed 
in the form $\tau^{(1)}(x)=g^{(1)}(x)\psi(x)$ 
with a nonzero meromorphic function $g^{(1)}(x)$ 
such that 
\begin{equation}
\frac{g^{(1)}(x\pm a_1\delta)}{g^{(1)}(x\pm a_2\delta)}
=
\frac{[\ipr{a_0\pm a_1}{x}]}{[\ipr{a_0\pm a_2}{x}]}
\qquad(x\in H_{\omega+\delta}).
\end{equation}
for a $C_3$-frame $\br{\pm a_0,\pm a_1,\pm a_2}$ 
of type $\II_1$ with $\ipr{\phi}{a_0}=1$, 
$\ipr{\phi}{a_1}=\ipr{\phi}{a_2}=0$. 
Then the 
components $\tau^{(n)}(x)$ of the hypergeometric 
$\tau$-function $\tau(x)$ are expressed as follows 
in terms of 2-directional Casorati determinants\,$:$
\begin{equation}\label{eq:2dirCasorati}
\tau^{(n)}(x)=g^{(n)}(x) K^{(n)}(x),\quad
K^{(n)}(x)=\det\big(\psi^{(n)}_{ij}(x)\big)_{i,j=1}^{n}
\quad(x\in H_{\omega+n\delta})
\end{equation}
for $n=0,1,2,\ldots$, 
where
\begin{equation}
\psi_{ij}^{(n)}(x)=
\psi(x-(n-1)a_0\delta+
(n+1-i-j)a_1\delta+(j-i)a_2\delta)
\end{equation}
for $i,j=1,\ldots,n$. 
The gauge factors $g^{(n)}(x)$ are determined 
inductively from $g^{(0)}(x)=\tau^{(0)}(x)$ and 
$g^{(1)}(x)$ by
\begin{equation}\label{eq:recgamma}
\frac{g^{(n-1)}(x-a_0\delta)
g^{(n+1)}(x+a_0\delta)}
{g^{(n)}(x\pm a_1\delta)}
=
\frac{[\ipr{a_0\pm a_2}{x}]
}{[\ipr{a_1\pm a_2}{x}]}
\quad(x\in H_{\omega+n\delta})
\end{equation}
for $n=1,2,\ldots$. 
\end{thm}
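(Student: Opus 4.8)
\smallskip\noindent
The plan is to run an induction on $n$ that turns the Toda-type recursion $(\II_1)_n$ of \eqref{eq:Todatype} for the components $\tau^{(n)}$ into the classical Desnanot--Jacobi (Dodgson condensation) identity for the Casorati determinants $K^{(n)}$, with all the theta factors absorbed into the gauge recursion \eqref{eq:recgamma}. First I would define $g^{(n)}$ $(n\ge0)$ by \eqref{eq:recgamma} with $g^{(0)}=\tau^{(0)}$ and the given $g^{(1)}$ --- a legitimate recursive definition, since for $n\ge1$ equation \eqref{eq:recgamma} expresses $g^{(n+1)}$ on $H_{\omega+(n+1)\delta}$ in terms of $g^{(n-1)}$, $g^{(n)}$ and nonzero theta factors --- and set $\sigma^{(n)}(x)=g^{(n)}(x)K^{(n)}(x)$ on $H_{\omega+n\delta}$, $\sigma^{(n)}=0$ for $n<0$. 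The claim to prove is $\sigma^{(n)}=\tau^{(n)}$ for all $n$. The base cases are immediate: $K^{(0)}$ is the empty determinant $1$, so $\sigma^{(0)}=g^{(0)}=\tau^{(0)}$, while $K^{(1)}(x)=\psi^{(1)}_{11}(x)=\psi(x)$, so $\sigma^{(1)}=g^{(1)}\psi=\tau^{(1)}$ by hypothesis. By the uniqueness remark just before the theorem --- the components $\tau^{(n)}$ $(n\ge2)$ are determined from $\tau^{(0)},\tau^{(1)}$ by the single family $(\II_1)_n$ attached to the fixed $C_3$-frame, using $\tau^{(n)}\not\equiv0$ from Theorem \ref{thm:4A} --- it then suffices to check that the $\sigma^{(n)}$ satisfy those same Toda equations.

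The first ingredient is a purely linear-algebraic identity for the $2$-directional Casorati determinants. Setting $D=\big(\psi^{(n+1)}_{ij}(x+a_0\delta)\big)_{i,j=1}^{n+1}$, so that $\det D=K^{(n+1)}(x+a_0\delta)$, one checks from the shift rule for the entries $\psi^{(m)}_{ij}$ (for fixed $m$ they depend on $(i,j)$ only through $(n{+}1{-}i{-}j)a_1\delta+(j{-}i)a_2\delta$) that deleting the first row and column of $D$ gives $K^{(n)}(x-a_1\delta)$; deleting the last row and column gives $K^{(n)}(x+a_1\delta)$; deleting the first row and last column gives $K^{(n)}(x-a_2\delta)$; deleting the last row and first column gives $K^{(n)}(x+a_2\delta)$; and deleting the first and last rows together with the first and last columns gives $K^{(n-1)}(x-a_0\delta)$. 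The Desnanot--Jacobi identity for $D$ therefore reads
\[
K^{(n-1)}(x-a_0\delta)\,K^{(n+1)}(x+a_0\delta)
=K^{(n)}(x\pm a_1\delta)-K^{(n)}(x\pm a_2\delta).
\]

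The second ingredient is a lemma, proved by induction on $n$: every gauge factor $g^{(n)}$ inherits the constraint \eqref{eq:4AII1} that $\tau^{(0)}$ satisfies, namely
\[
\frac{g^{(n)}(x\pm a_1\delta)}{g^{(n)}(x\pm a_2\delta)}
=\frac{[\ipr{a_0\pm a_1}{x}]}{[\ipr{a_0\pm a_2}{x}]}
\qquad(x\in H_{\omega+n\delta}).
\]
For $n=0$ this is \eqref{eq:4AII1} and for $n=1$ it is the hypothesis on $g^{(1)}$; for the step $n\mapsto n+1$ one substitutes the defining relation \eqref{eq:recgamma} at levels $n-1$ and $n$ and uses $\ipr{a_i}{a_j}=\delta_{ij}$, after which the identity reduces to a telescoping cancellation of the factors $[\ipr{a_0\pm a_i}{x\pm a_j\delta}]$, needing no further input. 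Granting this lemma, the Desnanot--Jacobi identity above, and the inductive hypothesis $\sigma^{(m)}=\tau^{(m)}$ $(m\le n)$, the Toda equation $(\II_1)_n$ for the $\sigma$'s follows by expanding $\sigma^{(m)}=g^{(m)}K^{(m)}$, replacing $g^{(n-1)}(x-a_0\delta)g^{(n+1)}(x+a_0\delta)$ through \eqref{eq:recgamma}, substituting the Casorati identity, and using the lemma to turn $[\ipr{a_0\pm a_2}{x}]\,g^{(n)}(x\pm a_1\delta)$ into $[\ipr{a_0\pm a_1}{x}]\,g^{(n)}(x\pm a_2\delta)$; this reproduces the right-hand side of $(\II_1)_n$, so $\sigma^{(n+1)}=\tau^{(n+1)}$, and a posteriori $K^{(n)}=\tau^{(n)}/g^{(n)}\not\equiv0$, so all divisions used along the way are legitimate.

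The hard part will be the bookkeeping in the first ingredient: one must pin down exactly how the six shifts $x\mp a_0\delta$, $x\pm a_1\delta$, $x\pm a_2\delta$ act on the index pattern $(n{+}1{-}i{-}j,\;j{-}i)$ of $\psi^{(n)}_{ij}$ and confirm that the six resulting shifted Casorati determinants are precisely $\det D$, its four corner minors, and its central minor, for one and the same matrix $D$. This identification is where the choice of a $C_3$-frame of type $\II_1$, and the precise form of the gauge recursion \eqref{eq:recgamma}, get forced; once it is established, Desnanot--Jacobi delivers the bilinear identity for free and everything else is routine algebra with $[z]$ and $\ipr{a_i}{a_j}=\delta_{ij}$. (This is the usual mechanism by which Toda-type bilinear equations produce Casorati-determinant solutions; the only genuinely new point is the matching with the $E_8$-frame combinatorics.)
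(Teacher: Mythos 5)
Your proposal is correct and follows essentially the same route as the paper: the same gauge lemma showing each $g^{(n)}$ inherits the ratio condition $g^{(n)}(x\pm a_1\delta)/g^{(n)}(x\pm a_2\delta)=[\ipr{a_0\pm a_1}{x}]/[\ipr{a_0\pm a_2}{x}]$, the same reduction of the Toda equation $(\II_1)_n$ to the pure recurrence $K^{(n-1)}(x-a_0\delta)K^{(n+1)}(x+a_0\delta)=K^{(n)}(x\pm a_1\delta)-K^{(n)}(x\pm a_2\delta)$, and the same Desnanot--Jacobi (Lewis Carroll) identity identifying the corner and central minors of the 2-directional Casorati matrix with the shifted $K$'s. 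The only cosmetic difference is that you verify the explicit determinant ansatz against the recursion and invoke uniqueness, whereas the paper derives the recurrence for $K^{(n)}$ and then solves it; the content is identical.
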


\begin{lem}\label{lem:gauge}
The gauge factors 
$g^{(n)}(x)$ $(x\in H_{\omega+n\delta})$
defined by \eqref{eq:recgamma} 
satisfy 
\begin{equation}\label{eq:condgamma}
\frac{g^{(n)}(x\pm a_1\delta)}
{g^{(n)}(x\pm a_2\delta)}
=
\frac
{[\ipr{a_0\pm a_1}{x}]}
{[\ipr{a_0\pm a_2}{x}]}
\qquad(n=0,1,2,\ldots). 
\end{equation}
\end{lem}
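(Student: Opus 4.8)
The plan is to prove \eqref{eq:condgamma} by induction on $n$, with the defining recursion \eqref{eq:recgamma} supplying the inductive step. The base cases are immediate: for $n=0$ the identity \eqref{eq:condgamma} is exactly the hypothesis \eqref{eq:4AII1} on $\tau^{(0)}=g^{(0)}$, and for $n=1$ it is precisely the condition imposed on $g^{(1)}$ in the statement of Theorem \ref{thm:4B}. So it remains to show that, if \eqref{eq:condgamma} holds for $g^{(n-1)}$ and for $g^{(n)}$ (with $n\ge1$), then it holds for $g^{(n+1)}$.

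For the inductive step I would fix $y\in H_{\omega+(n+1)\delta}$ and write $y=x+a_0\delta$ with $x=y-a_0\delta\in H_{\omega+n\delta}$ (using $\ipr{\phi}{a_0}=1$); then $x\pm a_1\delta$ and $x\pm a_2\delta$ also lie in $H_{\omega+n\delta}$, since $\ipr{\phi}{a_1}=\ipr{\phi}{a_2}=0$. Applying \eqref{eq:recgamma} with its base point taken at each of these four shifted points expresses $g^{(n+1)}(y\pm a_1\delta)$ and $g^{(n+1)}(y\pm a_2\delta)$ explicitly in terms of $g^{(n)}$ and $g^{(n-1)}$; for instance
\begin{equation*}
g^{(n+1)}(y+a_1\delta)=
\frac{[\ipr{a_0\pm a_2}{x+a_1\delta}]}{[\ipr{a_1\pm a_2}{x+a_1\delta}]}\cdot
\frac{g^{(n)}(x+2a_1\delta)\,g^{(n)}(x)}{g^{(n-1)}(x+a_1\delta-a_0\delta)}.
\end{equation*}
Forming the quotient $g^{(n+1)}(y\pm a_1\delta)/g^{(n+1)}(y\pm a_2\delta)$ and regrouping, the $g^{(n)}$-values pair off around $x+a_1\delta$ and around $x-a_1\delta$, while the $g^{(n-1)}$-values pair off around $x-a_0\delta$; the induction hypothesis applied to $g^{(n)}$ and to $g^{(n-1)}$ then replaces all of them by ratios of $[\cdot]$-factors, leaving an identity purely among products of $[\cdot]$-factors to be checked.

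That last identity I would verify by setting $p=\ipr{a_0}{x}$, $q=\ipr{a_1}{x}$, $s=\ipr{a_2}{x}$ and evaluating every inner product using only the $C_3$-frame relations $\ipr{a_i}{a_j}=\delta_{ij}$ (so that, for example, $[\ipr{a_0\pm a_2}{x+a_1\delta}]=[p+s][p-s]$, since $a_1$ is orthogonal to $a_0$ and $a_2$). The contribution of the $\ipr{a_1\pm a_2}{\cdot}$-factors from the prefactors of \eqref{eq:recgamma} then reduces to $1$; every remaining factor involves $p$, and after the evident cancellations what is left is $\frac{[p+q+\delta][p-q+\delta]}{[p+s+\delta][p-s+\delta]}$, which is exactly $\frac{[\ipr{a_0\pm a_1}{y}]}{[\ipr{a_0\pm a_2}{y}]}$ because $\ipr{a_0}{y}=p+\delta$, $\ipr{a_1}{y}=q$ and $\ipr{a_2}{y}=s$. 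The only real obstacle is the bookkeeping — a couple of dozen shifted $[\cdot]$-arguments to track, together with the pairing-up of the $g^{(n)}$- and $g^{(n-1)}$-values — and it is worth remarking that the three-term relation \eqref{eq:three-term} is never invoked: the whole simplification is pure cancellation once the $C_3$-frame relations have been used.
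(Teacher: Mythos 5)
Your proposal is correct and is essentially the paper's own argument: a two-step induction with base cases $n=0,1$ supplied by the hypotheses on $\tau^{(0)}$ and $g^{(1)}$, followed by expanding the recursion \eqref{eq:recgamma} at shifted base points, pairing the $g^{(n)}$- and $g^{(n-1)}$-values so the induction hypothesis applies, and cancelling the resulting $[\cdot]$-factors (I checked your final reduction to $[p+q+\delta][p-q+\delta]/[p+s+\delta][p-s+\delta]$ and it is right). The paper merely organizes the bookkeeping slightly differently, by first using the induction hypothesis for $g^{(n)}$ to write $g^{(n+1)}(x)$ in two equivalent forms before shifting by $\pm a_1\delta$ and $\pm a_2\delta$; like yours, its proof never invokes the three-term relation.
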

\begin{proof}{Proof}
Formulas \eqref{eq:condgamma} for $n=0,1$ are 
included in the 
assumption for $g^{(0)}(x)=\tau^{(0)}(x)$ and 
$g^{(1)}(x)$.  
For $n=1,2,\ldots$, we show inductively 
that $g^{(n+1)}(x)$ 
defined by \eqref{eq:recgamma} satisfies 
this condition.
From \eqref{eq:condgamma} for $g^{(n)}(x)$, 
we have 
\begin{equation}
\begin{split}
g^{(n+1)}(x+a_0\delta)
=
\frac{[\ipr{a_0\pm a_2}{x}]}
{[\ipr{a_1\pm a_2}{x}]}
\frac{g^{(n)}(x\pm a_1\delta)}
{g^{(n-1)}(x-a_0\delta)}
=
\frac{[\ipr{a_0\pm a_1}{x}]}
{[\ipr{a_1\pm a_2}{x}]}
\frac{g^{(n)}(x\pm a_2\delta)}
{g^{(n-1)}(x-a_0\delta)}
\end{split}
\end{equation}
and hence
\begin{equation}
\begin{split}
g^{(n+1)}(x)
&=
\frac{[\ipr{a_0\pm a_2}{x}-\delta]}
{[\ipr{a_1\pm a_2}{x}]}
\frac{g^{(n)}(x-a_0\delta\pm a_1\delta)}
{g^{(n-1)}(x-2a_0\delta)}
\\
&=
\frac{[\ipr{a_0\pm a_1}{x}-\delta]}
{[\ipr{a_1\pm a_2}{x}]}
\frac{g^{(n)}(x-a_0\delta\pm a_2\delta)}
{g^{(n-1)}(x-2a_0\delta)}.  
\end{split}
\end{equation}
From these two expressions of $g^{(n+1)}(x)$ 
we obtain 
\begin{equation}
\begin{split}
g^{(n+1)}(x\pm a_2\delta)
&=
\frac{[\ipr{a_0\pm a_2}{x}][\ipr{a_0\pm a_2}{x}-2\delta]}
{[\ipr{a_1\pm a_2}{x}\pm\delta]}
\frac{g^{(n)}(x-a_0\delta\pm a_1\delta\pm a_2\delta)
}
{g^{(n-1)}(x-2a_0\delta\pm a_2\delta)}
\\
g^{(n+1)}(x\pm a_1\delta)
&=
\frac{[\ipr{a_0\pm a_1}{x}][\ipr{a_0\pm a_1}{x}-2\delta]}
{[\ipr{a_1\pm a_2}{x}\pm\delta]}
\frac{g^{(n)}(x-a_0\delta\pm a_1\delta\pm a_2\delta)
}
{g^{(n-1)}(x-2a_0\delta\pm a_1\delta)}. 
\end{split}
\end{equation}
Then by \eqref{eq:condgamma} for $g^{(n-1)}(x)$ 
we obtain 
\begin{equation}
\begin{split}
\frac{g^{(n+1)}(x\pm a_1\delta)
}{g^{(n+1)}(x\pm a_2\delta)}
&=
\frac{[\ipr{a_0\pm a_1}{x}][\ipr{a_0\pm a_1}{x}-2\delta]}
{[\ipr{a_0\pm a_2}{x}][\ipr{a_0\pm a_2}{x}-2\delta]}
\frac{g^{(n-1)}(x-2a_0\delta\pm a_2\delta)}{g^{(n-1)}(x-2a_0\delta\pm a_1\delta)}
\\
&=
\frac{[\ipr{a_0\pm a_1}{x}]}
{[\ipr{a_0\pm a_2}{x}]}
\end{split}
\end{equation}
as desired. 
\end{proof}
\begin{proof}{Proof of Theorem \ref{thm:4B}}
Using the gauge factors $g^{(n)}(x)$ defined as above, 
we set 
\begin{equation}
\tau^{(0)}(x)=g^{(0)}(x),\quad
\tau^{(1)}(x)=g^{(1)}(x)\psi(x), 
\end{equation}
and define $K^{(n)}(x)$ by 
\begin{equation}
\tau^{(n)}(x)=g^{(n)}(x)K^{(n)}(x)\quad(n=0,1,2,\ldots).
\end{equation}
Then the bilinear equation \eqref{eq:Todatype} is written as 
\begin{equation}
\begin{split}
&[\ipr{a_1\pm a_2}{x}]
g^{(n-1)}(x-a_0\delta)g^{(n+1)}(x+a_0\delta)
\\
&\quad\cdot
K^{(n-1)}(x-a_0\delta)K^{(n+1)}(x+a_0\delta)
\\
&
=
[\ipr{a_0\pm a_2}{x}]g^{(n)}(x\pm a_1\delta)
K^{(n)}(x\pm a_1\delta)
\\
&\quad\mbox{}
-[\ipr{a_0\pm a_1}{x}]
g^{(n)}(x\pm a_2\delta)
K^{(n)}(x\pm a_2\delta). 
\end{split}
\end{equation}
By Lemma \ref{lem:gauge}, we have
\begin{equation}
\begin{split}
&[\ipr{a_1\pm a_2}{x}]
g^{(n-1)}(x-a_0\delta)g^{(n+1)}(x+a_0\delta)
\\
&=
[\ipr{a_0\pm a_2}{x}]g^{(n)}(x\pm a_1\delta)
=
[\ipr{a_0\pm a_1}{x}]
g^{(n)}(x\pm a_2\delta).  
\end{split}
\end{equation}
Therefore, the main factors $K^{(n)}(x)$ are determined by 
\begin{equation}\label{eq:recKn}
K^{(n-1)}(x-a_0\delta)K^{(n+1)}(x+a_0\delta)
=
K^{(n)}(x\pm a_1\delta)
-
K^{(n)}(x\pm a_2\delta)
\end{equation}
for $n=1,2,\ldots$
starting from $K^{(0)}(x)=1$, $K^{(1)}(x)=\psi(x)$. 
For example, we have
\begin{equation}
\begin{split}
&K^{(2)}(x+a_0\delta)=\psi(x\pm a_1\delta)-\psi(x\pm a_2\delta)=
\det\begin{bmatrix}
\psi(x+a_1\delta) & \psi(x+a_2\delta)\\[2pt]
\psi(x-a_2\delta) & \psi(x-a_1\delta)
\end{bmatrix},
\\
&K^{(3)}(x+2a_0\delta)
=
\arraycolsep=2pt
\det\begin{bmatrix}
\psi(x+2a_1\delta) & \psi(x+a_1\delta+a_2\delta) 
& \psi(x+2a_2\delta)\\[2pt]
\psi(x+a_1\delta-a_2\delta) & \psi(x) & \psi(x-a_1\delta+a_2\delta)\\[2pt]
\psi(x-2a_2\delta) & \psi(x-a_1\delta-a_2\delta) 
& \psi(x-2a_1\delta)
\end{bmatrix}. 
\end{split}
\end{equation}
In general, this recurrence \eqref{eq:recKn}
for $K^{(n)}(x)$ is solved by the 
Lewis Carroll formula for the Casorati determinants 
with respect to the two directions $a_1+a_2$ and 
$a_1-a_2$.  Namely, 
for $n=1,2,\ldots$, we have
\begin{equation}
K^{(n)}(x+(n-1)a_0\delta)=
\det\big(\psi(x+(n+1-i-j)a_1\delta+(j-i)a_2\delta)\big)_{i,j=1}^{n}
\end{equation}
with the vectors 
$v_{ij}=(n+1-i-j)a_1+(j-i)a_2$ $(i,j=1,\ldots,n)$ 
arranged as follows.   
\begin{equation}
\begin{picture}(140,140)
\unitlength=1.2pt
\put(0,60){\line(1,0){120}}
\put(60,0){\line(0,1){120}}
\put(58,58){$\bullet$}
\multiput(28,58)(15,15){3}{$\bullet$}
\multiput(43,43)(15,15){3}{$\bullet$}
\multiput(58,28)(15,15){3}{$\bullet$}
\multiput(13,58)(15,15){4}{$\bullet$}
\multiput(28,43)(15,15){4}{$\bullet$}
\multiput(43,28)(15,15){4}{$\bullet$}
\multiput(58,13)(15,15){4}{$\bullet$}
\multiput(15,60)(2,2){22}{\line(1,0){1}}
\multiput(30,60)(2,2){15}{\line(1,0){1}}
\multiput(45,60)(2,2){8}{\line(1,0){1}}
\multiput(60,15)(2,2){22}{\line(1,0){1}}
\multiput(60,30)(2,2){15}{\line(1,0){1}}
\multiput(60,45)(2,2){8}{\line(1,0){1}}
\multiput(15,60)(2,-2){22}{\line(1,0){1}}
\multiput(30,60)(2,-2){15}{\line(1,0){1}}
\multiput(45,60)(2,-2){8}{\line(1,0){1}}
\multiput(60,105)(2,-2){22}{\line(1,0){1}}
\multiput(60,90)(2,-2){15}{\line(1,0){1}}
\multiput(60,75)(2,-2){8}{\line(1,0){1}}
\put(74,53){\small $a_1$}
\put(52,71){\small $a_2$}
\put(108,65){\small $v_{11}$}
\put(93,37){\small $v_{21}$}
\put(78,22){\small $v_{31}$}
\put(63,7){\small $v_{41}$}
\put(93,80){\small $v_{12}$}
\put(78,95){\small $v_{13}$}
\put(63,110){\small $v_{14}$}
\end{picture}
\end{equation}
This implies the expression \eqref{eq:2dirCasorati} 
for $K^{(n)}(x)$. 
\end{proof}
\section{Elliptic hypergeometric integrals}
\label{sec:5}
In this section, we recall fundamental facts concerning 
the elliptic hypergeometric integrals 
of Spiridonov \cite{Spiridonov2003,Spiridonov2005} and 
Rains \cite{Rains2005,Rains2010}. 
\par\medskip
Fixing two bases $p,q\in\bC^\ast=\bC\backslash\br{0}$ 
with $|p|<1$, $|q|<1$, we use the multiplicative notations 
\begin{equation}
\theta(z;p)=(z;p)_\infty(p/z;p)_\infty,\qquad
(z;p)_\infty=\prod_{i=0}^{\infty}(1-p^iz), 
\end{equation}
for the {\em Jacobi theta function}, 
and 
\begin{equation}
\Gamma(z;p,q)=\frac{(pq/z;p,q)_\infty}{(z;p,q)_\infty},
\qquad
(z;p,q)_\infty=\prod_{i,j=0}^{\infty}(1-p^iq^jz)
\end{equation}
for the {\em Ruijsenaars elliptic gamma function}. 
These functions satisfy the functional equations
\begin{equation}
\begin{split}
&
\theta(pz;p)=-z^{-1}\theta(z;p),\quad \theta(p/z;p)=\theta(z;p), 
\\
&
\Gamma(qz;p,q)=\theta(z;p)\Gamma(z;p,q),
\qquad
\Gamma(pq/u;p,q)=\Gamma(u;p,q)^{-1}. 
\end{split}
\end{equation}
The multiplicative theta function $\theta(z;p)$ satisfies
the three-term relation
\begin{equation}\label{eq:three-term-theta}
c\,\theta(bc^{\pm1};p)\theta(az^{\pm1};p)+
a\,\theta(ca^{\pm1};p)\theta(bz^{\pm1};p)+
b\,\theta(ab^{\pm1};p)\theta(cz^{\pm1};p)=0, 
\end{equation}
corresponding to \eqref{eq:three-term}. 
Here we have used the abbreviation 
$\theta(ab^{\pm1};p)=\theta(ab;p)\theta(ab^{-1};p)$ 
to refer to the product of two factors with different signs. 
Note also that
\begin{equation}
\frac{1}{\Gamma(z^{\pm1};p,q)}=(1-z^{\pm1})(pz^{\pm1};p)_\infty
(qz^{\pm1};q)_\infty=-z^{-1}\theta(z;p)\theta(z;q). 
\end{equation}

Following Spiridonov \cite{Spiridonov2003,Spiridonov2005}, 
we consider the elliptic hypergeometric integral 
$I(u;p,q)$ in eight variables $u=(u_0,u_1,\ldots,u_7)$ 
defined by
\begin{equation}\label{eq:ehi8}
I(u;p,q)=
\frac{(p;p)_\infty(q;q)_\infty}{4\pi\sqrt{-1}}
\int_{C}
\frac{\prod_{k=0}^{7}\Gamma(u_kz^{\pm1};p,q)}{\Gamma(z^{\pm2};p,q)}\dfrac{dz}{z}. 
\end{equation}
Here we assume that $u=(u_0,u_1,\ldots,u_7)$ is generic 
in the sense that 
$u_ku_l\notin p^{-\bN}q^{-\bN}$ for any 
$k,l\in\br{0,1,\ldots,7}$ ($\bN=\br{0,1,2,\ldots}$).  
This condition is equivalent to saying that the two sets 
\begin{equation}
\begin{split}
S_0&=\brm{p^iq^ju_k}{i,j\in\bN,\ k\in\br{0,1,\ldots,7}},
\\
S_\infty&=\brm{p^{-i}q^{-j}u_k^{-1}}
{i,j\in\bN,\ k\in\br{0,1,\ldots,7}}
\end{split}
\end{equation}
of possible poles of the integrand are disjoint.  
For the contour $C$ we take a homology cycle in 
$\bC^\ast\backslash(S_0\cup S_\infty)$ 
such that $n(C,a)=1$ for all $a\in S_0$ and 
$n(C,a)=0$ for all $a\in S_\infty$, 
where $n(C;a)$ stands for the winding number of 
$C$ around $z=a$.  
Note also that, if $|u_k|<1$ $(k=0,1,\ldots,7)$, one can 
take the unit circle $|z|=1$ oriented positively as the cycle 
$C$. 

The following transformation formulas are 
due to Spiridonov \cite{Spiridonov2003} and Rains \cite{Rains2010}. 
\begin{thm}\label{thm:5A}
Suppose that the parameters $u=(u_0,u_1,\ldots,u_7)$ 
satisfy the
balancing condition $u_0u_1\cdots u_7=p^2q^2$.
Then the following transformation formulas hold\,$:$
\begin{equation}
\label{eq:Bailey1}
\begin{split}
(1)\quad&
I(u;p,q)=I(\widetilde{u};p,q)
\prod_{0\le i<j\le3}\Gamma(u_iu_j;p,q)
\prod_{4\le i<j\le7}\Gamma(u_iu_j;p,q), 
\\
&\widetilde{u}=
(\widetilde{u}_0,\widetilde{u}_1,\ldots,\widetilde{u}_7),
\quad
\widetilde{u}_i=
\begin{cases}
u_i\sqrt{pq/u_0u_1u_2u_3}\quad&(i=0,1,2,3),\\
u_i\sqrt{pq/u_4u_5u_6u_7}\quad&(i=4,5,6,7),
\end{cases}
\end{split}
\end{equation}
and
\begin{equation}\label{eq:Bailey2}
\begin{split}
(2)\quad&I(u;p,q)=I(\widehat{u};p,q)
\prod_{0\le i<j\le7}\Gamma(u_iu_j;p,q),
\\
&\widehat{u}=(\widehat{u}_0,\ldots,\widehat{u}_7),\quad
\widehat{u}_i=\sqrt{pq}/u_i
\quad(i=0,1,\ldots,7).
\phantom{\quad(i=0,1,\ldots)}
\end{split}
\end{equation}
\end{thm}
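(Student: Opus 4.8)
{Proof}
The plan is to deduce both identities from the univariate elliptic beta integral of Spiridonov~\cite{Spiridonov2003,Spiridonov2005},
\begin{equation*}
\frac{(p;p)_\infty(q;q)_\infty}{4\pi\sqrt{-1}}
\int_{C}
\frac{\prod_{j=1}^{6}\Gamma(t_jz^{\pm1};p,q)}{\Gamma(z^{\pm2};p,q)}\frac{dz}{z}
=\prod_{1\le j<k\le6}\Gamma(t_jt_k;p,q)\qquad(t_1t_2\cdots t_6=pq),
\end{equation*}
which we take as given. One works first in the polydisc $|u_k|<1$ $(k=0,\ldots,7)$, where every contour may be taken to be the positively oriented unit circle and all the multiple integrals below converge absolutely; the general balanced case then follows by analytic continuation in $u$, using that $I(u;p,q)$ is meromorphic in $u$ --- deforming $C$ past the poles of the integrand contributes only residues that are again holomorphic in the parameters --- while the right-hand sides of \eqref{eq:Bailey1} and \eqref{eq:Bailey2} are visibly meromorphic.

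For part $(1)$ I would introduce the \emph{bridged} double integral
\begin{equation*}
J=\bigl(\tfrac{(p;p)_\infty(q;q)_\infty}{4\pi\sqrt{-1}}\bigr)^{2}
\iint_{C\times C}
\frac{\prod_{i=0}^{3}\Gamma(t_iz^{\pm1};p,q)\cdot\Gamma(vz^{\pm1}w^{\pm1};p,q)\cdot\prod_{i=4}^{7}\Gamma(t_iw^{\pm1};p,q)}
{\Gamma(z^{\pm2};p,q)\,\Gamma(w^{\pm2};p,q)}\,\frac{dz}{z}\,\frac{dw}{w},
\end{equation*}
in which the bridge parameter $v$ is fixed by requiring the $z$-integrand, read as a six-parameter integrand with parameters $t_0,t_1,t_2,t_3,vw,v/w$, to be balanced, i.e.\ $t_0t_1t_2t_3\,v^{2}=pq$. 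The crucial point is that the $w$-integrand, with parameters $t_4,t_5,t_6,t_7,vz,v/z$, is \emph{simultaneously} balanced precisely when $t_4t_5t_6t_7\,v^{2}=pq$; eliminating $v$, this amounts to $t_0t_1t_2t_3=t_4t_5t_6t_7$, which --- under the substitution $u=(t_0v,t_1v,t_2v,t_3v,t_4,t_5,t_6,t_7)$ --- is exactly the balancing condition $u_0u_1\cdots u_7=p^{2}q^{2}$. Evaluating $J$ by Fubini, doing the $z$-integral first via the beta integral produces $\prod_{0\le i<j\le3}\Gamma(t_it_j;p,q)\,\Gamma(v^{2};p,q)$ times $I(u;p,q)$ (the surviving $w$-integral being $I$ at the parameters $t_0v,t_1v,t_2v,t_3v,t_4,t_5,t_6,t_7$, which are the $u_k$), while doing the $w$-integral first produces $\prod_{4\le i<j\le7}\Gamma(t_it_j;p,q)\,\Gamma(v^{2};p,q)$ times $I(t_0,t_1,t_2,t_3,vt_4,vt_5,vt_6,vt_7;p,q)$. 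Since $v^{2}=u_0u_1u_2u_3/(pq)$ --- hence also $v^{2}=pq/(u_4u_5u_6u_7)$ by balancing --- this last vector equals the $\widetilde u$ of the statement. Cancelling the common $\Gamma(v^{2};p,q)$ and using the reflection formula $\Gamma(pq/z;p,q)=\Gamma(z;p,q)^{-1}$ to rewrite the $z$-first factor as $\prod_{0\le i<j\le3}\Gamma(t_it_j;p,q)=\prod_{0\le i<j\le3}\Gamma(u_iu_j/v^{2};p,q)=\prod_{0\le i<j\le3}\Gamma\bigl(pq/(u_ku_l)\bigr)=\prod_{0\le i<j\le3}\Gamma(u_ku_l;p,q)^{-1}$, with $\{k,l\}=\br{0,1,2,3}\setminus\{i,j\}$, turns the identity $J=J$ into precisely \eqref{eq:Bailey1}. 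The sign ambiguity in $v=\sqrt{u_0u_1u_2u_3/(pq)}$ is immaterial, since flipping it sends $\widetilde u\mapsto-\widetilde u$ while $I(-u;p,q)=I(u;p,q)$ by the substitution $z\mapsto-z$.

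For part $(2)$ I would use that $I(u;p,q)$ is a \emph{symmetric} function of $u_0,\ldots,u_7$: the integrand and the cycle $C$ depend only on the unordered parameter set, so $\mathfrak{S}_8$ acts trivially on $I$. Parametrizing the balanced variety by $u_i=(pq)^{1/4}e(x_i)$ (for a fixed fourth root) with $x=(x_0,\ldots,x_7)$ in the hyperplane $\ipr{\phi}{x}=0$ of $V$ --- i.e.\ in $Q(E_7)\otimes\bC$ in the realization of Section~\ref{sec:1} --- the permutations of $u_0,\ldots,u_7$ become the permutations of $v_0,\ldots,v_7$ ($\mathfrak{S}_8\subset W(E_7)$), while \eqref{eq:Bailey1} for the partition $\br{0,1,2,3}\,|\,\br{4,5,6,7}$ realizes the simple reflection $s_0=r_{\alpha_0}$, $\alpha_0=\phi-v_0-v_1-v_2-v_3$. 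Since $\mathfrak{S}_8$ together with $s_0$ generate $W(E_7)$ (as recalled in Section~\ref{sec:1}), and since $\widehat u_i=\sqrt{pq}/u_i$ is realized in these coordinates by $x\mapsto-x$, the longest element $w_0=-\mathrm{id}$ of $W(E_7)$, formula \eqref{eq:Bailey2} follows by composing finitely many instances of \eqref{eq:Bailey1} (for the various $4{+}4$ partitions, each a permutation-conjugate of $s_0$) with permutations; the elliptic gamma factors picked up along a word for $w_0$ telescope, via $\Gamma(z;p,q)\Gamma(pq/z;p,q)=1$, to $\prod_{0\le i<j\le7}\Gamma(u_iu_j;p,q)$. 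Alternatively, \eqref{eq:Bailey2} has a direct proof by the same bridged-double-integral device, with $\Gamma(vz^{\pm1}w^{\pm1};p,q)$ replaced by a kernel coupling all eight parameters across the two integration variables.

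The main obstacle is not an isolated computation but the interplay of three ingredients needed to make the above rigorous: (i) keeping the parameter count in $J$ honest, so that ``both inner integrands balanced'' is genuinely equivalent to the global balancing condition and no spurious constraint is introduced; (ii) the Fubini interchange, which forces one to work in the polydisc where the unit-circle contours are legitimate and $J$ converges absolutely; and (iii) the analytic continuation in $u$ back to the full balanced variety, which rests on the meromorphy of $I(u;p,q)$. Once these are settled, what remains is bookkeeping of elliptic gamma functions, streamlined throughout by $\Gamma(pq/z;p,q)=\Gamma(z;p,q)^{-1}$ and, where theta identities enter, by the three-term relation \eqref{eq:three-term-theta}.
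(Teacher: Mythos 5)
The paper offers no proof of Theorem \ref{thm:5A}: both formulas are simply quoted from Spiridonov \cite{Spiridonov2003} and Rains \cite{Rains2010}, and the only argument it supplies is the later remark in Section \ref{sec:6} that \eqref{eq:Bailey2} follows from \eqref{eq:Bailey1} because $u\mapsto\widehat{u}$ is the action of $w=r_{07}r_{12}r_{34}r_{56}r_{0127}r_{0347}r_{0567}\in W(E_7)$, a composite of permutations and conjugates of $s_0$. Your proposal, by contrast, actually proves the theorem, and it is correct in outline. For (1) you reconstruct Spiridonov's original derivation: the double integral with bridge kernel $\Gamma(vz^{\pm1}w^{\pm1};p,q)$, evaluated in the two orders via the univariate elliptic beta integral; the bookkeeping checks out --- the two inner balancing conditions $t_0t_1t_2t_3v^2=pq$ and $t_4t_5t_6t_7v^2=pq$ are jointly equivalent to $u_0\cdots u_7=p^2q^2$ with $v^2=u_0u_1u_2u_3/pq$, the common factor $\Gamma(v^2;p,q)$ cancels, and the reflection formula turns $\prod_{0\le i<j\le3}\Gamma(t_it_j;p,q)$ into $\prod_{0\le i<j\le3}\Gamma(u_iu_j;p,q)^{-1}$ --- and your handling of the sign of $v$ is fine. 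One small adjustment: the region where the unit circles are legitimate for the double integral is governed by $|t_i|<1$ and $|v|<1$ rather than by the polydisc $|u_k|<1$; this is harmless given the analytic continuation you invoke. For (2) your Weyl-group argument coincides with the paper's own remark ($\mathfrak{S}_8$ and $s_0$ generate $W(E_7)$, $\widehat{u}$ realizes the longest element acting as $-\mathrm{id}$ on the $\phi$-orthogonal hyperplane, and the gamma factors telescope via $\Gamma(pq/z;p,q)=\Gamma(z;p,q)^{-1}$); the only step left as an assertion is that telescoping, which does require fixing a concrete factorization of the longest element (e.g.\ the paper's $w$) and verifying the product of prefactors, but this is routine. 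In short, where the paper buys brevity by citation, your argument supplies a self-contained proof modulo the elliptic beta integral evaluation, which is exactly how the cited sources establish the result.
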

Note that, if the parameters 
$u=(u_0,u_1,\ldots,u_7)$ satisfy 
$u_0u_1\cdots u_7=p^2q^2$ and 
$|u_k|=|pq|^{\frac{1}{4}}$ ($k=0,1,\ldots,7$), 
then both $\widetilde{u}$ and $\widehat{u}$ satisfy 
the two conditions as well. 

\par\medskip
Taking another base $r\in\bC^\ast$ with $|r|<1$, we set
\begin{equation}\label{eq:Psi}
\Psi(u;p,q,r)=
I(u;p,q)\,
\prod_{0\le i<j\le 7}\Gamma(u_iu_j;p,q,r), 
\end{equation}
where 
\begin{equation}
\begin{split}
\Gamma(z;p,q,r)&=(z;p,q,r)_\infty(pqr/z;p,q,r)_\infty,
\\
(z;p,q,r)_\infty&=\prod_{i,j,k=0}^{\infty}(1-p^iq^jr^k z).  
\end{split}
\end{equation}
Note that
\begin{equation}
\Gamma(rz;p,q,r)=\Gamma(z;p,q)\Gamma(z;p,q,r),
\quad
\Gamma(pqr/z;p,q,r)=\Gamma(z;p,q,r).  
\end{equation}
\begin{prop}\label{prop:5A}
Under the condition $u_0u_1\cdots u_7=p^2q^2$, 
the function $\Psi(u;p,q,r)$ defined by \eqref{eq:Psi}
is invariant with respect to the 
transformations $u\to\widetilde{u}$ and $u\to\widehat{u}$.  
\end{prop}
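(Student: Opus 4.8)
The plan is to derive Proposition~\ref{prop:5A} directly from the two transformation formulas of Theorem~\ref{thm:5A} together with the functional equations recalled above, the real content being the bookkeeping of products of elliptic gamma functions. The one identity I would isolate at the outset is
\[
\Gamma(pq/z;p,q,r)=\Gamma(z;p,q)\,\Gamma(z;p,q,r),
\]
obtained by applying the $r$-shift relation $\Gamma(rw;p,q,r)=\Gamma(w;p,q)\Gamma(w;p,q,r)$ with $w=pq/z$, the reflection formula $\Gamma(pqr/z;p,q,r)=\Gamma(z;p,q,r)$, and the two-variable reflection $\Gamma(pq/z;p,q)=\Gamma(z;p,q)^{-1}$.

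For the transformation $u\mapsto\widehat{u}$ this case is immediate: since $\widehat{u}_i\widehat{u}_j=pq/(u_iu_j)$, the identity gives $\prod_{0\le i<j\le 7}\Gamma(\widehat{u}_i\widehat{u}_j;p,q,r)=\big(\prod_{0\le i<j\le 7}\Gamma(u_iu_j;p,q,r)\big)\big(\prod_{0\le i<j\le 7}\Gamma(u_iu_j;p,q)\big)$, and substituting $I(\widehat{u};p,q)=I(u;p,q)\big/\prod_{0\le i<j\le 7}\Gamma(u_iu_j;p,q)$ from \eqref{eq:Bailey2} into the definition \eqref{eq:Psi} of $\Psi(\widehat{u};p,q,r)$ makes the surplus $(p,q)$-factors cancel, leaving $\Psi(u;p,q,r)$.

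For $u\mapsto\widetilde{u}$ I would split $\br{0,1,\ldots,7}$ into the blocks $\br{0,1,2,3}$ and $\br{4,5,6,7}$, write $P_1=u_0u_1u_2u_3$ and $P_2=u_4u_5u_6u_7$ (so that $P_1P_2=p^2q^2$ by the balancing condition), and compute $\widetilde{u}_i\widetilde{u}_j$ in three cases. For a cross pair the two square-root factors multiply to $\sqrt{(pq)^2/(P_1P_2)}=1$, hence $\widetilde{u}_i\widetilde{u}_j=u_iu_j$; for $\br{i,j}\subseteq\br{0,1,2,3}$ with complement $\br{k,l}$ one gets $\widetilde{u}_i\widetilde{u}_j=pq\,u_iu_j/P_1=pq/(u_ku_l)$, and symmetrically inside $\br{4,5,6,7}$. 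Because $\br{i,j}\mapsto\br{k,l}$ is an involution on the six pairs of a four-element set, the isolated identity turns $\prod_{0\le i<j\le 3}\Gamma(\widetilde{u}_i\widetilde{u}_j;p,q,r)$ into $\big(\prod_{0\le i<j\le 3}\Gamma(u_iu_j;p,q,r)\big)\big(\prod_{0\le i<j\le 3}\Gamma(u_iu_j;p,q)\big)$, and likewise for the second block, while the cross pairs contribute $\prod\Gamma(u_iu_j;p,q,r)$ unchanged. Collecting terms and inserting $I(\widetilde{u};p,q)=I(u;p,q)\big/\big(\prod_{0\le i<j\le 3}\Gamma(u_iu_j;p,q)\prod_{4\le i<j\le 7}\Gamma(u_iu_j;p,q)\big)$ from \eqref{eq:Bailey1}, the surplus $(p,q)$-factors are precisely the denominator and cancel, so $\Psi(\widetilde{u};p,q,r)=\Psi(u;p,q,r)$.

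The only delicate point — and the one I would write out in full — is matching the $(p,q)$-factors that emerge from the $\Gamma(pq/z;p,q,r)$ reductions against the denominators $\prod\Gamma(u_iu_j;p,q)$ appearing in Theorem~\ref{thm:5A}: this rests on the bijection between a pair in a block and its complement, and on the balancing condition being exactly what is needed to kill the cross-pair square roots. Apart from that, the argument is a direct substitution requiring no analytic input.
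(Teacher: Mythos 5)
Your proposal is correct and follows essentially the same route as the paper: the key identity $\Gamma(pq/z;p,q,r)=\Gamma(z;p,q)\Gamma(z;p,q,r)$ (which the paper obtains via $\Gamma(pq/z;p,q,r)=\Gamma(rz;p,q,r)$) reduces each in-block factor $\Gamma(\widetilde{u}_i\widetilde{u}_j;p,q,r)$ to $\Gamma(u_ku_l;p,q)\Gamma(u_ku_l;p,q,r)$, and the surplus $(p,q)$-factors cancel against the gamma products in Theorem \ref{thm:5A}. Your explicit treatment of the cross pairs and of the complement involution only spells out details the paper leaves implicit.
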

\begin{proof}{Proof}
When $\br{i,j,k,l}=\br{0,1,2,3}$ or $\br{4,5,6,7}$, one has
\begin{equation}
\begin{split}
\Gamma(\widetilde{u}_i\widetilde{u}_j;p,q,r)
&=
\Gamma(pq/u_ku_l;p,q,r)
=
\Gamma(ru_ku_l;p,q,r)
\\
&=\Gamma(u_k,u_l;p,q)\Gamma(u_ku_l;p,q,r).  
\end{split}
\end{equation}
Also, for distinct $i,j\in\br{0,1,\ldots,7}$, 
\begin{equation}
\begin{split}
\Gamma(\widehat{u}_i\widehat{u}_j;p,q,r)
&=\Gamma(pq/u_iu_j;p,q,r)
=\Gamma(ru_iu_j;p,q,r)
\\
&=\Gamma(ru_iu_j;p,q)\Gamma(u_iu_j;p,q,r).
\end{split}
\end{equation}
Using these formulas, 
Theorem \ref{thm:5A} can be reformulated as 
\begin{equation}
\Psi(u;p,q,r)=\Psi(\widetilde{u};p,q,r)=\Psi(\widehat{u};p,q,r)
\end{equation}
under the condition $u_0u_1\cdots u_7=p^2q^2$. 
\end{proof}

\par\medskip
Returning to the elliptic hypergeometric integral 
\eqref{eq:ehi8}, we notice that the integrand
\begin{equation}
H(z,u;p,q)=\frac{\prod_{k=0}^{7}\Gamma(u_kz^{\pm1};p,q)}{\Gamma(z^{\pm2};p,q)}
\end{equation}
satisfies 
\begin{equation}
T_{q,u_k}H(z,u;p,q)=\theta(u_kz^{\pm1};p)H(z,u;p,q)
\end{equation}
with respect to the $q$-shift operator $T_{q,u_k}$ in $u_k$
$(k\in\br{0,1,\ldots,7})$:
\begin{equation}
T_{q,u_k}f(u_0,u_1,\ldots,u_7)=f(u_0,\ldots,qu_k,\ldots,u_7). 
\end{equation}
Hence, by the functional equation 
\eqref{eq:three-term-theta} we have
\begin{equation}
\big(u_k\theta(u_ju_k^{\pm1};p)T_{q,u_i}+
u_i\theta(u_ku_i^{\pm1};p)T_{q,u_j}+
u_j\theta(u_iu_j^{\pm1}p)T_{q,u_k}\big)H(z,u;p,q)=0
\end{equation}
for any triple $i,j,k\in\br{0,1,\ldots,7}$. 
Passing to the integral, we obtain the 
following contiguity relations 
for the elliptic hypergeometric integral.  
\begin{prop}\label{prop:5B}
The elliptic hypergeometric integral \eqref{eq:ehi8} satisfies 
the three-term relation
\begin{equation}\label{eq:contiguityI}
\big(u_k\theta(u_ju_k^{\pm1};p)T_{q,u_i}+
u_i\theta(u_ku_i^{\pm1};p)T_{q,u_j}+
u_j\theta(u_iu_j^{\pm1};p)T_{q,u_k}\big)I(u;p,q)=0
\end{equation}
for any triple $i,j,k\in\br{0,1,\ldots,7}$. 
\end{prop}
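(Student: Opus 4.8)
The plan is to derive Proposition~\ref{prop:5B} by transporting the operator identity for the integrand $H(z,u;p,q)$ through the integral sign. First I would record the basic $q$-difference property $T_{q,u_k}H(z,u;p,q)=\theta(u_kz^{\pm1};p)H(z,u;p,q)$, which follows at once from the functional equation $\Gamma(qz;p,q)=\theta(z;p)\Gamma(z;p,q)$ applied to the factor $\Gamma(u_kz;p,q)\Gamma(u_kz^{-1};p,q)$ in the numerator (the denominator $\Gamma(z^{\pm2};p,q)$ and the other seven factors being unaffected by $T_{q,u_k}$). Then, setting $a=u_iz$, $b=u_jz$, $c=u_kz$ and also $a=u_iz^{-1}$, etc., in the three-term theta relation \eqref{eq:three-term-theta}, I would obtain
\begin{equation*}
u_k\theta(u_ju_k^{\pm1};p)\,\theta(u_iz^{\pm1};p)+
u_i\theta(u_ku_i^{\pm1};p)\,\theta(u_jz^{\pm1};p)+
u_j\theta(u_iu_j^{\pm1};p)\,\theta(u_kz^{\pm1};p)=0
\end{equation*}
after dividing out the common factor $z$ (note $\theta$ is homogeneous of the appropriate degree so the $z$-dependence in the coefficients cancels); multiplying by $H(z,u;p,q)$ then gives the stated annihilation of $H$ by the three-term operator in the $T_{q,u_\bullet}$.

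Next I would integrate this identity in $z$ over the contour $C$ against $\dfrac{dz}{z}$ and pull the shift operators $T_{q,u_i},T_{q,u_j},T_{q,u_k}$ outside the integral, using that they act only on the parameters $u$ and not on $z$ or on the measure. The coefficients $u_k\theta(u_ju_k^{\pm1};p)$ etc.\ are likewise independent of $z$, so they too come out of the integral, and one is left precisely with \eqref{eq:contiguityI} (the normalizing constant $\frac{(p;p)_\infty(q;q)_\infty}{4\pi\sqrt{-1}}$ is a harmless overall factor). The main point requiring care — and the step I expect to be the real obstacle — is the justification of interchanging $T_{q,u_k}$ with $\int_C$: shifting $u_k\mapsto qu_k$ moves the pole locus $S_0$ (and $S_\infty$), so the fixed cycle $C$ chosen for the original parameters need not separate $S_0$ from $S_\infty$ for the shifted parameters.

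To handle this I would argue as in the standard theory of elliptic hypergeometric integrals: for $u$ in the region $|u_k|<1$ ($k=0,\dots,7$), the unit circle $|z|=1$ can be taken as the contour simultaneously for $u$ and for $qu_k$ (since $|q|<1$ implies $|qu_k|<1$ as well), so the interchange is immediate there and \eqref{eq:contiguityI} holds on that open set. Since $I(u;p,q)$ extends to a meromorphic function of $u$ on all of $(\bC^\ast)^8$ (by deforming the contour past the poles that cross it, picking up residues — a fact established in Section~\ref{sec:5} and in Rains \cite{Rains2010}), both sides of \eqref{eq:contiguityI} are meromorphic in $u$, and an identity of meromorphic functions valid on a nonempty open set holds identically. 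This analytic-continuation step is what makes the contour subtlety disappear; the rest is the purely formal manipulation of theta functions above. I would present the proof in the order: (i) the $T_{q,u_k}$-eigenfunction property of $H$; (ii) specialization of \eqref{eq:three-term-theta}; (iii) integration and, in the domain $|u_k|<1$, interchange of $T_{q,u_\bullet}$ with $\int_C$; (iv) meromorphic continuation in $u$ to conclude in general.
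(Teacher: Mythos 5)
Your proposal follows essentially the same route as the paper: establish the eigenfunction property $T_{q,u_k}H=\theta(u_kz^{\pm1};p)H$, specialize the three-term relation \eqref{eq:three-term-theta} to get the annihilation of the integrand, and pass to the integral. The extra step you supply on justifying the interchange of $T_{q,u_\bullet}$ with $\int_C$ (contour choice for $|u_k|<1$ plus meromorphic continuation) is a point the paper leaves implicit, and your treatment of it is correct.
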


\par\medskip
It is known that the elliptic hypergeometric integral 
$I(u;p,q)$ of 
\eqref{eq:ehi8} gives rise to terminating elliptic 
hypergeometric series 
in the special cases 
where $pq/u_ku_l=p^{-M}q^{-N}$ 
for distinct $k,l\in\br{0,1,\ldots,7}$ and 
$M,N\in\bN$ (see for example Komori \cite{Komori2005}). 
Here we give a remark on the case where 
$pq/u_0u_7=q^{-N}$ for simplicity. 
We use the notation of very well-poised elliptic hypergeometric 
series
\begin{equation}
\begin{split}
&{}_{12}V_{11}(a_0;a_1,\ldots,a_7;q,p)
=\sum_{k=0}^{\infty}
\frac{\theta(q^{2k}a_0;p)}{\theta(a_0;p)}
\left(\prod_{i=0}^{7}
\frac{\theta(a_i;p;q)_{k}}{\theta(qa_0/a_i;p;q)_{k}}
\right)q^k, 
\\
&\theta(z;p;q)_{k}=
\frac{\Gamma(q^kz;p,q)}{\Gamma(z;p,q)}=
\theta(z;p)\theta(qz;p)\cdots \theta(q^{k-1}z;p)
\quad(k=0,1,2,\ldots),
\end{split}
\end{equation}
assuming that $a_i\in p^{\bZ}q^{-N}$ 
for some $i\in\br{0,1,\ldots,7}$ and $N=0,1,2,\ldots$. 
\begin{prop}
Under the balancing condition $u_0u_1\cdots u_7=q^2$,
we assume either $q/u_0u_i=q^{-N}$ for some $i\in\br{1,\ldots,6}$ or $q/u_0u_7=pq^{-N}$, where $N=0,1,2,\ldots$. 
Then we have 
\begin{equation}\label{eq:ItoV}
\begin{split}
&I(pu_0,u_1,\ldots,u_{6},pu_7;p,q)
\\
&=\prod_{1\le k<l\le 6}
\Gamma(u_ku_l;p,q)
\frac{\Gamma(q^2/u_0^2;p,q)\Gamma(u_0/u_7;p,q)
}{\prod_{k=1}^{6}\Gamma(qu_i/u_0;p,q)\Gamma(q/u_iu_7;p,q)}
\\
&\quad\cdot
{}_{12}V_{11}(q/u_0^2;q/u_0u_1,\ldots,q/u_0u_6,q/u_0u_7;q,p). 
\end{split}
\end{equation}
\end{prop}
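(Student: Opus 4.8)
The plan is to reduce \eqref{eq:ItoV} to Spiridonov's elliptic beta integral by a residue computation, as in Komori \cite{Komori2005} and Spiridonov \cite{Spiridonov2005}. First I would record the effect of the hypotheses on the shifted parameter vector $u'=(pu_0,u_1,\ldots,u_6,pu_7)$: the balancing $u_0u_1\cdots u_7=q^2$ becomes $u'_0u'_1\cdots u'_7=p^2q^2$, so Theorem \ref{thm:5A} applies, and each of the two cases $q/u_0u_i=q^{-N}$ $(1\le i\le 6)$ and $q/u_0u_7=pq^{-N}$ says exactly that a distinguished pair satisfies $u'_0u'_{\ast}=pq^{N+1}$; consequently the other six parameters satisfy $\prod_{k\ne 0,\ast}u'_k=pq^{1-N}$.

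The key step is to collapse the distinguished pair of gamma factors. From $\Gamma(qz;p,q)=\theta(z;p)\Gamma(z;p,q)$, $\Gamma(pq/z;p,q)=\Gamma(z;p,q)^{-1}$ and $\theta(pz;p)=-z^{-1}\theta(z;p)$ one obtains, when $u'_0u'_{\ast}=pq^{N+1}$,
\begin{equation*}
\Gamma(u'_0z^{\pm1};p,q)\,\Gamma(u'_{\ast}z^{\pm1};p,q)
=c\prod_{m=1}^{N}\theta\big(q^{m}(u'_0)^{-1}z^{\pm1};p\big)
\end{equation*}
with $c$ a $z$-independent constant, so $I(pu_0,u_1,\ldots,u_6,pu_7;p,q)$ becomes the integral over $C$ of this terminating theta-Pochhammer against the six-parameter elliptic beta integrand with parameters $\{u'_k\}_{k\ne 0,\ast}$. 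When $N=0$ the theta-Pochhammer is empty, those six parameters are balanced (product $pq$), and Spiridonov's elliptic beta integral gives $I=\prod\Gamma(u'_ku'_l;p,q)$; matching this against the prefactor of \eqref{eq:ItoV} via $\Gamma(pq/z;p,q)=\Gamma(z;p,q)^{-1}$ settles the case $N=0$, in which ${}_{12}V_{11}$ degenerates to $1$.

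For general $N$ I would either expand the theta-Pochhammer in the Lagrange-type theta basis attached to the nodes $z=u'_0q^{k}$ $(k=0,1,\ldots,N)$ and integrate termwise with the elliptic beta integral, or, equivalently, apply Theorem \ref{thm:5A}(2), which carries our integral to a companion integral in which the distinguished pair has product $q^{-N}$: there the two associated strings of poles pinch the contour, the integral collapses to the sum of the $N+1$ trapped residues, and evaluating each residue with $\Gamma(qz;p,q)=\theta(z;p)\Gamma(z;p,q)$, $\Gamma(pq/z;p,q)=\Gamma(z;p,q)^{-1}$, $\theta(pz;p)=-z^{-1}\theta(z;p)$ reproduces the $k$-th summand $(\cdots)q^{k}$ of ${}_{12}V_{11}(q/u_0^2;q/u_0u_1,\ldots,q/u_0u_7;q,p)$. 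Collecting the gamma-function prefactors coming from Theorem \ref{thm:5A}(2) together with the residue normalizations, and telescoping them with the reflection formula, yields the product standing in front of the series in \eqref{eq:ItoV}. An alternative is induction on $N$: an identity derived from the contiguity relation of Proposition \ref{prop:5B}, restricted to the (generic) locus $u'_0u'_{\ast}=pq^{N+1}$, matches the classical three-term contiguous relation of the terminating ${}_{12}V_{11}$, so that the formula propagates from $N=0$. In either route the main obstacle is the bookkeeping: determining precisely which string of poles is trapped and with which winding numbers, and then matching the resulting product of theta-Pochhammers against the exact normalization and argument of ${}_{12}V_{11}$, together with the telescoping of the gamma-function prefactor — routine but lengthy manipulations with the functional equations.
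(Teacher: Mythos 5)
Your proposal is correct and, in its second variant (apply the reflection $u\mapsto\sqrt{pq}/u$ of Theorem \ref{thm:5A}(2) so that the distinguished pair has product $q^{-N}$, let the two strings of poles pinch the contour, and pick up the $N+1$ residues that reassemble into the terminating ${}_{12}V_{11}$), it is essentially the paper's own argument. The only cosmetic difference is that the paper first treats the pair $(u_0,u_7)$ and then transfers to the cases $pq/u_0u_i=q^{-N}$, $i\in\{1,\ldots,6\}$, via the $p$-shift invariance of ${}_{12}V_{11}$, whereas you handle all cases uniformly from the start.
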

\begin{proof}{Sketch of proof}
Under the balancing condition $u_0u_1\cdots u_7=p^2q^2$, 
we set $t=(t_0,t_1,\ldots,t_7)$, 
$t_i=\sqrt{pq}/u_i$ $(i=0,1,\ldots,7)$, so that 
\begin{equation}
I(u;p,q)=I(t;p,q)\prod_{0\le k<l\le 7}\Gamma(u_ku_l;p,q)
=\frac{I(t;p,q)}{\prod_{0\le k<l\le 7}\Gamma(t_kt_l;p,q)}. 
\end{equation}
We investigate the behavior of the both sides in 
the limit as $pq/u_0u_7\to q^{-N}$, 
namely, $t_0t_7\to q^{-N}$.  In this limit 
$u_7\to pq^{1+N}/u_0$, $t_7\to q^{-N}/t_0$, 
the integral $I(u;p,q)$ on the left-hand side 
has a finite limit, 
while $I(t;p,q)$ gives rise to singularities due 
to pinching of the contour at
\begin{equation}
q^k t_7\to q^{-N+k}/t_0,\quad
q^{-k}t_7^{-1}\to q^{N-k}t_0\quad(k=0,1,\ldots,N). 
\end{equation}
By the residue calculus around these points, we can compute 
the limit 
\begin{equation}
\begin{split}
&\lim_{t_7\to q^{-N}t_0}\frac{I(t;p,q)}
{\prod_{0\le k<l\le 7}\Gamma(t_kt_l;p,q)}
\\
&=\frac{1}{\prod_{0\le i<j\le 6}\Gamma(t_it_j;p,q)}
\prod_{\nu=0}^{N-1}
\frac{\prod_{i=1}^{6}\theta(q^{-N+\nu}t_i/t_0;p)}
{\theta(q^{-N+\nu}/t_0^2;p)}
\\
&\quad\cdot
{}_{12}V_{11}(pt_0^2;t_0t_1,\ldots,t_0t_6,pq^{-N};q,p).  
\end{split}
\end{equation}
Hence, under the conditions 
$u_0u_1\cdots u_7=p^2q^2$ 
and $pq/u_0u_7=q^{-N},$ we obtain 
\begin{equation}\label{eq:sameformula}
\begin{split}
I(u;p,q)&=\Gamma(p^2q^2/u_0^2;p,q)
\prod_{i=1}^{7}\Gamma(u_0/u_i;p,q)\prod_{1\le i<j\le 7}\Gamma(u_iu_j;p,q)
\\
&\quad\cdot
{}_{12}V_{11}(p^2q/u_0^2;pq/u_0u_1,\ldots,pq/u_0u_6,
p^2q/u_0u_7;q,p). 
\end{split}
\end{equation}
Noting that ${}_{12}V_{11}(a_0;a_1,\ldots,a_7;q,p)$ 
is invariant under the $p$-shift 
operator $T_{p,a_i}T_{p,a_j}^{-1}$
for distinct $i,j\in\br{1,\ldots,7}$, we see that the 
same formula \eqref{eq:sameformula} holds 
if we replace the condition ``$pq/u_0u_7=q^{-N}$''
by ``$pq/u_0u_i=q^{-N}$
for some $i\in\br{1,\ldots,6}$''. 
Then, 
replacing $u_0$, $u_7$ by $pu_0$, $pu_7$ respectively, 
we obtain \eqref{eq:ItoV}. 
\end{proof}
\section{\boldmath $W(E_7)$-invariant hypergeometric $\tau$-function}
\label{sec:6}
In the following, we present an explicit hypergeometric 
$\tau$-function for the ORG system of type $E_8$ 
in terms of elliptic hypergeometric integrals. 

\par\medskip
We denote by $x=(x_0,x_1,\ldots,x_7)$ the canonical 
coordinates of $V=\bC^8$ so that
\begin{equation}
x=(x_0,x_1,\ldots,x_7)=x_0v_0+\cdots+x_7v_7;
\quad x_i=\ipr{v_i}{x}\quad(i=0,1,\ldots,7). 
\end{equation}
Note that the highest root $\phi=\hf(v_0+v_1+\cdots+v_7)$ 
of $\Delta(E_8)$ corresponds to 
the linear function
\begin{equation}
\ipr{\phi}{x}=\hf(x_0+x_1+\cdots+x_7). 
\end{equation}
We relate the additive coordinates $x=(x_0,x_1,\ldots,x_7)$ 
and the multiplicative coordinates $u=(u_0,u_1,\ldots,u_7)$ 
through $u_i=e(x_i)=e^{2\pi\sqrt{-1}x_i}$  ($i=0,1,\ldots,7$). 
We also use the notation of exponential functions 
\begin{equation}
u^{\lambda}=e(\ipr{\lambda}{x})\quad(\lambda\in P),
\end{equation}
so that 
$u^{v_i}=u_i$ ($i=0,1,\ldots,7$)
and $u^\phi=(u_0u_1\cdots u_7)^{\shf}$. 

We now consider the case where the fundamental function 
$[\zeta]$ $(\zeta\in\bC)$ is quasi-periodic with respect to the 
$\Omega=\bZ1\oplus\bZ\varpi$, $\Im(\varpi)>0$, 
and is expressed as 
\begin{equation}
[\zeta]=z^{-\hf}\theta(z;p),\quad z=e(\zeta)
\end{equation}
with base $p=e(\varpi)$, $|p|<1$.  
This function has the quasi-periodicity 
\begin{equation}
[\zeta+1]=-[\zeta], \quad 
[\zeta+\varpi]=-e(-\zeta-\tfrac{\varpi}{2})[\zeta]
\end{equation}
and hence $\eta_{1}=0$, $\eta_\varpi=-1$.  
Note also that 
\begin{equation}
[\alpha\pm\beta]=a^{-1}\theta(ab^{\pm1};p)
\quad(a=e(\alpha), b=e(\beta)),
\end{equation}
and that 
the three-term relation \eqref{eq:three-term-theta} for 
$\theta(z;p)$ corresponds to \eqref{eq:three-term} for 
$[\zeta]$. 
As to the constant $\delta\in\bC$, we 
assume $\Im(\delta)>0$, and set $q=e(\delta)$ so that 
$|q|<1$. 

As in Section \ref{sec:1}, we take the simple roots
\begin{equation}
\alpha_0=\phi-v_0-v_1-v_2-v_3,\quad \alpha_j=v_j-v_{j+1}
\quad(j=1,\ldots,6)
\end{equation}
for the root system $\Delta(E_7)$.  Since $v_1-v_0$ is 
the highest root, we see that the Weyl group $W(E_7)$ 
is generated by 
$\mathfrak{S}_8=\la r_{v_j-v_{j+1}} (j=0,1,\ldots,6)\ra$
and the reflection $s_0=r_{\alpha_0}$ by 
$\alpha_0=r_{\phi-v_0-v_1-v_2-v_3}$.  
The symmetric group $\mathfrak{S}_8$ 
acts on the coordinates $x=(x_0,x_1,\ldots,x_7)$ 
and $u=(u_0,u_1,\ldots,u_7)$  
through the permutation of indices, 
while $s_0$ acts on the additive coordinates as 
\begin{equation}
s_0(x_i)=
\begin{cases}
x_i+\hf(\ipr{\phi}{x}-x_0-x_1-x_2-x_3)\quad
& (i=0,1,2,3),\\
x_i+\hf(\ipr{\phi}{x}-x_4-x_5-x_6-x_7) \quad
&(i=4,5,6,7), 
\end{cases}
\end{equation}
and on the multiplicative 
coordinates $u_0,u_1,\ldots,u_7$ as 
\begin{equation}
s_0(u_i)=
\begin{cases}
u_i(u^{\phi}/u_0u_1u_2u_3)^{\shf}\quad
& (i=0,1,2,3),\\
u_i(u^{\phi}/u_4u_5u_6u_7)^{\shf}\quad
&(i=4,5,6,7).  
\end{cases}
\end{equation}
We now restrict the coordinates $x_i$
and $u_i$  
to the level set
\begin{equation}
H_{\kappa}=\brm{x\in V}{\ipr{\phi}{x}=\hf(x_0+x_1+\cdots+x_7)=\kappa}
\quad(\kappa\in\bC)
\end{equation}
so that $u^\phi=(u_0u_1\cdots u_7)^{\frac{1}{2}}=e(\kappa)$.  
Then the action of $s_0$ is given by
\begin{equation}
s_0(x_i)=
\begin{cases}
x_i+\hf(\kappa-x_0-x_1-x_2-x_3)\quad
& (i=0,1,2,3),\\
x_i+\hf(\kappa-x_4-x_5-x_6-x_7)\quad
&(i=4,5,6,7). 
\end{cases}
\end{equation}
and by
\begin{equation}
s_0(u_i)=
\begin{cases}
u_i\sqrt{e(\kappa)/u_0u_1u_2u_3}\quad
& (i=0,1,2,3),\\
u_i\sqrt{e(\kappa)/u_4u_5u_6u_7}\quad
&(i=4,5,6,7). 
\end{cases}
\end{equation}
respectively.  
Suppose $\kappa=\varpi+\delta$ so that 
$e(\kappa)=e(\varpi+\delta)=pq$.
In this case, we have $u_0u_1\cdots u_7=p^2q^2$ 
on $H_{\varpi+\delta}$, and 
the action of 
$s_0$ coincides with the 
transformation $u_i\to\widetilde{u}_i$ in \eqref{eq:Bailey1}. 
Proposition \ref{prop:5A} thus implies that the function
\begin{equation}
\Psi(u;p,q,r)=
I(u;p,q)\,
\prod_{0\le k<l\le 7}\Gamma(u_ku_l;p,q,r)
\end{equation}
regarded as a function on $H_{\varpi+\delta}$, 
$u^\phi=pq$, 
is invariant under the action of $s_0$.  
Since $\Psi(u;p,q,r)$ is manifestly symmetric with respect to 
$u=(u_0,u_1,\ldots,u_7)$, we see that $\Psi(u;p,q,r)$ is a 
$W(E_7)$-invariant meromorphic function on 
$H_{\varpi+\delta}$.  
We remark that the transformation $u_i\to \widehat{u}_i$ 
in \eqref{eq:Bailey2} coincides with the action of 
\begin{equation}\label{eq:ww}
w=r_{07}r_{12}r_{34}r_{56}r_{0127}r_{0347}r_{0567}\in W(E_7), 
\end{equation}
where $r_{ij}=r_{v_i-v_j}$ and $r_{ijkl}=r_{\phi-v_i-v_j-v_k-v_l}$. 
This means that 
the transformation formula 
(2) of Theorem \ref{thm:5A} follows 
from (1).

\par\medskip
We rewrite the contiguity relations of \eqref{eq:contiguityI} 
as 
\begin{equation}
\begin{split}
&u_j^{-1}\theta(u_ju_k^{\pm1};p)u_i^{-1}T_{q,u_i}I(u;p,q)+
u_k^{-1}\theta(u_ku_i^{\pm1};p)u_j^{-1}T_{q,u_j}I(u;p,q)
\\
&
\qquad\qquad\quad\mbox{}+
u_i^{-1}\theta(u_iu_j^{\pm1};p)u_k^{-1}T_{q,u_k}I(u;p,q)=0. 
\end{split}
\end{equation}
Since $u_i^{-1}\theta(u_iu_j^{\pm1};p)=[x_i\pm x_j]$, 
this means that
\begin{equation}\label{eq:contiguityIx}
\begin{split}
&[x_j\pm x_k]u_i^{-1}T_{q,u_i}I(u;p,q)+
[x_k\pm x_i]u_j^{-1}T_{q,u_j}I(u;p,q)
\\
&
\qquad\qquad\quad\mbox{}+
[x_i\pm x_j]u_k^{-1}T_{q,u_k}I(u;p,q)=0.
\end{split}
\end{equation}
In view of this formula, we set
\begin{equation}
J(x)=e(-Q(x))I(u;p,q),\quad Q(x)=\tfrac{1}{2\delta}\ipr{x}{x}. 
\end{equation}
Note that 
\begin{equation}
Q(x+a\delta)=Q(x)+\ipr{a}{x}+\hf\ipr{a}{a}\delta
\end{equation}
for any $a\in P$.  
Since 
\begin{equation}
Q(x+v_i\delta)
=Q(x)+\ipr{v_i}{x}+\hf\delta
=Q(x)+x_i+\hf\delta
\end{equation}
we have
\begin{equation}
J(x+v_i\delta)=e(-Q(x))\,q^{-\hf} u_i^{-1}T_{q,u_i}I(u;p,q)
\quad(i\in\br{0,1,\ldots,7}). 
\end{equation}
Hence by \eqref{eq:contiguityIx} 
we obtain the three-term relations
\begin{equation}\label{eq:three-termJ}
[x_j\pm x_k]J(x+v_i\delta)+
[x_k\pm x_i]J(x+v_j\delta)+
[x_i\pm x_j]J(x+v_k\delta)=0, 
\end{equation}
namely
\begin{equation}\label{eq:contiguityJ}
[\ipr{v_j\pm v_k}{x}]J(x+v_i\delta)+
[\ipr{v_k\pm v_i}{x}]J(x+v_j\delta)+
[\ipr{v_i\pm v_j}{x}]J(x+v_k\delta)=0
\end{equation}
for any triple $i,j,k\in\br{0,1,\ldots,7}$. 

\par\medskip
On the basis of these observations, 
we construct a hypergeometric 
$\tau$-function on 
\begin{equation}
D_\varpi=\bigsqcup_{n\in\bZ} H_{\varpi+n\delta}\subset V
\end{equation}
with the initial level $c=\varpi$.
For this purpose we introduce the holomorphic function 
\begin{equation}
\cF(x)=\prod_{0\le i<j\le 7}\Gamma(u_iu_j;p,q,q)\qquad
(x\in V). 
\end{equation}
%
\begin{thm}\label{thm:6A}
There exists a unique hypergeometric $\tau$-function 
$\tau(x)$ on $D_\varpi$ such that 
$\tau^{(n)}(x)=0$ $(n<0)$ and 
\begin{equation}
\begin{split}
\tau^{(0)}(x)&=\cF(x+\phi\delta)=
\prod_{0\le i<j\le 7}\Gamma(qu_iu_j;p,q,q)
\quad (x\in H_{\varpi}), 
\\
\tau^{(1)}(x)&=\cF(x)J(x)
\\
&
=
e(-Q(x))I(u;p,q)
\prod_{0\le i<j\le 7}\Gamma(u_iu_j;p,q,q)
\quad(x\in H_{\varpi+\delta}). 
\end{split}
\end{equation}
Furthermore $\tau(x)$ is a $W(E_7)$-invariant meromorphic 
function on $D_\varpi$. 
\end{thm}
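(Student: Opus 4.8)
The plan is to deduce Theorem~\ref{thm:6A} from Theorem~\ref{thm:4A}, applied with the period $\omega=\varpi\in\Omega$ and with
\[
\tau^{(0)}(x)=\cF(x+\phi\delta)\quad(x\in H_{\varpi}),\qquad \tau^{(1)}(x)=\cF(x)J(x)\quad(x\in H_{\varpi+\delta}).
\]
These are nonzero meromorphic functions: $\cF$ is a finite product of triple elliptic gamma functions, hence holomorphic and $\not\equiv0$, while $I(u;p,q)$ is meromorphic and $\not\equiv0$, so $\tau^{(1)}\not\equiv0$. Thus it remains only to verify the two hypotheses \eqref{eq:4AII1} and \eqref{eq:4AI} of Theorem~\ref{thm:4A}; the theorem then produces the desired hypergeometric $\tau$-function $\tau$ on $D_\varpi$, unique among those with first two components $\tau^{(0)},\tau^{(1)}$. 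Both hypotheses are covariant under $W(E_7)$, and $W(E_7)$ acts transitively on each of $\cC_{3,\I}$ and $\cC_{3,\II_1}$ (Proposition~\ref{prop:3B}), so — once we know that $\tau^{(0)}$ and $\tau^{(1)}$ are themselves $W(E_7)$-invariant — it suffices to verify each hypothesis on a single representative frame; and the same invariance, together with the uniqueness above, will force $\tau$ to be $W(E_7)$-invariant.

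First I would show that $\tau^{(0)}$ and $\tau^{(1)}$ are $W(E_7)$-invariant. For $\tau^{(1)}$, on $H_{\varpi+\delta}$ one has $\tau^{(1)}(x)=e(-Q(x))\,\Psi(u;p,q,q)$ with $\Psi$ as in \eqref{eq:Psi}; by the discussion of Section~\ref{sec:6} (Proposition~\ref{prop:5A}, together with the identification on $H_{\varpi+\delta}$ of $s_0$ with $u\mapsto\widetilde u$ and of $w$ of \eqref{eq:ww} with $u\mapsto\widehat u$ under the balancing $u_0u_1\cdots u_7=p^2q^2$) the function $\Psi(u;p,q,q)$ is $W(E_7)$-invariant there, while $e(-Q(x))$ is $W(E_7)$-invariant because $W(E_7)$ acts by isometries of $\ipr{\cdot}{\cdot}$. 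For $\tau^{(0)}$: since $W(E_7)$ fixes $\phi$, it is enough that $\cF$ be $W(E_7)$-invariant on $H_{\varpi+2\delta}$; there $e(\varpi+2\delta)=pq^2$, so $s_0$ sends each within-block argument $u_iu_j$ (the pair $\br{i,j}$ lying in $\br{0,1,2,3}$ or in $\br{4,5,6,7}$) to $pq^2/(u_ku_l)$ for the complementary pair $\br{k,l}$ and fixes the cross arguments $u_iu_j$ ($i\le3<j$); the functional equation $\Gamma(pq^2/z;p,q,q)=\Gamma(z;p,q,q)$ and the evident $\mathfrak{S}_8$-symmetry then give the invariance.

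For hypothesis \eqref{eq:4AI} I take the coordinate frame $\br{\pm v_0,\pm v_1,\pm v_2}\in\cC_{3,\I}$. From the product form of $\cF$ and $\Gamma(qz;p,q,q)=\Gamma(z;p,q)\Gamma(z;p,q,q)$ one checks that the gauge
\[
G(x)=\cF(x+\phi\delta-v_i\delta)\,\cF(x+v_i\delta)=\prod_{0\le k<l\le7}\Gamma(u_ku_l;p,q,q)\,\Gamma(qu_ku_l;p,q,q)
\]
is independent of $i$, because a pair $\br{k,l}$ contributes the same two gamma factors to $G(x)$ whether or not it contains $i$. Since $\tau^{(0)}(x-v_i\delta)\tau^{(1)}(x+v_i\delta)=G(x)\,J(x+v_i\delta)$, the relation \eqref{eq:4AI} for this frame is precisely $G(x)$ times the three-term relation \eqref{eq:three-termJ} with $(i,j,k)=(0,1,2)$, which is a restatement of the contiguity relation \eqref{eq:contiguityI} of Proposition~\ref{prop:5B}.

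The remaining hypothesis \eqref{eq:4AII1} is where the main work lies. I would take the type-$\II_1$ frame $\br{\pm a_0,\pm a_1,\pm a_2}$ cut out of the $C_8$-frame $A_1$ of Example~\ref{exm:C8frames}, for which $\ipr{\phi}{a_0}=1$ and $a_0+a_1=v_0+v_1$, $a_0-a_1=v_2+v_3$, $a_0+a_2=v_0+v_2$, $a_0-a_2=v_1+v_3$, so that (using $[\alpha\pm\beta]=a^{-1}\theta(ab^{\pm1};p)$) the right-hand side of \eqref{eq:4AII1} equals $\frac{\theta(u_0u_1;p)\theta(u_2u_3;p)}{\theta(u_0u_2;p)\theta(u_1u_3;p)}$. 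One writes $\tau^{(0)}(x\pm a_i\delta)=\cF(x+\phi\delta+a_i\delta)\cF(x+\phi\delta-a_i\delta)$ as an explicit finite product of triple gammas whose arguments differ from those of $\tau^{(0)}(x)$ by integer and half-integer powers of $q$, and reduces it using $\Gamma(qz;p,q,q)=\Gamma(z;p,q)\Gamma(z;p,q,q)$ and $\Gamma(qz;p,q)=\theta(z;p)\Gamma(z;p,q)$. The factors containing $u_4,\dots,u_7$, and all the double-base gamma contributions, come out identical for $a_1$ and for $a_2$ and cancel in the ratio $\tau^{(0)}(x\pm a_1\delta)/\tau^{(0)}(x\pm a_2\delta)$, leaving exactly $\frac{\theta(u_0u_1;p)\theta(u_2u_3;p)}{\theta(u_0u_2;p)\theta(u_1u_3;p)}$. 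Keeping the bookkeeping of the half-integer $q$-shifts produced by such a vector $a_1=\hf(v_0+v_1-v_2-v_3)$ under control, and tracking these cancellations, is the principal obstacle. With \eqref{eq:4AII1} and \eqref{eq:4AI} established, Theorem~\ref{thm:4A} yields $\tau$; and for any $w\in W(E_7)$ the function $w.\tau$ of Theorem~\ref{thm:2B}(2) is a hypergeometric $\tau$-function on $w.D_\varpi=D_\varpi$ with $(w.\tau)^{(n)}=w.\tau^{(n)}$, hence with first two components $\tau^{(0)},\tau^{(1)}$, so the uniqueness part of Theorem~\ref{thm:4A} gives $w.\tau=\tau$, proving that $\tau$ is $W(E_7)$-invariant on $D_\varpi$.
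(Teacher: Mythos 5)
Your proposal is correct and follows essentially the same route as the paper: reduce to Theorem \ref{thm:4A}, establish the $W(E_7)$-invariance of $\tau^{(0)}$ and $\tau^{(1)}$ (via Proposition \ref{prop:5A} and the identification of $s_0$ with $u\mapsto\widetilde{u}$), verify \eqref{eq:4AII1} on the type-$\II_1$ frame from $A_1$ and \eqref{eq:4AI} on $\br{\pm v_0,\pm v_1,\pm v_2}$ using the identity $\cF(x+\phi\delta-v_k\delta)\cF(x+v_k\delta)=\cF(x+\phi\delta)\cF(x)$ together with the contiguity relation for $J$, and then deduce the invariance of $\tau$ from uniqueness. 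The only cosmetic difference is that you check the invariance of $\cF$ on $H_{\varpi+2\delta}$ rather than computing $s_0(\Gamma(qu_iu_j;p,q,q))$ directly on $H_{\varpi}$, which is equivalent.
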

\begin{proof}{Proof}
We need to show that $\tau^{(0)}(x)$ and $\tau^{(1)}(x)$ 
satisfy the two conditions of Theorem \ref{thm:4A}. 
We first show that $\tau^{(0)}(x)$ ($x\in H_{\varpi}$) 
is $W(E_7)$-invariant.  Since the 
$\mathfrak{S}_8$-invariance is manifest, we have only 
to show that it is invariant under the action of $s_0$.  
Noting that 
\begin{equation}
s_0(u_iu_j)=
\begin{cases}
p/u_ku_l & (\br{i,j,k,l}=\br{0,1,2,3}),\\
u_iu_j & (i\in\br{0,1,2,3}, j\in\br{4,5,6,7}),\\
p/u_ku_l & (\br{i,j,k,l}=\br{4,5,6,7})
\end{cases}
\end{equation}
for $x\in H_{\varpi}$, 
we have
\begin{equation}
s_0\big(\Gamma(qu_iu_j;p,q,q)\big)
=\Gamma(pq/u_ku_l;p,q,q)=\Gamma(qu_ku_l;p,q,q)
\end{equation}
for $\br{i,j,k,l}=\br{0,1,2,3}$ or $\br{4,5,6,7}$.  
Since $\Gamma(u_iu_j;p,q,q)$ is $s_0$-invariant 
for $i\in\br{0,1,2,3}$, $j\in\br{4,5,6,7}$, 
we have 
$s_0(\tau^{(0)}(x))=
\tau^{(0)}(x)$. 
We now verify that our $\tau^{(0)}(x)$ satisfy the 
condition \eqref{eq:4AII1} for any $C_3$-frame
$\br{\pm a_0,\pm a_1,\pm a_2}$ of type $\II_1$
as in \eqref{eq:C3frameII}.  
Since the $C_3$-frames of type $\II_1$ form a single 
$W(E_7)$-orbit, by the $W(E_7)$-invariance of $\tau^{(0)}(x)$ 
we may take
\begin{equation}
\begin{split}
&a_0=\hf(v_0+v_1+v_2+v_3),\\
&a_1=\hf(v_0+v_1-v_2-v_3),\quad
a_2=\hf(v_0-v_1+v_2-v_3)
\end{split}
\end{equation}
so that 
\begin{equation}
\begin{split}
\br{a_0\pm a_1}=\br{v_0+v_1,v_2+v_3},
\quad
\br{a_0\pm a_2}=\br{v_0+v_2,v_1+v_3}.
\end{split}
\end{equation}
In this case one can directly check 
\begin{equation}
\frac{\tau^{(0)}(x\pm a_1\delta)}
{\tau^{(0)}(x\pm a_2\delta)}
=
\frac
{\theta(u_0u_1;p)\theta(u_2u_3;p)}
{\theta(u_0u_2;p)\theta(u_1u_3;p)}
=
\frac
{[x_0+x_1][x_2+x_3]}
{[x_0+x_2][x_1+x_3]}=\frac{[\ipr{a_0\pm a_1}{x}]}{[\ipr{a_0\pm a_2}{x}]}. 
\end{equation}

We next verify that $\tau^{(0)}(x)$ and $\tau^{(1)}(x)$ satisfy 
\eqref{eq:4AI} for any $C_3$-frame $\br{\pm a_0,\pm a_1,\pm a_2}$ 
with $\ipr{\phi}{a_i}=\hf$ $(i=0,1,2)$. 
Since $\tau^{(1)}(x)=e(-Q(x))\Psi(u;p,q,q)$ is $W(E_7)$-invariant, 
we have only to check \eqref{eq:4AI} for 
$\br{\pm a_0,\pm a_1,\pm a_2}=\br{\pm v_0,\pm v_1,\pm v_2}$, namely, 
\begin{equation}
\begin{split}
&[x_1\pm x_2]\tau^{(0)}(x-v_0\delta)\tau^{(1)}(x+v_0\delta)
+[x_2\pm x_0]\tau^{(0)}(x-v_1\delta)\tau^{(1)}(x+v_1\delta)
\\
&\qquad\qquad\qquad\mbox{}
+[x_0\pm x_1]\tau^{(0)}(x-v_2\delta)\tau^{(1)}(x+v_2\delta)
=0. 
\end{split}
\end{equation}
Since 
\begin{equation}
\cF(x+\phi\delta-v_k\delta)
\cF(x+v_k\delta)=
\cF(x+\phi\delta)
\cF(x), 
\end{equation}
we have 
\begin{equation}
\begin{split}
\tau^{(0)}(x-v_k\delta)\tau^{(1)}(x+v_k\delta)
&=\cF(x+\phi\delta-v_k\delta)
\cF(x+v_k\delta)J(x+v_k\delta)
\\
&=\cF(x+\phi\delta)
\cF(x)
J(x+v_k\delta)
\end{split}
\end{equation}
for each $k=0,1,\ldots,7$.  Hence the three-term relations 
\eqref{eq:three-termJ} for $J(x)$ imply
\begin{equation}
\begin{split}
&[x_j\pm x_k]\tau^{(0)}(x-v_i\delta)\tau^{(1)}(x+v_i\delta)
+[x_k\pm x_i]\tau^{(0)}(x-v_j\delta)\tau^{(1)}(x+v_j\delta)
\\
&\qquad\qquad\qquad\mbox{}
+[x_i\pm x_j]\tau^{(0)}(x-v_k\delta)\tau^{(1)}(x+v_j\delta)
=0 
\end{split}
\end{equation}
for any tripe $i,j,k\in\br{0,1,\ldots,7}$. 
The $W(E_7)$-invariance of $\tau(x)$ on $D_\varpi$ follows from 
the uniqueness of $\tau(x)$ and the $W(E_7)$-invariance of $\tau^{(0)}(x)$ 
and $\tau^{(1)}(x)$.  
\end{proof}

\par\medskip
We next investigate the determinant formula of 
Theorem \ref{thm:4B} for the hypergeometric $\tau$-function 
of Theorem \ref{thm:6A} with initial condition 
\begin{equation}
\begin{split}
\tau^{(0)}(x)&=
\prod_{0\le i<j\le 7}\Gamma(qu_iu_j;p,q,q)
\quad(x\in H_{\varpi}), 
\\
\tau^{(1)}(x)&=
e(-Q(x))\,I(u;p,q)\,
\prod_{0\le i<j\le 7}\Gamma(u_iu_j;p,q,q)
\quad
(x\in H_{\varpi+\delta}). 
\end{split}
\end{equation}
For the recursive construction of $\tau^{(n)}(x)$ $(x\in H_{\varpi+n\delta}$) for $n=2,3,\ldots$, 
we use the 
$C_8$-frame $A_1=\br{\pm a_0,\pm a_1,\ldots,\pm a_7}$ 
of type $\II$ of Example \ref{exm:C8frames}, where 
\begin{equation}\label{eq:aiA1}
\begin{array}{lll}
&a_0=\hf(v_0+v_1+v_2+v_3),\quad 
a_4=\hf(v_4-v_5-v_6+v_7),\\[2pt]
&a_1=\hf(v_0+v_1-v_2-v_3),\quad
a_5=\hf(-v_4+v_5-v_6+v_7).\\[2pt]
&a_2=\hf(v_0-v_1+v_2-v_3),\quad 
a_6=\hf(-v_4-v_5+v_6+v_7),\\[2pt]
&a_3=\hf(v_0-v_1-v_2+v_3),\quad
a_7=\hf(v_4+v_5+v_6+v_7).
\end{array}
\end{equation}
Note here that 
$\ipr{\phi}{a_0}=\ipr{\phi}{a_7}=1$, $\ipr{\phi}{a_i}=0$ $(i=1,\ldots,6)$ 
and $a_0+a_7=\phi$.   
This $C_8$-frame $A_1$ 
contains the following 30 
$C_3$-frame of type $\II_1$:
\begin{equation}
\br{\pm a_0,\pm a_i,\pm a_j},\quad 
\br{\pm a_7,\pm a_i,\pm a_j}\quad
(1\le i<j\le 6). 
\end{equation}
Since 
\begin{equation}
\alpha_0=\phi-v_0-v_1-v_2-v_3=a_7-a_0, 
\end{equation}
we have 
\begin{equation}
s_0(a_0)=a_7,\quad s_0(a_7)=a_0,\quad
s_0(a_i)=a_i\quad(i=1,\ldots,6).  
\end{equation}
This means that 
the $C_3$-frames 
$\br{\pm a_0,\pm a_i,\pm a_j}$ and
$\br{\pm a_7,\pm a_i,\pm a_j}$
are transformed to each other by $s_0$.  
To fix the idea, we consider below the cases of  
$C_3$-frames 
$\br{\pm a_0,\pm a_1,\pm a_2}$ and 
$\br{\pm a_7,\pm a_1,\pm a_2}$. 

\begin{thm}[Determinant formula]\label{thm:6B}
The $W(E_7)$-invariant 
hypergeometric $\tau$-function 
of Theorem \ref{thm:6A} is expressed in terms 
of 2-directional Casorati determinants as 
\begin{equation}
\tau^{(n)}(x)=g^{(n)}(x) \det\big(\psi^{(n)}_{ij}(x)\big)_{i,j=1}^{n}
\quad(x\in H_{\varpi+n\delta})
\end{equation}
for $n=0,1,2,\ldots$, where the gauge factors 
$g^{(n)}(x)$ and the matrix elements 
$\psi_{ij}^{(n)}(x)$ are given as follows according to the 
$C_3$-frame of type $\II_1$ chosen for the recurrence.  
\newline
$(1)$ Case of the $C_3$-frame $\br{\pm a_0,\pm a_1, \pm a_2$}\,$:$ 
\begin{equation}
\begin{split}
g^{(n)}(x)&=
\frac{p^{\binom{n}{2}}e(-nQ(x))}{d^{(n)}(x)}
\prod_{\substack{0\le i<j\le 3\\[1pt]
\mbox{\scriptsize\rm or}\ 4\le i<j\le 7}}
\Gamma(qu_iu_j;p,q,q)
\prod_{\substack{0\le i\le 3\\[1pt] 4\le j\le 7}}
\Gamma(q^{1-n}u_iu_j;p,q,q), 
\\
d^{(n)}(x)&=q^{2\binom{3}{2}}(pq/u_0u_1)^{\binom{n}{2}}
\prod_{k=1}^{n}
\theta(q^{1-n}u_0u_3;p;q)_{k-1}
\theta(q^{k-n}u_0/u_3;p;q)_{k-1},
\\
&\quad\cdot
\prod_{k=1}^{n}
\theta(q^{1-n}u_1u_2;p;q)_{k-1}
\theta(q^{k-n}u_1/u_2;p;q)_{k-1}, 
\\
\psi_{ij}^{(n)}(x)&=
I(
q^{n-i}t_0,q^{n-j}t_1,q^{j-1}t_2,q^{i-1}t_3,
t_4,t_5,t_6,t_7;p,q),
\\
&\quad
t_k=\begin{cases}
u_k\sqrt{pq/u_0u_1u_2u_3} & (k=0,1,2,3),
\\
u_k\sqrt{pq/u_4u_5u_6u_7} & (k=4,5,6,7). 
\end{cases}
\end{split}
\end{equation}
$(2)$ Case of the $C_3$-frame 
$\br{\pm a_7,\pm a_1,\pm a_2}$\,$:$
\begin{equation}
\begin{split}
g^{(n)}(x)&=\frac{p^{\binom{n}{2}}e(-nQ(x))}
{d^{(n)}(x)}
\prod_{0\le i<j\le 7}
\Gamma(q^{1-n}u_iu_j;p,q,q),
\\
d^{(n)}(x)&=
q^{-\binom{n+1}{3}}(u_2u_3)^{\binom{n}{2}}
\prod_{k=1}^{n}
\theta(q^{1-k}u_0u_3;p;q)_{k-1}
\theta(q^{k-n}u_0/u_3;p;q)_{k-1}
\\
&\quad\cdot
\prod_{k=1}^{n}
\theta(q^{1-k}u_1u_2;p;q)_{k-1}
\theta(q^{k-n}u_1/u_2;p;q)_{k-1},
\\
\psi^{(n)}_{ij}(x)&=
I(q^{n-i}t_0,q^{n-j}t_1,q^{1-j}t_2,q^{1-i}t_3,t_4,t_5,t_6,t_7;p,q),
\quad t_k=q^{\shf(1-n)}u_k.
\end{split}
\end{equation}
\end{thm}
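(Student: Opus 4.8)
The plan is to obtain both determinant formulas as specializations of the general $2$-directional Casorati determinant representation of Theorem~\ref{thm:4B}, applied to the $W(E_7)$-invariant hypergeometric $\tau$-function of Theorem~\ref{thm:6A}, with the $C_3$-frame of type $\II_1$ taken to be $\br{\pm a_0,\pm a_1,\pm a_2}$ in case~$(1)$ and $\br{\pm a_7,\pm a_1,\pm a_2}$ in case~$(2)$, the $a_i$ being the vectors of \eqref{eq:aiA1}. Since $s_0$ interchanges $a_0$ and $a_7$ and fixes $a_1,\ldots,a_6$, these two frames are exchanged by $s_0$; as the $\tau$-function is $s_0$-invariant, case~$(2)$ follows from case~$(1)$ by applying the $s_0$-transformation, which acts on $I(u;p,q)$ by $u\mapsto\widetilde u$ and on the elliptic Gamma products by the identities already used in the proof of Theorem~\ref{thm:6A}. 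It therefore suffices to carry out case~$(1)$ in detail; case~$(2)$ may equally well be treated by rerunning the same scheme, which is in fact slightly more direct there.

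First I would produce the splitting $\tau^{(1)}(x)=g^{(1)}(x)\psi(x)$ demanded by Theorem~\ref{thm:4B}. On $H_{\varpi+\delta}$ one has $u_0u_1\cdots u_7=p^2q^2$, so the Bailey transform $\widetilde u=t$ of \eqref{eq:Bailey1} is well defined, and I would take $\psi(x)=I(t;p,q)$, which is the entry $\psi^{(1)}_{11}(x)$ of the statement. Rewriting $\tau^{(1)}(x)=e(-Q(x))\,I(u;p,q)\,\cF(x)$ through \eqref{eq:Bailey1} and the relation $\Gamma(qz;p,q,q)=\Gamma(z;p,q)\Gamma(z;p,q,q)$, the cofactor $g^{(1)}(x)=\tau^{(1)}(x)/\psi(x)$ is brought into the form claimed for $n=1$. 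One then has to verify the hypotheses of Theorem~\ref{thm:4B}: the gauge condition \eqref{eq:condgamma} for $n=0$ is precisely the identity \eqref{eq:4AII1} already checked in the proof of Theorem~\ref{thm:6A}, while for $n=1$ it reduces, after cancelling the $e(-Q)$- and $\cF$-factors, to the behaviour of $I(u;p,q)$ under $x\mapsto x\pm a_1\delta$ relative to $x\mapsto x\pm a_2\delta$, which follows from \eqref{eq:Bailey1} together with the values of $s_0(u_iu_j)$ (equivalently, from the contiguity relations of Proposition~\ref{prop:5B}). Theorem~\ref{thm:4B} then gives $\tau^{(n)}(x)=g^{(n)}(x)\det\big(\psi^{(n)}_{ij}(x)\big)_{i,j=1}^{n}$ with $\psi^{(n)}_{ij}(x)=\psi\big(x-(n-1)a_0\delta+(n+1-i-j)a_1\delta+(j-i)a_2\delta\big)$ and $g^{(n)}$ fixed recursively by \eqref{eq:recgamma}.

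Two explicit computations then remain. The first, identifying the Casorati entries with shifted integrals, is lattice bookkeeping: since $a_0,a_1,a_2$ involve only $v_0,\ldots,v_3$, the shift vector $-(n-1)a_0+(n+1-i-j)a_1+(j-i)a_2$ leaves $u_4,\ldots,u_7$ untouched and multiplies $u_0u_1u_2u_3$ by $q^{2-2n}$, hence $\sqrt{pq/u_0u_1u_2u_3}$ by $q^{n-1}$; combining this with the individual $q$-shifts of $u_0,u_1,u_2,u_3$ shows that $t_0,t_1,t_2,t_3$ are rescaled by $q^{n-i},q^{n-j},q^{j-1},q^{i-1}$ while $t_4,\ldots,t_7$ are unchanged, which is the asserted $\psi^{(n)}_{ij}(x)=I(q^{n-i}t_0,q^{n-j}t_1,q^{j-1}t_2,q^{i-1}t_3,t_4,t_5,t_6,t_7;p,q)$. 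The second computation, solving the recursion \eqref{eq:recgamma} for $g^{(n)}(x)$ in closed form, is where the real work lies: writing $[\ipr{a_0\pm a_2}{x}]/[\ipr{a_1\pm a_2}{x}]$ multiplicatively via $[\alpha\pm\beta]=a^{-1}\theta(ab^{\pm1};p)$ and iterating from $g^{(0)}(x)=\tau^{(0)}(x)$ and the $g^{(1)}(x)$ found above, the nested products collapse, by Lemma~\ref{lem:gauge}, into the elliptic shifted factorials $\theta(q^{1-n}u_0u_3;p;q)_{k-1}$, $\theta(q^{k-n}u_0/u_3;p;q)_{k-1}$, $\theta(q^{1-n}u_1u_2;p;q)_{k-1}$, $\theta(q^{k-n}u_1/u_2;p;q)_{k-1}$ gathered in $d^{(n)}(x)$, while the accompanying monomial prefactor (powers of $p$, $q$ and of $u_0u_1$) is pinned down by an induction on $n$; together with $\Gamma(qz;p,q,q)=\Gamma(z;p,q)\Gamma(z;p,q,q)$ this reproduces the stated $g^{(n)}(x)$. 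I expect this telescoping — exact control of the monomial prefactors and the identification of the collapsed products with the elliptic shifted factorials — to be the main obstacle; everything else is either available from Theorems~\ref{thm:4A}, \ref{thm:4B}, \ref{thm:5A}, \ref{thm:6A}, Lemma~\ref{lem:gauge} and Proposition~\ref{prop:5B}, or is routine. For case~$(2)$ the same argument applies with the frame $\br{\pm a_7,\pm a_1,\pm a_2}$: here $\psi(x)=I(u;p,q)$, so $g^{(1)}(x)=e(-Q(x))\cF(x)$, the relevant shift vectors rescale all eight variables and produce $t_k=q^{\shf(1-n)}u_k$, and the corresponding telescoping yields the $d^{(n)}(x)$ of case~$(2)$; alternatively, one applies $s_0$ directly to the case-$(1)$ formula.
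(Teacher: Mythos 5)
Your proposal is correct and follows essentially the same route as the paper's Section~\ref{sec:7}: split $\tau^{(1)}=g^{(1)}\psi$ via the Bailey transformation \eqref{eq:Bailey1}, invoke Theorem~\ref{thm:4B} with the frame \eqref{eq:aiA1}, read off the shifted entries $\psi^{(n)}_{ij}$, solve the gauge recursion \eqref{eq:recgamma} in closed form, and obtain case~$(2)$ by applying $s_0$. The only ingredient of the paper's argument you omit is Warnaar's determinant evaluation, which converts $\det(\psi^{(n)}_{ij})$ into $d^{(n)}(x)I_n(t;p,q)$; that step is needed only for the multiple-integral representation of Theorem~\ref{thm:6C} (the paper proves the two theorems simultaneously, which is why $d^{(n)}$ is introduced there rather than extracted from the gauge recursion as you do), so its absence is not a gap for Theorem~\ref{thm:6B} itself.
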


Rewriting
the 2-directional Casorati determinants above, 
we obtain expressions of the 
$W(E_7)$-invariant 
hypergeometric 
$\tau$-function in terms of multiple elliptic 
hypergeometric integrals.  
For the variables $t=(t_0,t_1,\ldots,t_7)$, 
we consider the multiple integrals as in 
Rains \cite{Rains2005,Rains2010}: 
\begin{equation}\label{eq:RainsIn}
\begin{split}
&I_n(t;p,q)=I_n(t_0,t_1,\ldots,t_7;p,q)
\\
&=
\frac{(p;p)_{\infty}^n(q;q)_\infty^{n}}
{2^n n! (2\pi\sqrt{-1})^n}
\int_{C^n}
\prod_{i=1}^{n}
\frac{
\prod_{k=0}^{7}\Gamma(t_kz_i^{\pm1};p,q)
}{\Gamma(z_i^{\pm2};p,q)}
\prod_{1\le i<j\le n}
\theta(z_i^{\pm1}z_j^{\pm1};p)
\frac{dz_1\cdots dz_n}{z_1\cdots z_n}.
\end{split}
\end{equation}
We remark that $I_n(t;p,q)$ is a special case (with $s=q$)
of the $BC_n$ elliptic hypergeometric 
integral
\begin{equation}
\frac{(p;p)_{\infty}^n(q;q)_\infty^{n}}
{2^n n! (2\pi\sqrt{-1})^n}
\int_{C^n}
\prod_{i=1}^{n}
\frac{
\prod_{k=0}^{7}\Gamma(t_kz_i^{\pm1};p,q)
}{\Gamma(z_i^{\pm2};p,q)}
\prod_{1\le i<j\le n}
\frac{\Gamma(sz_i^{\pm1}z_j^{\pm1};p,q)}
{\Gamma(z_i^{\pm1}z_j^{\pm1};p,q)}
\frac{dz_1\cdots dz_n}{z_1\cdots z_n}
\end{equation}
of type II. 
\begin{thm}[Multiple integral representation]
\label{thm:6C}
The $W(E_7)$-invariant hypergeometric $\tau$-function 
of Theorem \ref{thm:6A} is expressed as follows 
in terms of multiple elliptic hypergeometric integrals\,$:$
\begin{equation}\label{eq:intreptau}
\begin{split}
&\tau^{(n)}(x)
=
p^{\binom{n}{2}}e(-nQ(x))\,
I_n(q^{\shf(1-n)}u;p,q)
\prod_{0\le i<j\le 7}
\Gamma(q^{1-n}u_iu_j;p,q,q)
\\
&=
p^{\binom{n}{2}}e(-nQ(x))\,
I_n(\widetilde{u};p,q)
\prod_{\substack{0\le i<j\le 3
\\[1pt]\mbox{\scriptsize\rm or}\,4\le i<j\le 7}}
\Gamma(qu_iu_j;p,q,q)
\prod_{\substack{0\le i\le 3\\[1pt] 4\le j\le 7}}
\Gamma(q^{1-n}u_iu_j;p,q,q)
\\
&(
x\in H_{\varpi+n\delta},\ n=0,1,2,\ldots),
\end{split}
\end{equation}
where 
\begin{equation}
\widetilde{u}_k=
\begin{cases}
u_k\sqrt{pq/u_0u_1u_2u_3} & (k=0,1,2,3),\\
u_k\sqrt{pq/u_4u_5u_6u_7} & (k=4,5,6,7).
\end{cases}
\end{equation}
\end{thm}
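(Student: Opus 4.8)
The plan is to derive the theorem from the determinant formula of Theorem \ref{thm:6B} by showing that the 2-directional Casorati determinant of single elliptic hypergeometric integrals \emph{is}, up to an explicit prefactor, the multiple integral $I_n$. I would work with case $(1)$ of Theorem \ref{thm:6B}, i.e.\ the $C_3$-frame $\br{\pm a_0,\pm a_1,\pm a_2}$, because there the shifts are balanced: the row index $i$ shifts $t_0$ by $q^{n-i}$ and $t_3$ by $q^{i-1}$, the column index $j$ shifts $t_1$ by $q^{n-j}$ and $t_2$ by $q^{j-1}$, and in each case the two exponents are non-negative and sum to the constant $n-1$. Substituting the explicit gauge factor $g^{(n)}(x)$ of Theorem \ref{thm:6B}$(1)$ into $\tau^{(n)}=g^{(n)}\det(\psi^{(n)}_{ij})$ and into the right-hand side of \eqref{eq:intreptau}, all the prefactors $p^{\binom n2}$, $e(-nQ(x))$ and the elliptic gamma products cancel, so the assertion reduces to the pure identity
\[
\det\big(\psi^{(n)}_{ij}(x)\big)_{i,j=1}^{n}=d^{(n)}(x)\,I_n(\widetilde u;p,q),\qquad
\psi^{(n)}_{ij}(x)=I\big(q^{n-i}t_0,q^{n-j}t_1,q^{j-1}t_2,q^{i-1}t_3,t_4,\ldots,t_7;p,q\big),\ \ t=\widetilde u,
\]
with $d^{(n)}(x)$ the explicit theta-product of Theorem \ref{thm:6B}$(1)$.

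Next I would expand each entry through the integral representation \eqref{eq:ehi8} and the shift relation $\Gamma(q^{m}w;p,q)=\theta(w;p;q)_{m}\,\Gamma(w;p,q)$ (valid for $m\ge0$, which is exactly why case $(1)$ avoids denominators): each $\psi^{(n)}_{ij}$ becomes $c_1\int_{C}f_i(z)g_j(z)h(z)\tfrac{dz}{z}$ with $c_1=\tfrac{(p;p)_\infty(q;q)_\infty}{4\pi\sqrt{-1}}$, where $f_i(z)=\theta(t_0z^{\pm1};p;q)_{n-i}\,\theta(t_3z^{\pm1};p;q)_{i-1}\,\Gamma(t_0z^{\pm1};p,q)\Gamma(t_3z^{\pm1};p,q)$ depends only on the row, $g_j(z)$ only on the column, and $h(z)=\prod_{k=4}^{7}\Gamma(t_kz^{\pm1};p,q)/\Gamma(z^{\pm2};p,q)$ is common. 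The Andr\'eief (Heine--Gram) integration formula then gives
\[
\det\big(\psi^{(n)}_{ij}\big)=\frac{c_1^{\,n}}{n!}\int_{C^n}\det\big(f_i(z_l)\big)_{i,l=1}^{n}\det\big(g_j(z_l)\big)_{j,l=1}^{n}\prod_{l=1}^{n}h(z_l)\,\frac{dz_l}{z_l},
\]
and $c_1^{\,n}/n!$ equals the normalization $(p;p)_\infty^{n}(q;q)_\infty^{n}/\big(2^{n}n!(2\pi\sqrt{-1})^{n}\big)$ of $I_n$ in \eqref{eq:RainsIn}.

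It then remains to evaluate the two theta-function Vandermonde determinants. Pulling out the row-independent factors $\prod_l\Gamma(t_0z_l^{\pm1};p,q)\Gamma(t_3z_l^{\pm1};p,q)$, the remaining matrix $\big(\theta(t_0z_l^{\pm1};p;q)_{n-i}\,\theta(t_3z_l^{\pm1};p;q)_{i-1}\big)_{i,l}$ has all its rows in the $n$-dimensional space of $BC_1$-symmetric theta functions of level $n-1$ in $z$; hence its determinant, being antisymmetric in $z_1,\ldots,z_n$ and of level $n-1$ in each variable, equals a scalar $c_A(t_0,t_3;q,p)$ times the $BC_n$ Weyl denominator $\Delta(z)$, where $\Delta(z)^2=\prod_{1\le l<m\le n}\theta(z_l^{\pm1}z_m^{\pm1};p)$ (this last being a perfect square, by $\theta(1/w;p)=-w^{-1}\theta(w;p)$). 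Likewise $\det(g_j(z_l))=c_B(t_1,t_2;q,p)\prod_l\Gamma(t_1z_l^{\pm1};p,q)\Gamma(t_2z_l^{\pm1};p,q)\,\Delta(z)$. Substituting, the integrand becomes $c_A c_B\prod_l\big(\prod_{k=0}^{7}\Gamma(t_kz_l^{\pm1};p,q)/\Gamma(z_l^{\pm2};p,q)\big)\prod_{l<m}\theta(z_l^{\pm1}z_m^{\pm1};p)$, so that $\det(\psi^{(n)}_{ij})=c_A(t_0,t_3)\,c_B(t_1,t_2)\,I_n(\widetilde u;p,q)$; matching against $d^{(n)}(x)$ then identifies $d^{(n)}(x)=c_A c_B$ and proves the second equality of \eqref{eq:intreptau}. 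The first equality, with argument $q^{\shf(1-n)}u$, follows either by repeating the computation for the case-$(2)$ frame $\br{\pm a_7,\pm a_1,\pm a_2}$ of Theorem \ref{thm:6B}, or directly from the second by the $BC_n$-transformation formula of Rains \cite{Rains2010} (the $n$-variable analogue of Theorem \ref{thm:5A}$(1)$), noting that $\widetilde u=s_0\big(q^{\shf(1-n)}u\big)$ on $H_{\varpi+n\delta}$. As an alternative one can bypass Theorem \ref{thm:6B} altogether and verify directly that the proposed right-hand side of \eqref{eq:intreptau} satisfies the hypotheses of Theorem \ref{thm:4A}, using Rains's Hirota-type quadratic relations for the $I_n$, and conclude by uniqueness of the hypergeometric $\tau$-function.

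The hard part, I expect, will be the explicit elliptic Vandermonde evaluation together with its constant: the \emph{shape} $\det=\text{scalar}\times\Delta(z)$ is forced by dimension-counting, but pinning the scalar $c_A(t_0,t_3;q,p)$ down to the precise product $\prod_{k=1}^{n}\theta(q^{1-n}u_0u_3;p;q)_{k-1}\theta(q^{k-n}u_0/u_3;p;q)_{k-1}$ (and its $u_1,u_2$ counterpart) entering $d^{(n)}(x)$ requires a careful induction on $n$ — or a leading-asymptotics comparison as some $z_l\to0$, or the pinching/residue technique already used in Section \ref{sec:5} for the ${}_{12}V_{11}$-reduction. The secondary nuisance is the bookkeeping of the prefactor: the products of double elliptic gammas $\Gamma(\cdot;p,q)$ that emerge when $g^{(n)}(x)$ is cancelled must be reconciled with the triple gammas $\Gamma(\cdot;p,q,q)$ in $\cF$ via $\Gamma(qz;p,q,q)=\Gamma(z;p,q)\Gamma(z;p,q,q)$, and the scalar factors $q^{-\binom{n+1}{3}}$, $(u_2u_3)^{\binom n2}$, $p^{\binom n2}$ must be collected consistently; these steps are mechanical but lengthy, and all the genuine mathematical content sits in the Andr\'eief reduction and the elliptic Vandermonde identity.
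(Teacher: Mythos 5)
Your plan coincides with the paper's proof of Theorems \ref{thm:6B} and \ref{thm:6C} in Section \ref{sec:7}: reduce to case $(1)$ of the determinant formula, apply the Andr\'eief reduction to write $\det(\psi^{(n)}_{ij})$ as an $n$-fold integral of a product of two theta-Vandermonde determinants, evaluate those, and obtain the other expression by applying $s_0$ (equivalently, passing to the case-$(2)$ frame). The one step you flag as hard --- pinning down the scalar in the elliptic Vandermonde evaluation --- is exactly Warnaar's elliptic extension of the Krattenthaler determinant, which the paper quotes as a known lemma and proves by precisely the divisibility-plus-ellipticity-plus-triangular-specialization argument you sketch.
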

Theorems \ref{thm:6B} and \ref{thm:6C}
will be proved in the next section. 

We remark here that the equality of two expressions in 
\eqref{eq:intreptau} implies the transformation 
formula 
\begin{equation}\label{eq:transIn1}
I_n(t;p,q)=I_n(\widetilde{t};p,q)
\prod_{0\le i<j\le 3}\frac{\Gamma(q^nt_it_j;p,q,q)}
{\Gamma(t_it_j;p,q,q)}
\prod_{4\le i<j\le 7}\frac{\Gamma(q^nt_it_j;p,q,q)}
{\Gamma(t_it_j;p,q,q)}
\end{equation}
for the hypergeometric integral $I_n(t;p,q)$ 
on $H_{\varpi+(2-n)\delta}$ ($t_0t_1\cdots t_7=
p^2q^{4-2n}$), where 
\begin{equation}
\widetilde{t}=(\widetilde{t}_0,\widetilde{t}_1,
\ldots,\widetilde{t}_7),\quad
\widetilde{t_k}=s_0(t_k)=
\begin{cases}
t_k\sqrt{pq^{2-n}/t_0t_1t_2t_3} & (k=0,1,2,3),\\
t_k\sqrt{pq^{2-n}/t_4t_5t_6t_7} & (k=4,5,6,7).
\end{cases}
\end{equation}
This is a special case of 
a transformation formula of Rains \cite{Rains2010}.  
Note also, that the meromorphic function 
\begin{equation}\label{eq:Psin}
\Psi_n(t;p,q)=
I_n(t;p,q)
\prod_{0\le i<j\le 7}\Gamma(t_it_j;p,q,q)
\end{equation}
on $H_{\varpi+(2-n)\delta}$ is $W(E_7)$-invariant.  
The invariance of $\Psi_n(t;p,q)$ with respect to 
$w$ of \eqref{eq:ww} gives rise to the transformation formula
\begin{equation}\label{eq:transIn2}
\begin{split}
&I_n(t;p,q)
=I_n(\widehat{t};p,q)
\prod_{0\le i<j\le 7}
\frac{\Gamma(q^nt_it_j;p,q,q)}{\Gamma(t_it_j;p,q,q)}
\\
&\widehat{t}=(\widehat{t}_0,\widehat{t}_1,
\ldots,\widehat{t}_7),\quad
\widehat{t}_k=w(t_k)=\sqrt{pq^{2-n}}/t_k
\quad(k=0,1,\ldots,7)
\end{split}
\end{equation}
under the condition $t_0t_1\cdots t_7=p^2q^{4-2n}$. 

Applying this transformation formula, we obtain another 
expression of $\tau(x)$ of Theorem \ref{thm:6C}:
\begin{equation}
\begin{split}
\tau^{(n)}(x)
&=p^{\binom{n}{2}}e(-nQ(x))\,
I_n(\sqrt{pq}u^{-1};p,q)
\prod_{0\le i<j\le 7}\Gamma(qu_iu_j;p,q,q)
\\
&=p^{\binom{n}{2}}e(-nQ(x))\,
I_n(\sqrt{pq}u^{-1};p,q)
\prod_{0\le i<j\le 7}\Gamma(pq/u_iu_j;p,q,q). 
\end{split}
\end{equation}
In the notation of \eqref{eq:Psin}, the $W(E_7)$-invariant 
hypergeometric $\tau$-function is expressed as 
\begin{equation}
\begin{split}
\tau^{(n)}(x)
&=
p^{\binom{n}{2}}e(-nQ(x))\Psi_{n}(q^{\shf(1-n)}u;p,q)
\\
&=
p^{\binom{n}{2}}e(-nQ(x))\Psi_{n}(p^{\shf}q^{\shf}u^{-1};p,q)
\end{split}
\end{equation}
for $x\in H_{\varpi+n\delta}$  $(n=0,1,2,\ldots)$.

\section{Proof of Theorems \ref{thm:6B}\ and \ref{thm:6C}}
\label{sec:7}
In this section we prove Theorems 
\ref{thm:6B} and \ref{thm:6C} simultaneously.  
\par\medskip
We take the $C_3$-frame 
$\br{\pm a_0,\pm a_1,\pm a_2}$ of type $\II_1$ 
as in \eqref{eq:aiA1} 
for constructing $\tau^{(n)}(x)$ $n=2,3,\ldots$.
In this case, we have 
\begin{equation}
\begin{split}
&\br{a_0\pm a_1}=\br{v_0+v_1,v_2+v_3},
\quad
\br{a_0\pm a_2}=\br{v_0+v_2,v_1+v_3}
\\
&\br{a_1\pm a_2}=\br{v_0-v_3,v_1-v_2}. 
\end{split}
\end{equation}
Hence, 
in order to apply Theorem \ref{thm:4B}, we need 
to decompose $\tau^{(1)}(x)$ into the product
\begin{equation}
\tau^{(1)}(x)=g^{(1)}(x)\psi(x)\quad(x\in H_{\varpi+\delta})
\end{equation}
with a gauge factor satisfying 
\begin{equation}\label{eq:condg1}
\frac{g^{(1)}(x\pm a_1\delta)}{g^{(1)}(x\pm a_2\delta)}
=
\dfrac
{[\ipr{a_0\pm a_1}{x}]}
{[\ipr{a_0\pm a_2}{x}]}
=
\dfrac
{\theta(u_0u_1;p)\theta(u_2u_3;p)}
{\theta(u_0u_2;p)\theta(u_1u_3;p)}
\quad(x\in H_{\varpi+\delta}). 
\end{equation}
Then the gauge factors $g^{(n)}(x)$ $n=2,3,\ldots$ 
are determined by the recurrence formula 
\begin{equation}\label{eq:recgn}
\begin{split}
\frac{
g^{(n-1)}(x-a_0\delta)
g^{(n+1)}(x+a_0\delta)}
{g^{(n)}(x\pm a_1\delta)}
=\frac{[\ipr{a_0\pm a_2}{x}]}
{[\ipr{a_1\pm a_2}{x}]}
=\frac{\theta(u_0u_2;p)\theta(u_1u_3;p)}
{u_2u_3\theta(u_0/u_3;p)\theta(u_1/u_2;p)}
\end{split}
\end{equation}
for $n=1,2,\ldots$ 
starting from $g^{(0)}(x)=\tau^{(0)}(x)$ and 
$g^{(1)}(x)$.  

For this purpose, using the transformation formula 
(1) of Theorem \ref{thm:5A}, 
we rewrite $\tau^{(1)}(x)$ as 
\begin{equation}
\begin{split}
\tau^{(1)}(x)
&=e(-Q(x))\,
I(\widetilde{u};p,q)
\prod_{0\le i<j\le 7}\Gamma(u_iu_j;p,q,q)
\prod_{\substack{
0\le i<j\le 3\\[1pt]
\mbox{\scriptsize\rm or}\,
4\le i<j\le 7}}\Gamma(u_iu_j;p,q)
\\
&=
e(-Q(x))\,
I(\widetilde{u};p,q)
\prod_{\substack{0\le i<j\le 3\\[1pt]
\mbox{\scriptsize\rm or}\,4\le i<j\le 7}}
\Gamma(qu_iu_j;p,q,q)
\prod_{\substack{0\le i\le 3\\[1pt] 4\le j\le 7}}
\Gamma(u_iu_j;p,q,q),
\end{split}
\end{equation}
where 
\begin{equation}\label{eq:deftildeu}
\widetilde{u}=
(\widetilde{u}_0,\widetilde{u}_1,\ldots,\widetilde{u}_7),
\quad
\widetilde{u}_i
=\begin{cases}
u_i\sqrt{pq/u_0u_1u_2u_3}\quad(i=0,1,2,3),\\
u_i\sqrt{pq/u_4u_5u_6u_7}\quad(i=4,5,6,7),  
\end{cases}
\end{equation}
and set
\begin{equation}
\begin{split}
g^{(1)}(x)&=
e(-Q(x))
\prod_{\substack{0\le i<j\le 3\\[1pt]
\mbox{\scriptsize\rm or}\,4\le i<j\le 7}}
\Gamma(qu_iu_j;p,q,q)
\prod_{0\le i\le 3, 4\le j\le 7}
\Gamma(u_iu_j;p,q),
\\
\psi(x)&=I(\widetilde{u};p,q)\quad(x\in H_{\varpi+\delta}). 
\end{split}
\end{equation}
Then one can directly verify 
that $g^{(1)}(x)$ 
satisfies the condition 
\eqref{eq:condg1}. 
For the moment we set $t=(t_0,t_1,\ldots,t_7)$, 
$t_i=\widetilde{u}_i$ $(i=0,1,\ldots,7)$, 
so that $\psi(x)=I(t;p,q)$. 

\par\medskip
We now compute the determinant
\begin{equation}
\begin{split}
&K^{(n)}(x)=
\det\big(\psi^{(n)}_{ij}(x)\big)_{i,j=1}^{n},\quad
\psi^{(n)}_{ij}(x)=\psi(x+v^{(n)}_{ij}\delta), 
\\
&v^{(n)}_{ij}=(1-n)a_0+(n+1-i-j)a_1+(j-i)a_2
\\
&\phantom{v^{(n)}_{ij}}=(1-i,1-j,j-n,i-n,0,0,0,0).
\end{split}
\end{equation}
Noting that 
the multiplicative coordinates of $x+v^{(n)}_{ij}\delta$ 
are given by 
\begin{equation}
(q^{1-i}u_0,q^{1-j}u_1,q^{j-n}u_2,q^{i-n}u_3,u_4,u_5,u_6), 
\end{equation}
we obtain
\begin{equation}
\psi^{(n)}_{ij}(x)=I(q^{n-i}t_0,q^{n-j}t_1,q^{j-1}t_2,q^{i-1}t_3,t_4,t_5,t_6,t_7;p,q). 
\end{equation}
Hence $\psi_{ij}^{(n)}(x)$ is expressed as 
\begin{equation}
\begin{split}
\psi_{ij}^{(n)}(x)&=\kappa
\int_{C}h(z)f_i(z)g_j(z)\dfrac{dz}{z},
\quad 
\kappa=\frac{(p;p)_\infty(q;q)_\infty}
{4\pi\sqrt{-1}}, 
\\
h(z)&=\frac{
\prod_{k=0}^{7}\Gamma(t_kz^{\pm1};p,q)
}{\Gamma(z^{\pm2};p,q)},
\\
f_i(z)&=\theta(t_0z^{\pm1};p;q)_{n-i}
\,\theta(t_3z^{\pm1};p;q)_{i-1},
\\
g_j(z)&=\theta(t_1z^{\pm1};p;q)_{n-j}\,\theta(t_2z^{\pm1};p;q)_{j-1}, 
\end{split}
\end{equation}
for $i,j=1,2,\ldots,n$, 
where 
$\theta(z;p;q)_{k}=\theta(z;p)\theta(qz;p)\cdots\theta(q^{k-1}z;p)$ 
$(k=0,1,2,\ldots)$. 
We rewrite the determinant 
$K^{(n)}(x)=\det\big(\psi_{ij}^{(n)}(x)\big)_{i,j=1}^{n}$
as
\begin{equation}
\begin{split}
K^{(n)}(x)
&=\frac{1}{n!}
\sum_{\sigma,\tau\in\mathfrak{S}_n}
\sgn(\sigma)\sgn(\tau)
\prod_{k=1}^{n}\psi^{(n)}_{\sigma(k),\tau(k)}(x)
\\
&=
\frac{\kappa^n}{n!}
\sum_{\sigma,\tau\in\mathfrak{S}_n}
\sgn(\sigma)\sgn(\tau)
\int_{C^n}
\prod_{k=1}^{n}
h(z_k)f_{\sigma(k)}(z_k)g_{\sigma(k)}(z_k)
\dfrac{dz_1\cdots dz_n}{z_1\cdots z_n}
\\
&=
\frac{\kappa^n}{n!}
\int_{C^n}
h(z_1)\cdots h(z_n)
\det(f_{i}(z_j))_{i,j=1}^n
\det(g_{i}(z_j))_{i,j=1}^n
\dfrac{dz_1\cdots dz_n}{z_1\cdots z_n}.  
\end{split}
\end{equation}
Then 
the determinants 
$\det(f_j(z_i))_{i,j=1}^{n}$, 
$\det(g_j(z_i))_{i,j=1}^{n}$ 
can be evaluated by means of Warnaar's elliptic 
extension of the Krattenthaler determinant 
\cite{Warnaar2002} (see also \cite{N2015}). 
\begin{lem}[Warnaar \cite{Warnaar2002}]
For a set of complex variables $(z_1,\ldots,z_n)$ and 
two parameters $a,b$, one has 
\begin{equation}
\begin{split}
&\det\big(
\theta(az_i^{\pm1};p;q)_{j-1}
\theta(bz_i^{\pm1};p;q)_{n-j}\big)_{i,j=1}^{n}
\\
&=
q^{\binom{n}{3}}a^{\binom{n}{2}}
\prod_{k=1}^{n}\theta(b(q^{k-1}a)^{\pm1};p;q)_{n-k}\,
\prod_{1\le i<j\le n}z_i^{-1}\theta(z_iz_j^{\pm};p). 
\end{split}
\end{equation}
\end{lem}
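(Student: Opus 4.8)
The plan is to follow the classical two-step strategy for Krattenthaler-type determinant evaluations: first show that the determinant, divided by the Vandermonde-type theta product on the right, does not depend on the variables $z_1,\ldots,z_n$; then fix the remaining constant by induction on $n$ via a one-variable specialization. Write $M_{ij}=\theta(az_i^{\pm1};p;q)_{j-1}\,\theta(bz_i^{\pm1};p;q)_{n-j}$ for the $(i,j)$ entry and $D_n=\det(M_{ij})_{i,j=1}^n$.

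First I would record how $D_n$ transforms in each variable $z_i$. Every entry $M_{ij}$ is invariant under $z_i\to z_i^{-1}$; and since $\theta(az^{\pm1};p)$ is multiplied by $p^{-1}z^{-2}$ under $z\to pz$ (by the functional equations of $\theta$) and the two Pochhammer lengths in $M_{ij}$ sum to $(j-1)+(n-j)=n-1$, the entire $i$-th row of $(M_{ij})$ acquires the single scalar $p^{-(n-1)}z_i^{-2(n-1)}$ under $z_i\to pz_i$, independently of the column. A direct (if slightly fiddly) computation shows that $\prod_{1\le i<j\le n}z_i^{-1}\theta(z_iz_j^{\pm1};p)$ acquires exactly the same scalar $p^{-(n-1)}z_i^{-2(n-1)}$ under $z_i\to pz_i$; here the monomial prefactors $z_i^{-1}$ are needed to make the power of $p$ come out right. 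Moreover $D_n$ vanishes whenever $z_i\in z_j^{\pm1}p^{\bZ}$ for some $i\neq j$, since then two rows of $(M_{ij})$ become proportional, and these are precisely the (generically simple) zeros of the denominator. Consequently the ratio
\[
R_n:=D_n\,\Big/\prod_{1\le i<j\le n}z_i^{-1}\theta(z_iz_j^{\pm1};p)
\]
is holomorphic in each $z_i\in\bC^\ast$ and invariant under $z_i\to pz_i$, hence descends to a holomorphic function on the compact product $(\bC^\ast/p^{\bZ})^n$ of elliptic curves, and must be a constant $R_n(a,b)$ depending only on $a,b,p,q$.

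Next I would determine $R_n(a,b)$ by induction, specializing $z_1=a^{-1}$. Then $\theta(az_1;p;q)_{j-1}$ contains the factor $\theta(1;p)=0$ for every $j\ge2$, so the first row of the specialized matrix is $(M_{11},0,\ldots,0)$ with $M_{11}=\theta(ba^{\pm1};p;q)_{n-1}$, and Laplace expansion gives $D_n|_{z_1=a^{-1}}=\theta(ba^{\pm1};p;q)_{n-1}\cdot D'$, where $D'$ is the minor on rows and columns $2,\ldots,n$. Using $\theta(az_i^{\pm1};p;q)_{j-1}=\theta(az_i^{\pm1};p)\,\theta((qa)z_i^{\pm1};p;q)_{j-2}$ one extracts $\theta(az_i^{\pm1};p)$ as a common factor of the $i$-th row of $D'$, so that $D'=\prod_{i=2}^{n}\theta(az_i^{\pm1};p)\cdot D_{n-1}(z_2,\ldots,z_n;\,qa,\,b)$. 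On the other side, using only $\theta(w^{-1};p)=-w^{-1}\theta(w;p)$ one finds that $\prod_{1\le i<j\le n}z_i^{-1}\theta(z_iz_j^{\pm1};p)$ at $z_1=a^{-1}$ equals $a^{-(n-1)}\prod_{j=2}^{n}\theta(az_j^{\pm1};p)\cdot\prod_{2\le i<j\le n}z_i^{-1}\theta(z_iz_j^{\pm1};p)$. Equating the two resulting expressions for $D_n|_{z_1=a^{-1}}$ and cancelling the common factors yields
\[
R_n(a,b)=a^{\,n-1}\,\theta(ba^{\pm1};p;q)_{n-1}\,R_{n-1}(qa,b),\qquad R_1(a,b)=1 .
\]
Iterating gives $R_n(a,b)=\prod_{k=0}^{n-1}(q^k a)^{\,n-1-k}\,\theta\big(b(q^ka)^{\pm1};p;q\big)_{n-1-k}$, and the elementary sums $\sum_{k=0}^{n-1}(n-1-k)=\binom n2$ and $\sum_{k=0}^{n-1}k(n-1-k)=\binom n3$ turn this into the asserted value $q^{\binom n3}a^{\binom n2}\prod_{k=1}^{n}\theta\big(b(q^{k-1}a)^{\pm1};p;q\big)_{n-k}$.

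The conceptual heart — independence of $R_n$ from the $z_i$ — is robust, so I expect the main obstacle to be bookkeeping: verifying in the first step that $D_n$ and the Vandermonde-type theta product genuinely share the same multiplier $p^{-(n-1)}z_i^{-2(n-1)}$ in each variable (a stray power of $p$ or $z_i$, in particular from the prefactors $z_i^{-1}$, would wreck the argument), and, in the induction, tracking the monomial prefactors produced by the two elementary theta manipulations ($\theta(1;p)=0$ forcing the zeros in the first row, and the reduction of $\theta(z_1z_j^{\pm1};p)|_{z_1=a^{-1}}$ to $\theta(az_j^{\pm1};p)$). An alternative that avoids the induction is to specialize all variables at once to $z_i=q^{\,1-i}a^{-1}$: for this choice the factor $\theta(q^{\,i-1}az_i;p)=\theta(1;p)$ occurs in $\theta(az_i^{\pm1};p;q)_{j-1}$ precisely when $j>i$, so $(M_{ij})$ becomes lower triangular, $D_n=\prod_{i=1}^n M_{ii}$ can be read off directly, and it remains only to match this against the fully specialized right-hand side; by the $z$-independence already established this is equally legitimate, trading the recursion for a single longer theta-product identity.
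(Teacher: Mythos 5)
Your proposal is correct, and its first half is in substance the paper's own argument: the paper also establishes that the determinant is divisible by $\prod_{i<j}z_i^{-1}\theta(z_iz_j^{\pm1};p)$ (via inversion-invariance and antisymmetry) and that the ratio is elliptic in each $\zeta_i$, hence constant — you merely make the quasi-periodicity multiplier $p^{-(n-1)}z_i^{-2(n-1)}$ and the holomorphy at $z_i\in p^{\bZ}z_j^{\pm1}$ explicit, which is a fair (and correct) expansion of what the paper leaves implicit. Where you genuinely diverge is in evaluating the constant: the paper substitutes all variables at once, $z_i=q^{i-1}a$, which makes the matrix lower triangular and reads off the constant in one step, whereas your main line sets only $z_1=a^{-1}$, peels off a row and a column, and obtains the recursion $R_n(a,b)=a^{n-1}\theta(ba^{\pm1};p;q)_{n-1}R_{n-1}(qa,b)$; I checked the bookkeeping (the factor $a^{-(n-1)}\prod_{j\ge2}\theta(az_j^{\pm1};p)$ from the denominator, the shift $a\to qa$ in the minor, and the exponent sums $\sum(n-1-k)=\binom n2$, $\sum k(n-1-k)=\binom n3$) and it all closes up correctly. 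The recursion is slightly longer but more self-verifying, since each step is a rank-one reduction; the paper's one-shot specialization is shorter but requires matching a single larger product identity at the end. The ``alternative'' you sketch in your last sentence, $z_i=q^{1-i}a^{-1}$, is (up to the inversion $z_i\to z_i^{-1}$ under which everything is invariant) exactly the specialization the paper uses.
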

\begin{proof}{Sketch of proof}
The left-hand side 
is invariant 
under the inversion $z_i\to z_i^{-1}$ for each $i=1,\ldots,n$, 
and alternating with respect to $(z_1,\ldots,z_n)$.  
Hence it is divisible by 
$\prod_{1\le i<j\le n}z_i^{-1}\theta(z_iz_j^{\pm};p)$. 
Also,
the ratio of these two functions 
is elliptic with respect to the additive variable $\zeta_i$ 
with $z_i=e(\zeta_i)$ for each $i=1,\ldots,n$, 
and hence is constant.  
The constant on the right-hand side is determined 
by the substitution $z_i=q^{i-1}a$ $(i=1,\ldots,n)$
which makes the matrix on the left-hand side 
lower triangular. 
\end{proof}

\par\medskip
We thus obtain
\begin{equation}
K^{(n)}(x)=\det\big(\psi^{(n)}_{ij}(x)\big)_{i,j=1}^{n}
=d^{(n)}(x)\,I_n(t;p,q)
\end{equation}
where 
\begin{equation}
I_n(t;p,q)
=
\frac{
(p;p)_\infty^n(q;q)_\infty^n
}{2^n n!(2\pi\sqrt{-1})^n}
\int_{C^n}
\prod_{i=1}^{n}
h(z_i)
\prod_{1\le i<j\le n}
\theta(z_i^{\pm1}z_j^{\pm1};p)
\dfrac{dz_1\cdots dz_n}{z_1\cdots z_n},
\end{equation}
and
\begin{equation}
\begin{split}
&d^{(n)}(x)=
q^{2\binom{n}{3}}
(t_2t_3)^{\binom{n}{2}}
\prod_{k=1}^{n}
\theta(t_0(q^{k-1}t_3)^{\pm1};p;q)_{n-k}
\theta(t_1(q^{k-1}t_2)^{\pm1};p;q)_{n-k}
\\
&=
q^{2\binom{n}{3}}(pq/u_0u_1)^{\binom{n}{2}}
\prod_{(i,j)=(0,3),(1,2)}
\prod_{k=1}^{n}
\theta(q^{1-n}u_iu_j;p;q)_{k-1}
\theta(q^{k-n}u_i/u_k;p;q)_{k-1}. 
\end{split}
\end{equation}

Finally we determine the gauge factors $g^{(n)}(x)$ 
$n=2,3,\ldots$.  
We set
\begin{equation}
\cG^{(n)}(x)=
\prod_{\substack{0\le i<j\le 3\\[1pt]
\mbox{\scriptsize\rm or}\ 4\le i<j\le 7}}
\Gamma(qu_iu_j;p,q,q)
\prod_{\substack{0\le i\le 3\\[1pt] 4\le j\le 7}}
\Gamma(q^{1-n}u_iu_j;p,q,q), 
\end{equation}
so that $g^{(0)}(x)=\cG^{(0)}(x)$, 
$g^{(1)}(x)=e(-Q(x))\cG^{(1)}(x)$. 
By a direct computation, we see that 
 $\cG^{(n)}(x)$ satisfy the recurrence formula
 \begin{equation}
\frac{\cG^{(n-1)}(x-a_0\delta)\cG^{(n+1)}(x+a_0\delta)}
{\cG^{(n)}(x\pm a_1\delta)}
=\theta(u_0u_2;p)
\theta(u_0u_3;p)
\theta(u_1u_2;p)
\theta(u_1u_3;p).  
\end{equation}
If we set $g^{(n)}(x)=\cG^{(n)}(x)c^{(n)}(x)$, the 
recurrence formula to be satisfied by $c^{(n)}(x)$ is
given by 
\begin{equation}
\frac{c^{(n-1)}(x-a_0\delta)
c^{(n+1)}(x+a_0\delta)}
{c^{(n)}(x\pm a_1\delta)}
=\frac{1}{u_2u_3
\theta(u_0u_3^{\pm1};p)
\theta(u_1u_2^{\pm1};p)}
\end{equation}
with the initial conditions $c^{(0)}(x)=1$, 
$c^{(1)}(x)=e(-Q(x))$.  
On the other hand, 
one can verify that 
the functions 
$d^{(n)}(x)$, which appeared in the 
evaluation of the determinant $K^{(n)}(x)$, 
satisfy 
\begin{equation}
\frac{d^{(n-1)}(x-a_0\delta)d^{(n+1)}(x+a_0\delta)}
{d^{(n)}(x\pm a_1\delta)^2}
=
(p/u_0u_1)\theta(u_0u_3^{\pm1};p)
\theta(u_1u_2^{\pm1};p). 
\end{equation}
If we set $c^{(n)}(x)={e^{(n)}(x)}/{d^{(n)}(x)}$, the recurrence formula 
for $e^{(n)}(x)$ is determined as
\begin{equation}
\begin{split}
\frac{e^{(n-1)}(x-a_0\delta)
e^{(n+1)}(x+a_0\delta)}
{e^{(n)}(x\pm a_1\delta)}
=p/u_0u_1u_2u_3=pe(-2\ipr{a_0}{x}). 
\end{split}
\end{equation}
With the initial conditions 
$e^{(0)}(x)=1$, $e^{(1)}(x)=e(-Q(x))$, this recurrence is solved 
as 
\begin{equation}
e^{(n)}(x)=p^{\binom{n}{2}}e(-nQ(x))\quad(n=0,1,2,\ldots).  
\end{equation}
Hence the gauge factors $g^{(n)}(x)$ are determined 
as
\begin{equation}
\begin{split}
g^{(n)}(x)&=
\frac{p^{\binom{n}{2}}e(-nQ(x))}{d^{(n)}(x)}
\cG^{(n)}(x)
\quad(n=0,1,2,\ldots). 
\end{split}
\end{equation}
Since $K^{(n)}(x)=d^{(n)}(x) I_n(t;p,q)$
with $t=\widetilde{u}$ as in \eqref{eq:deftildeu}, 
we also obtain
\begin{equation}
\begin{split}
\tau^{(n)}(x)&
=
\frac{p^{\binom{n}{2}}e(-nQ(x))}{d^{(n)}(x)}
\cG^{(n)}(x)\det\big(\psi^{(n)}_{ij}(x)\big)_{i,j=1}^{n}\\
&=p^{\binom{n}{2}}e(-nQ(x))
\cG^{(n)}(x) I_n(\widetilde{u};p,q).
\end{split}
\end{equation}
This proves Theorem \ref{thm:6B}, (1) and 
the second equality of Theorem \ref{thm:6C}.  
\par\medskip
We already know by Theorem \ref{thm:6A}
that $\tau^{(n)}(x)$ $(x\in H_{\varpi+n\delta})$ are $W(E_7)$-invariant 
for all $n=0,1,2,\ldots$.
Namely
$w(\tau^{(n)}(x))=\tau^{(n)}(x)$ for any $w\in W(E_7)$. 
This means that, for each $w\in W(E_7)$, $\tau^{(n)}(x)$ 
$(x\in H_{\varpi+n\delta})$ has a determinant 
formula 
\begin{equation}\label{eq:wtaudet}
\tau^{(n)}(x)
=
\frac{p^{\binom{n}{2}}e(-nQ(x))}{\gamma^{(n)}(x)}
\cF^{(n)}(x)\det\big(\varphi^{(n)}_{ij}(x)\big)_{i,j=1}^{n}
\end{equation}
and a multiple integral representation 
\begin{equation}\label{eq:wtauintegral}
\tau^{(n)}(x)=p^{\binom{n}{2}}e(-nQ(x))
\cF^{(n)}(x) I_n(t;p,q), 
\end{equation}
where
$\gamma^{(n)}$, 
$\cF^{(n)}$,
$\varphi_{ij}^{(n)}$ 
($i,j=1,\ldots,n$)
and $t_k$ ($k=0,1,\ldots,7$)
are specified by applying $w$ to the functions 
$d^{(n)}$, 
$\cG^{(n)}$, 
$\psi_{ij}^{(n)}$
and 
$\widetilde{u}_k$ on $H_{\varpi+n\delta}$,
respectively. 
When $w=s_0$, by the transformation
\begin{equation}
s_0(u_i)=\begin{cases}
u_i\sqrt{pq^n/u_0u_1u_2u_3} & (i=0,1,2,3),\\
u_i\sqrt{pq^n/u_4u_5u_6u_7} & (i=4,5,6,7),
\end{cases}
\end{equation}
we obtain 
\begin{equation}
\begin{split}
&\gamma^{(n)}(x)=s_0(d^{(n)}(x))
\\
&=
q^{2\binom{n}{3}}(q^{1-n}u_2u_3)^{\binom{n}{2}}
\prod_{(i,j)=(0,3),(1,2)}
\prod_{k=1}^{n}
\theta(pq/u_iu_j;p;q)_{k-1}
\theta(q^{k-n}u_i/u_j;p;q)_{k-1}
\\
&=
q^{-\binom{n+1}{3}}(u_2u_3)^{\binom{n}{2}}
\prod_{(i,j)=(0,3),(1,2)}
\prod_{k=1}^{n}
\theta(q^{1-k}u_iu_j;p;q)_{k-1}
\theta(q^{k-n}u_i/u_j;p;q)_{k-1}
\end{split}
\end{equation}
and
\begin{equation}
\begin{split}
\cF^{(n)}(x)&=
s_0(\cG^{(n)}(x))
=\prod_{0\le i<j\le n}\Gamma(q^{1-n}u_iu_j;p,q),
\\
\varphi_{ij}^{(n)}(x)&=
s_0(\psi_{ij}^{(n)}(x))
=I(q^{n-i}t_0,
q^{n-j}t_1,
q^{j-1}t_2,
q^{i-1}t_3,
t_4,t_5,t_6,t_7;
p,q),
\\
t_k&=q^{\shf(1-n)}u_k. 
\end{split}
\end{equation}
Formulas \eqref{eq:wtaudet} and \eqref{eq:wtauintegral}
for this case $w=s_0$ 
give Theorem \ref{thm:6B}, (2) and the first 
equality of Theorem \ref{thm:6C}. 
\section{\boldmath Transformation of hypergeometric $\tau$-functions}
\label{sec:8}
From the $W(E_7)$-invariant hypergeometric $\tau$-function 
on $D_\varpi$ discussed above, 
one can construct 
a class of $\tau$-functions of hypergeometric type 
by the transformations in Theorem \ref{thm:2B}. 
In this section we give some remarks on this class of 
ORG $\tau$-functions. 
\par\medskip
In what follows, for a root $\alpha\in \Delta(E_8)$ 
and a constant $\kappa\in\bC$, 
we denote by 
\begin{equation}
H_{\alpha,\kappa}=\brm{x\in V}{\ipr{\alpha}{x}=\kappa}
\end{equation}
the hyperplane of level $\kappa$ with respect to $\alpha$.  
We consider a meromorphic function 
$\tau(x)$ on the disjoint union of parallel hyperplanes
\begin{equation}
D_{\alpha,\kappa}=
\bigsqcup_{n\in\bZ} H_{\alpha,\kappa+n\delta}
\subset V.  
\end{equation}
Denoting by $\tau^{(n)}(x)$ the restriction of $\tau(x)$ 
to the $n$th hyperplane $H_{\kappa+n\delta}$, 
we say that an ORG $\tau$-function $\tau(x)$ 
on $D_{\alpha,\kappa}$ is a {\em hypergeometric
$\tau$-function} of direction $\alpha$ with initial 
level $\kappa$ if $\tau^{(n)}(x)=0$ for 
$n<0$ and $\tau^{(0)}(x)\not\equiv 0$. 

In what follows, we use the notation
\begin{equation}
\Psi_n(t;p,q)=I_n(t;p,q)\,
\prod_{0\le i<j\le 7}\Gamma(t_it_j;p,q,q). 
\end{equation}
As we have seen in \eqref{eq:transIn2}, 
this function satisfies 
\begin{equation}
\Psi_n(t;p,q)=\Psi_n(p^{\shf}q^{\shf(2-n)}t^{-1};p,q)
\end{equation}
under the balancing condition 
$t_0t_1\cdots t_7=p^2q^{4-2n}$. 

\begin{thm}\label{thm:8A}
With respect to the directions $\pm\phi$ and the 
initial levels $\pm\varpi$ $(e(\varpi)=p)$, 
the following four functions 
are 
$W(E_7)$-invariant 
hypergeometric $\tau$-functions.
\par\smallskip\noindent
\quad$(0)$ $\tau_{++}(x)$ on 
$D_{\phi,\varpi}=
\bigsqcup_{n\in\bZ} H_{\phi,\varpi+n\delta}$ $:$
\begin{equation}\label{eq:hgtau++}
\tau_{++}^{(n)}(x)
=p^{\binom{n}{2}}e(-nQ(x))\Psi_n(q^{\shf(1-n)}u;p,q)
=p^{\binom{n}{2}}e(-nQ(x))\Psi_n(p^{\shf}q^{\shf}u^{-1};p,q).
\end{equation}
\quad$(1)$ $\tau_{+-}(x)$ on 
$D_{\phi,-\varpi}=
\bigsqcup_{n\in\bZ} H_{\phi,-\varpi+n\delta}$ $:$
\begin{equation}\label{eq:hgtau+-}
\tau_{+-}^{(n)}(x)
=\Psi_n(p^{\shf}q^{\shf(1-n)}u;p,q)
=\Psi_n(q^{\shf}u^{-1};p,q).
\end{equation}
\quad$(2)$ $\tau_{-+}(x)$ on 
$D_{-\phi,\varpi}=
\bigsqcup_{n\in\bZ} H_{-\phi,\varpi+n\delta}
=\bigsqcup_{n\in\bZ} H_{\phi,-\varpi-n\delta}
$ $:$
\begin{equation}\label{eq:hgtau-+}
\tau_{-+}^{(n)}(x)
=
p^{\binom{n}{2}}e(-nQ(x))
\Psi_n(p^{\shf}q^{\shf}u;p,q)
=
p^{\binom{n}{2}}e(-nQ(x))
\Psi_n(q^{\shf(1-n)}u^{-1};p,q). 
\end{equation}
\quad$(3)$ $\tau_{--}(x)$ on $D_{-\phi,-\varpi}
=\bigsqcup_{n\in\bZ}H_{-\phi,-\varpi+n\delta}
=\bigsqcup_{n\in\bZ}H_{\phi,\varpi-n\delta}$ $:$ 
\begin{equation}\label{eq:hgtau--}
\tau_{--}^{(n)}(x)
=\Psi_n(q^{\shf}u;p,q)
=\Psi_n(p^{\shf}q^{\shf(1-n)}u^{-1};p,q).
\end{equation}
\end{thm}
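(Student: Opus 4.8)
The plan is to derive all four functions from the single $W(E_7)$-invariant hypergeometric $\tau$-function $\tau_{++}(x)$ on $D_\varpi=D_{\phi,\varpi}$, whose existence and explicit form \eqref{eq:hgtau++} are already supplied by Theorems \ref{thm:6A} and \ref{thm:6C} (and displayed at the end of Section \ref{sec:6}), by composing the three transformations of Theorem \ref{thm:2B}. The direction $\phi$ is reversed by the element $w_0=-1\in W(E_8)$ (the longest element, which acts on $V$ as $-1$); the initial level $\varpi$ is turned into $-\varpi$ by a translation along $\phi$ by the period $\mp\varpi$ (since $\ipr{\phi}{\phi}=2$, such a translation shifts $\varphi=\ipr{\phi}{\cdot}$ by $\mp2\varpi$) followed by a Gaussian normalization.

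First I would handle the two pairs related by direction reversal. Set $\tau_{-+}(x):=\tau_{++}(-x)$ and $\tau_{--}(x):=\tau_{+-}(-x)$. By Theorem \ref{thm:2B}(2) with $w=w_0=-1$ these are ORG $\tau$-functions on $-D_{\phi,\varpi}=D_{-\phi,\varpi}$ and on $-D_{\phi,-\varpi}=D_{-\phi,-\varpi}$ respectively; since $w_0=-1$ is central in $W(E_8)$ it commutes with $W(E_7)$, so $W(E_7)$-invariance is preserved, and since $w_0$ carries the $n$th hyperplane of the source onto the $n$th hyperplane of the target, the vanishing $\tau^{(n)}\equiv0$ for $n<0$ is preserved. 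Using $Q(-x)=Q(x)$ and the componentwise inversion $u(-x)=u^{-1}$, formula \eqref{eq:hgtau++} turns into \eqref{eq:hgtau-+} and \eqref{eq:hgtau+-} into \eqref{eq:hgtau--}.

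Next I would produce $\tau_{+-}$ from $\tau_{++}$ via the period translation of Theorem \ref{thm:2B}(3) with $v=\phi\in P$ and $\omega=-\varpi\in\Omega$. This yields the ORG $\tau$-function $\widetilde\tau(x)=e(S(x;\phi,-\varpi))\,\tau_{++}(x+\varpi\phi)$ on $D_{\phi,\varpi}-\varpi\phi=D_{\phi,-\varpi}$, which still vanishes in degrees $n<0$. On $H_{\phi,-\varpi+n\delta}$, writing $y=x+\varpi\phi\in H_{\phi,\varpi+n\delta}$, one has $u(y)=p^{\shf}u$ and $Q(y)=Q(x)+n\varpi$, while $\eta_{-\varpi}=-\eta_\varpi=1$ (recall $\eta_\varpi=-1$ from Section \ref{sec:6}) makes $S(x;\phi,-\varpi)$ an explicit quadratic polynomial in $x$. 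Substituting these into \eqref{eq:hgtau++} and re-expressing $n$ through $n\delta=\ipr{\phi}{x}+\varpi$, I expect all the exponential prefactors to collect into a single factor $e\big(k\ipr{x}{x}+\ipr{w}{x}+c\big)$ with $w$ a scalar multiple of $\phi$; stripping it off by Theorem \ref{thm:2B}(1) leaves exactly $\tau_{+-}^{(n)}(x)=\Psi_n(p^{\shf}q^{\shf(1-n)}u;p,q)$, which is \eqref{eq:hgtau+-}. Then $W(E_7)$-invariance is automatic ($\phi$, $\ipr{x}{x}$, $\ipr{\phi}{x}$ being $W(E_7)$-invariant) and $\tau_{+-}^{(0)}=\prod_{0\le i<j\le7}\Gamma(pq\,u_iu_j;p,q,q)\not\equiv0$, so $\tau_{+-}$ is a $W(E_7)$-invariant hypergeometric $\tau$-function.

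Finally, the equality of the two displayed expressions in each of \eqref{eq:hgtau++}--\eqref{eq:hgtau--} follows from the symmetry $\Psi_n(t;p,q)=\Psi_n(p^{\shf}q^{\shf(2-n)}t^{-1};p,q)$ recorded just before the theorem (a consequence of \eqref{eq:transIn2}), valid under the balancing condition $t_0t_1\cdots t_7=p^2q^{4-2n}$; one checks directly that each of the four arguments satisfies this identity on its hyperplane (for instance $u_0u_1\cdots u_7=p^{-2}q^{2n}$ on $H_{\phi,-\varpi+n\delta}$), and applying the symmetry produces the inversion $u\leftrightarrow u^{-1}$ in each case. The step I expect to be the main obstacle is precisely the bookkeeping in the previous paragraph: one must verify that the $n$- and $x$-dependent exponential factors produced by the period translation (coming from $S(x;\phi,-\varpi)$, from $Q(x+\varpi\phi)$, and from the rescaling $u\to p^{\shf}u$) assemble exactly into the Gaussian-times-exponential-linear shape that Theorem \ref{thm:2B}(1) can absorb, so that the normalized function is literally the clean $\Psi_n$-expression in the statement.
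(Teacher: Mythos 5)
Your proposal is correct and follows essentially the same route as the paper: the paper likewise obtains $\tau_{+-}$ from $\tau_{++}$ by the period translation of Theorem \ref{thm:2B}\,(3) along $\phi$ (with the same explicit bookkeeping of $S(x;\mp\phi,\pm\varpi)$, $Q(x+\phi\varpi)$ and $\eta_\varpi=-1$ that you flag as the main step, the resulting prefactor being absorbed by Theorem \ref{thm:2B}\,(1)), obtains $\tau_{-+}$ and $\tau_{--}$ by replacing $x$ with $-x$, and reads off the equality of the two displayed expressions from the symmetry $\Psi_n(t;p,q)=\Psi_n(p^{\shf}q^{\shf(2-n)}t^{-1};p,q)$ recorded before the theorem.
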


\begin{proof}{Proof}
The function $\tau_{++}(x)$ is 
the $W(E_8)$-invariant hypergeometric 
$\tau$-function of direction $\phi$ with initial level $\varpi$ 
discussed in previous sections. 
We apply 
the translation by $-\phi\,\varpi$ 
to $\tau(x)=\tau_{++}(x)$ 
as in Theorem 
\ref{thm:2B}, (3) to construct a $\tau$-function 
\begin{equation}
\begin{split}
\widetilde{\tau}(x)=e(S(x;-\phi,\varpi))\tau(x+\phi\,\varpi)
\end{split}
\end{equation}
on $D_{\phi,\varpi}-\phi\,\varpi=D_{\phi,-\varpi}$. 
We look at the prefactor of $\widetilde{\tau}^{(n)}(x)$
($x\in H_{\phi,-\varpi+n\delta})$: 
\begin{equation}\label{eq:prefactor}
e(S(x;-\phi,\varpi))p^{\binom{n}{2}}e(-nQ(x))
=
e(S(x;-\phi,\varpi))e(-nQ(x)+\tbinom{n}{2}\varpi). 
\end{equation}
Noting that $\eta_{\varpi}=-1$ in this case, 
we compute
\begin{equation}
\begin{split}
S(x;-\phi,\varpi)&=\tfrac{1}{2\delta^2}\ipr{\phi}{x}
\ipr{x}{x+\phi\varpi}
=\tfrac{1}{2\delta^2}\ipr{\phi}{x}\ipr{x}{x}
+\tfrac{\varpi}{2\delta^2}\ipr{\phi}{x}^2
\\
&=\tfrac{n}{2\delta}\ipr{x}{x}+\tfrac{n^2}{2}\varpi
-\tfrac{\varpi}{2\delta^2}\ipr{x}{x}
-\tfrac{n}{2}\tfrac{\varpi^2}{\delta}
+\tfrac{\varpi^3}{2\delta^2}. 
\end{split}
\end{equation}
for $\ipr{\phi}{x}=-\varpi+n\delta$. 
Combining this with 
\begin{equation}
-nQ(x+\phi\varpi)
=-\tfrac{n}{2\delta}\ipr{x+\phi \varpi}{x+\phi\varpi}
=
-\tfrac{n}{2\delta}\ipr{x}{x}
-n^2\varpi,
\end{equation}
we obtain
\begin{equation}
\begin{split}
S(x;-\phi,\varpi)-nQ(x)+\tbinom{n}{2}\varpi
&=
-\tfrac{\varpi}{2\delta^2}\ipr{x}{x}
-n\tfrac{\varpi(\varpi+\delta)}{2\delta}
+\tfrac{\varpi^3}{2\delta^2}
\\
&=
-\tfrac{\varpi}{2\delta^2}\ipr{x}{x}
-\tfrac{\varpi(\varpi+\delta)}{2\delta^2}
\ipr{\phi}{x}
-\tfrac{\varpi^2}{2\delta}
\end{split}
\end{equation}
by $n=(\ipr{\phi}{x}+\varpi)/\delta$.  
Hence the prefactor \eqref{eq:prefactor} for 
$\widetilde{\tau}^{(n)}(x)$ is determined as
\begin{equation}
e\big(-\tfrac{\varpi}{2\delta^2}\ipr{x}{x}
-\tfrac{\omega(\omega+\delta)}{2\delta^2}
\ipr{\phi}{x}
-\tfrac{\omega^2}{2\delta}
\big).
\end{equation}
This factor does not effect on the Hirota equations, 
and can be eliminated by Theorem \ref{thm:2B}, (1).  
We thus obtain the $\tau$-function $\tau_{+-}(x)$ 
of \eqref{eq:hgtau+-}
by replacing $e(x)=u$ in the last two factors 
of $\tau_{++}(x)$ 
with 
$e(x+\phi\omega)=p^\phi u=p^{\shf}u$.  
The other two functions 
$\tau_{-+}(x)$ and $\tau_{--}(x)$ 
are obtained by 
replacing $x$ in $\tau_{++}(x)$
and $\tau_{+-}(x)$ with $-x$, respectively. 
\end{proof}

We now apply Theorem \ref{thm:2B}, (3) for constructing 
hypergeometric $\tau$-functions with initial level $0$. 
Let $a\in P$ be a vector with $\ipr{\phi}{a}=1$, so that
\begin{equation}
D_{\phi,-\varpi}+a\varpi=D_{\phi,0}
=\bigsqcup_{n\in\bZ} H_{\phi,n\delta}. 
\end{equation}
Then, from 
\begin{equation}
\tau^{(n)}_{+-}(x)
=\Psi_n(p^{\shf}q^{\shf(1-n)}u;p,q)
=\Psi_n(q^{\shf}u^{-1};p,q)
\end{equation}
we obtain 
a hypergeometric $\tau$-function 
\begin{equation}
\begin{split}
\tau_{a}(x)&=e(S(x;a,\varpi))\tau_{+-}(x-a\varpi)\\
&=
e(S(x;a,\varpi))
\Psi_n(p^{-a}p^{\shf}q^{\shf(1-n)}u;p,q)
\\
&=
e(S(x;a,\varpi))
\Psi_n(p^{a}q^{\shf}u^{-1};p,q)
\end{split}
\end{equation}
on $D_{\phi,0}$, where
\begin{equation}
S(x;a,\varpi)=-\tfrac{1}{2\delta^2}\ipr{a}{x}\ipr{x}{x-a\varpi}. 
\end{equation}
When $a\in \Delta(E_8)$, namely $\ipr{a}{a}=2$, there 
are 56 choices of $a$ with $\ipr{\phi}{a}=1$:
\begin{equation}
a=v_k+v_l,\quad \phi-v_k-v_l\quad(0\le k<l\le 7).  
\end{equation}
Those $\tau$-functions 
$\tau_{a}(x)$ on $D_{\phi,0}$ correspond 
to the 56 hypergeometric $\tau$-functions in the 
trigonometric case studied by Masuda \cite{Masuda2011}. 

In general, let $\omega=k+l\varpi
\in\Omega$ $(k,l\in\bZ)$
a period, and take two vectors $a,b\in P$ with 
$\ipr{\phi}{a}=k$, $\ipr{\phi}{b}=l+1$.  Then
\begin{equation}
e(S(x;b,\varpi))\tau_{+-}(x-a-b\varpi)
\end{equation}
is a hypergeometric 
$\tau$-function on $D_{\phi,\omega}$.  
Furthermore, 
let $\alpha\in\Delta(E_8)$ be an arbitrary root 
of the root system of type $E_8$, and choose 
a $w\in W(E_8)$ such that $\alpha=w.\phi$.  
Then 
\begin{equation}
w(e(S(x;b,\varpi))\tau_{+-}(x-a-b\varpi))
=e(S(w^{-1}.x;b,\varpi))\tau_{+-}(w^{-1}.x-a-b\varpi)
\end{equation}
provides a hypergeometric $\tau$-function 
on $D_{\alpha,\omega}$ in the direction $\alpha$ 
with initial level $\omega.$  

\section{Relation to the framework of point configurations}
\label{sec:9}
In this section, we give some remarks on how 
ORG $\tau$-functions are related to the notion
of lattice $\tau$-functions associated with 
the configuration of generic nine points in $\bP^2$. 

\subsection{\boldmath 
Realization of the affine root system $E^{(1)}_8$}

We first recall from \cite{KMNOY2006} 
the realization of the affine root system 
of type $E^{(1)}_8$ in the context of 
the configuration of generic nine points in $\bP^2$
(see also Dolgachev-Ortland \cite{DO1989}).  
We consider the 10-dimensional complex vector space 
\begin{equation}
\frh=\frh_{3,9}=
\bC e_0\oplus\bC e_1\oplus\cdots\oplus \bC e_9
\end{equation}
with basis $\br{e_0,e_1,\ldots,e_9}$, 
and define a scalar product 
(nondegenerate symmetric $\bC$-bilinear form) 
$\ipr{\cdot}{\cdot}: \frh\times \frh\to\bC$ by 
\begin{equation}
\begin{array}{c}
\ipr{e_0}{e_0}=-1,\quad \ipr{e_j}{e_j}=1\quad(j\in\br{1,\ldots,9}),
\\[4pt]
\ipr{e_i}{e_j}=0\quad(i,j\in\br{0,1,\ldots,9};\ i\ne j). 
\end{array}
\end{equation}
This vector space 
is regarded 
as the complexification of the lattice
\begin{equation}
\begin{split}
L=L_{3,9}=
\bZ e_0\oplus \bZ e_1\oplus
\bZ e_2\oplus\cdots\oplus \bZ e_9\subset \frh=\frh_{3,9}
\end{split}
\end{equation}
endowed with the symmetric $\bZ$-bilinear form 
$\ipr{\cdot}{\cdot}: L\times L\to \bZ$.  
In geometric terms, $L=L_{3,9}$ is 
the {\em Picard lattice} associated  with 
the blowup of $\bP^2$ at generic nine points 
$p_1,\ldots,p_9$.  
The vectors $e_0$ and $e_1,\ldots,e_9$ denote 
 the class of lines in $\bP^2$ and 
those of exceptional divisors 
corresponding to $p_1,\ldots,p_9$ respectively, 
and the scalar product 
$\ipr{\cdot}{\cdot}$ on $L$ represents 
the intersection form of divisor classes multiplied by $-1$. 

The root lattice of type $E^{(1)}_8$ is realized as
\begin{equation}
Q(E^{(1)}_8)=\bZ\alpha_0\oplus
\bZ\alpha_1\oplus
\cdots\oplus
\bZ\alpha_8\subset L, 
\end{equation}
where 
the {\em simple roots\/} 
$\alpha_0,\alpha_1,\ldots,\alpha_8\in\frh$ 
are defined by 
\begin{equation}
\alpha_0=e_0-e_1-e_2-e_3,\quad 
\alpha_j=e_j-e_{j+1}\quad(j=1,\ldots,8)
\end{equation}
with Dynkin diagram 
\begin{equation}\label{eq:DynkinE18}
\begin{picture}(170,30)(0,20)
\multiput(0,20)(24,0){8}{\circle{4}}
\multiput(2,20)(24,0){7}{\line(1,0){20}}
\put(48,44){\circle{4}}\put(48,22){\line(0,1){20}}
\put(-4,8){\small $\alpha_1$}
\put(20,8){\small $\alpha_2$}
\put(44,8){\small $\alpha_3$}
\put(68,8){\small $\alpha_4$}
\put(92,8){\small $\alpha_5$}
\put(116,8){\small $\alpha_6$}
\put(140,8){\small $\alpha_7$}
\put(160,8){\small $\alpha_8$}
\put(52,46){\small $\alpha_0$}
\end{picture}
\vspace{10pt}
\end{equation}
(These $\alpha_j$ are called the {\em coroots} 
$h_j$ in \cite{KMNOY2006}).
Note also that 
\begin{equation}
\begin{split}
c&=3e_0-e_1-\cdots-e_9\\
&=3\alpha_0+2\alpha_1+4\alpha_2
+6\alpha_3+5\alpha_4+4\alpha_5+3\alpha_6+2\alpha_7+\alpha_8
\in Q(E^{(1)}_8)
\end{split}
\end{equation}
satisfies $\ipr{c}{\alpha_j}=0$ for $j=0,1,\ldots,8$. 
Denoting by $\frh^\ast=\Hom_{\bC}(\frh,\bC)$ the 
dual space of $\frh$, we take the linear functions 
$\vep_{j}=\ipr{e_j}{\,\cdot}\in\frh^\ast$ 
$(j=0,1,\ldots,9)$ so that 
$\frh^\ast=\bC\vep_0\oplus\bC\vep_1\oplus
\cdots\oplus\bC\vep_9$, and 
regard $\vep=(\vep_0;\vep_1,\ldots,\vep_9)$ 
as the canonical coordinates for $\frh$.  
We often identify $\frh^\ast$ with $\frh$  
through the isomorphism 
$\nu:\ \frh\isoto\frh^\ast$ 
defined by 
$\nu(h)=\ipr{h}{\,\cdot}$ ($h\in\frh$), 
and denote 
the induced scalar product 
by the same symbol $\ipr{\cdot}{\cdot}$. 
When we regard the simple roots 
as $\bC$-linear functions 
on $\frh$, they are expressed as 
$\nu(\alpha_0)=\vep_0-\vep_1-\vep_2-\vep_3$ and
$\nu(\alpha_j)=\vep_j-\vep_{j+1}$ $(j=1,\ldots,8)$.
Setting  
$\delta=\ipr{c}{\,\cdot}=3\vep_0-\vep_1-\cdots-\vep_9\in\frh^{\ast}$,
we regard below this null root $\delta\in\frh^\ast$ as  
the scaling unit for difference equations.  

The root lattice of type $E_8$ is specified 
as $Q(E_8)=\bZ \alpha_0\oplus\bZ \alpha_1\oplus\cdots\oplus\bZ \alpha_7\subset Q(E^{(1)}_8)$.  
The vector space $\frh$ is decomposed accordingly as 
\begin{equation}\label{eq:h0cd}
\frh=\overset{\scirc}{\frh}\oplus\bC c\oplus\bC d, 
\quad
\overset{\scirc}{\frh}=
\bC\alpha_0\oplus\bC\alpha_1\oplus
\cdots\oplus\bC\alpha_7,\quad d=-e_9-\hf c, 
\end{equation}
where
\begin{equation}
\ipr{c}{h}=0,\ \ 
\ipr{d}{h}=0\quad
(h\in\overset{\scirc}{\frh});\quad
\ipr{c}{c}=0, \ \ \ipr{c}{d}=1, \ \ \ipr{d}{d}=0.
\end{equation}
The 8-dimensional subspace 
$\overset{\scirc}{\frh}\subset \frh$ can be identified 
with the vector space 
$V=\bC^8$ 
that we have used throughout this paper 
for the realization of the root lattice $P=Q(E_8)$.  
Noting that 
$\overset{\scirc}{\frh}=\brm{h\in \frh}{\ipr{c}{h}=0,\ \ipr{d}{h}=0}$
and $Q(E_8)=L\cap\overset{\scirc}{\frh}$,
we define the orthonormal basis 
$\br{v_0,v_1,\ldots,v_7}$ for $\overset{\scirc}{\frh}$ by  
\begin{equation}
v_j=e_j-\hf(e_0-e_9)+\hf c\quad (j=1,\ldots,8),\quad
v_0=-v_8. 
\end{equation}
Then the highest root and the 
simple roots of type $E_8$ are 
expressed as 
\begin{equation}
\begin{split}
&
\phi=\hf(v_0+v_1+\cdots+v_7)=c-\alpha_8, 
\\
&\alpha_0=\phi-v_0-v_1-v_2-v_3,\quad
\alpha_j=v_{j}-v_{j+1}\quad(j=1,\ldots,7)
\end{split}
\end{equation}
respectively, 
which recovers the situation of Section \ref{sec:1}. 
In what follows, we identify $\overset{\scirc}{\frh}$ 
with $V$ 
through this orthonormal 
basis $\br{v_0,v_1,\ldots,v_7}$. 
The corresponding $\bC$-linear functions 
$x_j=\ipr{v_j}{\,\cdot}\in\frh^\ast$ are realized as  
\begin{equation}
x_j=\vep_j-\hf(\vep_0-\vep_9)+\hf\delta
\quad(j=1,\ldots,8),
\quad
x_0=-x_8.
\end{equation}
\par\medskip
For each $\alpha\in\frh$ with 
$\ipr{\alpha}{\alpha}\ne 0$, 
the {\em reflection} 
$r_\alpha: \frh\to\frh$ with respect to $\alpha$ 
is defined in the standard way by  
\begin{equation}\label{eq:ralphafrh}
r_{\alpha}(h)=h-\ipr{\alpha^\vee}{h}\alpha\quad(h\in\frh),
\quad \alpha^\vee=2 \alpha/\ipr{\alpha}{\alpha}. 
\end{equation}
The affine Weyl group 
$W(E^{(1)}_8)=\la s_0, s_1,\ldots,s_8\ra$
of type $E^{(1)}_8$ 
(Coxeter group associated with diagram 
\eqref{eq:DynkinE18})
then acts faithfully 
on $\frh$ through the {\em simple reflections} 
$s_j=r_{\alpha_j}$ 
($j=0,1,\ldots.8$). 
We remark that this group contains the symmetric group 
$\mathfrak{S}_9=\la s_1,\ldots,s_8\ra$ as a subgroup 
which permutes $e_1,\ldots,e_9$.  
The affine Weyl group $W(E^{(1)}_8)$
also acts on the dual 
space $\frh^\ast$ through 
$s_j=r_{\alpha_j}: \frh^\ast\to\frh^\ast$ defined 
in the same way as \eqref{eq:ralphafrh}.  
These actions of $W(E^{(1)}_8)$ on $\frh$ and $\frh^\ast$ 
leave the two scalar products invariant.
Note also that $c\in\frh$ and $\delta\in\frh^\ast$ are 
invariant under the action of $W(E^{(1)}_8)$. 

Setting $\frh_0=\brm{\alpha\in\frh}{\ipr{c}{\alpha}=0}$, 
for each 
$\alpha\in\frh_0$ we define the {\em Kac translation} (\cite{Kac1990}) 
$T_{\alpha}: \frh\to \frh$ with respect to $\alpha$ 
by 
\begin{equation}
T_{\alpha}(h)=h+\ipr{c}{h}\alpha
-\big(\hf\ipr{\alpha}{\alpha}\ipr{c}{h}+\ipr{\alpha}{h}\big)c
\quad(h\in\frh). 
\end{equation}
It is directly verified that 
these $\bC$-linear transformations 
$T_{\alpha}\in{\rm GL}(\frh)$ ($\alpha\in\frh_{0}$) 
satisfy 
\begin{equation}
\begin{array}{cl}
(1) & \ipr{T_{\alpha}(h)}{T_{\alpha}(h')}=\ipr{h}{h'}
\quad(\alpha\in\frh_0;\ h,h'\in\frh),
\\[4pt]
(2) & T_{\alpha}T_{\beta}=T_{\beta}T_{\alpha}=T_{\alpha+\beta}
\quad(\alpha,\beta\in\frh_0),\quad
T_{kc}={\rm id}_{\frh}\quad(k\in\bC),
\\[1pt]
(3) & 
wT_{\alpha}w^{-1}=T_{w.\alpha}
\quad
(\alpha\in\frh_0, w\in W(E^{(1)}_8). 
\end{array}
\end{equation}
Note also that 
\begin{equation}
T_{\alpha}(h)=h-\ipr{\alpha}{h}c
\quad(h\in\frh, \ipr{c}{h}=0),
\quad
T_{\alpha}(c)=c
\end{equation}
for any $\alpha\in\frh_0$. 
For a $\bZ$-submodule $Q$ of $\frh_0$ given, 
we denote 
by $T(Q)\subset {\rm GL}(\frh)$
the abelian subgroup 
of Kac translations $T_{\alpha}$ ($\alpha\in Q$). 
We remark that, 
if $\alpha\in\frh_0$ and $\ipr{\alpha}{\alpha}\ne 0$, then 
the Kac translation $T_{\alpha}$ is expressed in the form 
$T_{\alpha}=r_{c-\alpha^\vee}r_{\alpha^\vee}$ 
as a product of two reflections.  
This implies in particular that the Kac translations $T_{\alpha_j}$ ($j=0,1,\ldots,8$) by simple roots belong to the affine 
Weyl group $W(E^{(1)}_{8})=\la s_0,s_1,\ldots,s_8\ra\subset 
{\rm GL}(\frh)$.  
It is known (\cite{Kac1990}) in fact that 
$W(E^{(1)}_8)$ splits into 
the semidirect product 
\begin{equation}
W(E^{(1)}_8)=T(Q(E_8))\rtimes W(E_8),\quad
W(E_8)=\la s_0,s_1,\ldots,s_7\ra. 
\end{equation}
We remark here that 
the linear action of $W(E^{(1)}_8)$ on $\frh$ 
extends to a bigger group
$T(\frh_0)W(E^{(1)}_8)=T(V)\rtimes W(E_8)$ including 
the abelian group $T(V)$ of Kac translations 
with respect to 
$V=\bC\otimes_{\bZ}Q(E_8)$.  
Note that 
$T(V)\rtimes W(E_8)$ acts also on $\frh^\ast$ 
so that $\nu: \frh\isoto\frh^{\ast}$ intertwines 
its linear actions on $\frh$ and $\frh^\ast$.

\par\medskip
Before proceeding further, we clarify 
how the linear actions of Kac translations on $\frh$ 
are related to the affine-linear actions of 
parallel translations on $V$.  
Note first that, 
for each $\kappa\in\bC$, the hyperplane 
\begin{equation}
\frh_{\kappa}=\brm{h\in\frh}{\ipr{c}{h}=\kappa}\subset \frh 
\end{equation}
is stable by $W(E^{(1)}_8)$ and by $T(\frh_0)$. 
On this hyperplane 
$\frh_\kappa$ of level $\kappa$, 
the null root $\delta=\ipr{c}{\cdot}\in\frh^\ast$ is 
identified with the constant function $\delta=\kappa$. 
For each $(\mu,\kappa)\in\bC\times\bC^\ast$, 
we define a quadratic mapping $\gamma_{(\mu,\kappa)}: V\to\frh_{\kappa}$ by
\begin{equation}
\gamma_{(\mu,\kappa)}(x)=T_{\kappa^{-1}x}(\kappa d)-\mu c
=x-(\tfrac{1}{2\kappa}\ipr{x}{x}+\mu)c+\kappa d\quad (x\in V).  
\end{equation}
These mappings  $\gamma_{(\mu,\kappa)}$ 
induce the 
parametrization 
$\gamma_{\kappa}: V\times\bC\isoto
\frh_{\kappa}: (x;\mu)\mapsto\gamma_{(\mu,\kappa)}(x) 
$ of $\frh_{\kappa}$ 
for each $\kappa\in\bC^\ast$, 
as well as the isomorphism 
$\gamma: V\times\bC\times\bC^\ast
\isoto\frh\backslash\frh_0$
of affine varieties such that 
\begin{equation}
\gamma(x;\mu,\kappa)
=x-(\tfrac{1}{2\kappa}\ipr{x}{x}+\mu)c+\kappa d
\quad(x\in V, \mu\in\bC, \kappa\in\bC^\ast). 
\end{equation}
Furthermore, this isomorphism is equivariant 
with respect to the action of the group 
$T(V)\rtimes W(E_8)$ on 
$V\times\bC\times\bC^\ast$ specified by
\begin{equation}
T_{v}w.(x;\mu,\kappa)=(w.x+\kappa v;\mu,\kappa)
\quad(v\in V, w\in W(E_8)). 
\end{equation}
Through this isomorphism, 
the coordinates $(x;\mu,\kappa)=(x_0,x_1,\ldots,x_7;\mu,\kappa)$ 
for $V\times\bC\times\bC^\ast$ and $\vep=(\vep_0;\vep_1,\ldots,\vep_9)$ for 
$\frh\backslash \frh_{0}$ are transformed into each other 
through 
\begin{equation}
\begin{split}
&x_j=\vep_j-\hf(\vep_0-\vep_9)+\hf\delta\quad(j=1,\ldots,8),
\quad x_0=-x_8,
\\
&\mu
=-\tfrac{1}{2\delta}\ipr{\vep}{\vep}, 
\quad \kappa=\delta,
\end{split}
\end{equation}
where $\ipr{\vep}{\vep}=-\vep_0^2+\vep_1^2+\cdots+\vep_9^2$, 
and by 
\begin{equation}
\begin{split}
\vep_0&=2x_0-2\ipr{\phi}{x}
+3(\tfrac{1}{2\kappa}\ipr{x}{x}+\mu+\hf\kappa),
\\
\vep_j&=x_j+x_0-\ipr{\phi}{x}+\tfrac{1}{2\kappa}\ipr{x}{x}+\mu+\hf\kappa
\quad(j=1,\ldots,7),
\\
\vep_8&=-\ipr{\phi}{x}+\tfrac{1}{2\kappa}\ipr{x}{x}+\mu+\hf\kappa,\quad
\vep_9=\tfrac{1}{2\kappa}\ipr{x}{x}+\mu-\hf\kappa,
\end{split}
\end{equation}
where $\ipr{\phi}{x}=\hf(x_0+x_1+\cdots+x_7)$, 
$\ipr{x}{x}=x_0^2+x_1^2+\cdots+x_7^2$.  

\subsection{\boldmath 
Lattice $\tau$-functions vs. ORG $\tau$-functions}

We now consider the $W(E^{(1)}_8)$-orbit 
\begin{equation}
M=M_{3,9}=W(E^{(1)}_8)\br{e_1,\ldots,e_9}
=W(E^{(1)}_8)e_9
\subset L=L_{3,9}
\end{equation}
in the Picard lattice.  
Noting that $e_9\in M$ is $W(E_8)$-invariant, we 
see that the natural mapping 
$W(E^{(1)}_8)\to M: w\mapsto w.e_9$ induces the 
bijection $Q(E_8)\isoto M: \alpha\mapsto T_{\alpha}.e_9$.  
The orbit $M=W(E^{(1)}_8)e_9$ is intrinsically characterized as 
\begin{equation}
M=\brm{\Lambda\in L}
{\ \ipr{\Lambda}{\Lambda}=1,\ \ipr{c}{\Lambda}=-1}.  
\end{equation}
To see this,  
suppose that $\Lambda\in L$ satisfies 
$\ipr{\Lambda}{\Lambda}=1$ and $\ipr{c}{\Lambda}=-1$. 
Then, the difference $\beta=\Lambda-e_9$ satisfies 
$\ipr{\beta}{\beta}+2\ipr{e_9}{\beta}=0$ and 
$\ipr{c}{\beta}=0$. 
This implies 
$\beta=\Lambda-e_9\in Q(E^{(1)}_8)$ 
and $T_{\beta}.e_9=e_9+\beta=\Lambda$.  
Taking $\alpha=\beta+\ipr{e_9}{\beta}c\in Q(E_8)$, 
we see that $\Lambda$ is uniquely expressed in the form 
\begin{equation}
\Lambda=e_9+\alpha+\hf\ipr{\alpha}{\alpha}c,
\quad \alpha\in Q(E_8), 
\end{equation}
and hence 
$\Lambda=T_{\Lambda-e_9}^{-1}.e_9=T_{\alpha}^{-1}.e_9$.
We remark that $\alpha$ is the unique 
element in $Q(E_8)$ such that $\Lambda-e_9\equiv \alpha$ 
$({\rm mod}\ \bC c)$, which we call the {\em classical part} of 
$\Lambda-e_9\in\frh_0$.

In \cite{KMNOY2006}, a system of 
{\em lattice 
$\tau$-functions} 
associated with the configurations of generic nine points in 
$\bP^2$ 
is defined as 
a family 
of dependent variables 
$\tau_{\Lambda}$ 
indexed by $\Lambda\in M$ 
which admit an action of 
the affine Weyl group $W(E^{(1)}_8)$ 
such that 
\begin{equation}\label{eq:wtauLambda}
w.\tau_{\Lambda}=\tau_{w.\Lambda}\qquad 
(\Lambda\in M,\ w\in W(E^{(1)}_8))
\end{equation}
and satisfy the quadratic relations 
\begin{equation}\label{eq:Hirota39}
[\vep_{jk}][\vep_{jkl}]\tau_{e_i}\tau_{e_0-e_i-e_l}
+
[\vep_{ki}][\vep_{kil}]\tau_{e_j}\tau_{e_0-e_j-e_l}
+[\vep_{ij}][\vep_{ijl}]\tau_{e_k}\tau_{e_0-e_k-e_l}=0
\end{equation}
for all quadruples of mutually distinct  $i,j,k,l\in\br{1,\ldots,9}$,
where $\vep_{ij}=\vep_i-\vep_j$ and $\vep_{ijk}=\vep_0-\vep_i-\vep_j-\vep_k$.  
(Here we use the notation $\tau_{\Lambda}$ 
instead of $\tau(\Lambda)$ as in \cite{KMNOY2006} 
to make clear that $\Lambda$ 
is {\em not} an independent variable, but an index.)  
Under the condition \eqref{eq:wtauLambda}, 
$\tau_{e_9}$ is $W(E_8)$-invariant, and 
all the functions 
$\tau_{\Lambda}$ $(\Lambda\in M)$ are expressed as 
$\tau_{\Lambda}=T_{\Lambda-e_9}^{-1}.\tau$ 
in terms of a single 
$W(E_8)$-invariant function $\tau=\tau_{e_9}$. 
We also remark that, if equation \eqref{eq:Hirota39} holds 
for some quadruple of distinct $i,j,k,l\in\br{1,\ldots,9}$, then 
it holds for all quadruples as a result of 
the action of $\mathfrak{S}_9\subset W(E^{(1)}_8)$.
In the following, we use the notation
$\sigma_{h}=[\nu(h)]=[\ipr{h}{\cdot}]$ for $h\in\frh$, 
so that 
$\sigma_{e_{ij}}=[\vep_{ij}]$ and 
$\sigma_{e_{ijk}}=[\vep_{ijk}]$ 
where $e_{ij}=e_i-e_j$ and $e_{ijk}=e_0-e_i-e_j-e_k$. 
In this notation, equation \eqref{eq:Hirota39} is 
rewritten as 
\begin{equation}\label{eq:Hirota39A}
\sigma_{e_{jk}}\sigma_{e_{jkl}}
\tau_{e_i}\tau_{e_0-e_i-e_l}
+\sigma_{e_{ki}}\sigma_{e_{kil}}
\tau_{e_j}\tau_{e_0-e_j-e_l}
+\sigma_{e_{ij}}\sigma_{e_{ijl}}
\tau_{e_k}\tau_{e_0-e_k-e_l}=0. 
\end{equation}
As we will see below, equations \eqref{eq:Hirota39} 
can be rewritten in a $W(E^{(1)}_8)$-invariant form. 

To clarify the situation, let $X$ be a left 
$W(E^{(1)}_8)$-set. 
Noting that functions of the form 
$\sigma_{\alpha}=[\ipr{\alpha}{\cdot}]$ ($\alpha\in Q(E^{(1)}_8)$) are defined over $\overline{\frh}=\frh/\bC c$, 
we suppose that 
a $W(E^{(1)}_8)$-equivariant mapping 
$\gamma: X\to \overline{\frh}$ is given. 
Regarding those 
$\sigma_{\alpha}$
as functions defined on $X$ through 
$\gamma: X\to\overline{\frh}$, 
we can consider systems of lattice $\tau$-functions 
$\tau_{\Lambda}$ ($\Lambda\in M$) defined on $X$.  
In order to compare this notion with that of 
ORG $\tau$-functions on $X$, 
we assume that 
the extension $T(\hf Q(E_8))\rtimes W(E_8)$ 
of $W(E^{(1)}_8)$ 
acts on $X$ so that $\gamma$ is an equivariant mapping. 
For a function $\varphi$ defined on a subset $U\subseteq X$, 
we define the action 
of $w\in T(\hf Q(E_8))\rtimes W(E_8)$ on $\varphi$ 
to be the function $w.\varphi$ 
on $w.U$ such that 
$(w.\varphi)(x)=\varphi(w^{-1}.x)$ ($x\in w.U$). 

\begin{prop}\label{prop:9A}
Let $\tau$ be a $W(E_8)$-invariant function on $X$ 
and set $\tau_{\Lambda}=T_{\Lambda-e_9}^{-1}.\tau$
for each $\Lambda\in M$. 
Then the following three conditions are equivalent $:$
\par\smallskip\noindent
$({\rm a})$ The equation 
\begin{equation}\label{eq:Hirota39sA}
\begin{array}{c}
\sigma_{e_{jk}}\sigma_{e_{jkl}}
\tau_{e_i}\tau_{e_0-e_i-e_l}
+\sigma_{e_{ki}}\sigma_{e_{kil}}
\tau_{e_j}\tau_{e_0-e_j-e_l}
+\sigma_{e_{ij}}\sigma_{e_{ijl}}
\tau_{e_k}\tau_{e_0-e_k-e_l}=0
\end{array}
\end{equation}
holds for each 
quadruple of  
mutually distinct $i,j,k,l\in\br{1,\ldots,9}$. 
\newline
$({\rm b})$  The equation 
\begin{equation}\label{eq:Hirota39sB}
\sigma_{u_1-u_2}\sigma_{u_1-u_2'}
\tau_{u_0}\tau_{u_0'}
+
\sigma_{u_2-u_0}\sigma_{u_2-u_0'}
\tau_{u_1}\tau_{u_1'}
+
\sigma_{u_0-u_1}\sigma_{u_0-u_1'}
\tau_{u_2}\tau_{u_2'}=0
\end{equation}
holds for each sextuple 
of points $u_i,u_i'\in M$ $(i=0,1,2)$ such that
\begin{equation}
\begin{split}
u_0+u_0'=u_1+u_1'=u_2+u_2',
\quad
\ipr{u_i-u_i'}{u_j-u_j'}=4\delta_{ij}\quad(i,j=0,1,2).
\end{split}
\end{equation}
$({\rm c})$  The equation 
\begin{equation}\label{eq:Hirota39sD}
\sigma_{a_1\pm a_2}
T_{a_0}.\tau\,T_{a_0}^{-1}.\tau
+
\sigma_{a_2\pm a_0}
T_{a_1}.\tau\,T_{a_1}^{-1}.\tau
+
\sigma_{a_0\pm a_1}
T_{a_2}.\tau\,T_{a_2}^{-1}.\tau=0
\end{equation}
holds 
for each triple of vectors 
$a_0,a_1,a_2\in\hf Q(E_8)$
such that
\begin{equation}\label{eq:Hirota39sDCond}
\begin{split}
\ipr{a_i}{a_j}=\delta_{i,j},\quad {}\pm a_i\pm a_j\in Q(E_8)
\quad(i,j=0,1,2). 
\end{split}
\end{equation}
\end{prop}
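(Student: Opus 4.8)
The plan is to establish the cycle (a) $\Rightarrow$ (c) $\Rightarrow$ (b) $\Rightarrow$ (a); the substance is a dictionary identifying the quadratic relation (b) for a sextuple $(u_0,u_0',u_1,u_1',u_2,u_2')$ of points of $M$ with the ORG--Hirota relation (c) for a $C_3$-frame of $P=Q(E_8)$. First I would record the geometry of such a sextuple: from $\ipr{u_i}{u_i}=\ipr{u_i'}{u_i'}=1$ and $\ipr{u_i-u_i'}{u_i-u_i'}=4$ one gets $\ipr{u_i}{u_i'}=-1$, so the common midpoint $\mu:=\hf(u_0+u_0')=\hf(u_1+u_1')=\hf(u_2+u_2')$ satisfies $\ipr{\mu}{\mu}=\ipr{\mu}{u_i}=0$, and $w_i:=u_i-\mu$ obeys $\ipr{w_i}{w_j}=\delta_{ij}$, $\ipr{w_i}{\mu}=0$. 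Since $M-M\subseteq L\cap\frh_0=Q(E^{(1)}_8)=Q(E_8)\oplus\bZ c$, the vectors $2w_i=u_i-u_i'$, $w_i-w_j=u_i-u_j$ and $w_i+w_j=u_i-u_j'$ lie in $Q(E_8)\oplus\bZ c$; discarding the $\bZ c$-part (i.e.\ taking $\overset{\scirc}{\frh}$-components) I obtain $w_i^{\circ}\in\hf Q(E_8)$ with $\ipr{w_i^{\circ}}{w_j^{\circ}}=\delta_{ij}$ and $2w_i^{\circ},\ w_i^{\circ}\pm w_j^{\circ}\in Q(E_8)$, i.e.\ $\{w_0^{\circ},w_1^{\circ},w_2^{\circ}\}$ is a $C_3$-frame relative to $P$.

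The second step is to move the equation by the single transformation $g:=T_{\mu-e_9}$, which lies in $T(\hf Q(E_8))\rtimes W(E_8)$ (hence acts on $X$) because the classical part of $\mu-e_9$ is in $\hf Q(E_8)$. Using $T_{sc}=\mathrm{id}_{\frh}$ to drop $\bZ c$-components and $\tau_{\Lambda}=T_{\Lambda-e_9}^{-1}.\tau$, one computes $g.\tau_{u_k}=T_{w_k^{\circ}}^{-1}.\tau$ and $g.\tau_{u_k'}=T_{w_k^{\circ}}.\tau$, so the $\tau$-products of (b) become those of (c). For the coefficients, $g.\sigma_{\beta}$ is the function $h\mapsto[\ipr{\beta}{h}+\ipr{c}{h}\,\ipr{\beta}{e_9-\mu}]$ for $\beta\in Q(E^{(1)}_8)$; taking $\beta=u_i-u_j$ (resp.\ $u_i-u_j'$), the term $\ipr{c}{h}\,\ipr{\beta}{e_9-\mu}$ exactly cancels the $\ipr{c}{h}$-linear term carried by the $\bZ c$-component of $\beta$, because $\ipr{\beta}{\mu}=0$ (first paragraph) and $\ipr{\beta}{e_9}$ is $-1$ times the $c$-coefficient of $\beta$ (using $\ipr{\alpha}{e_9}=0$ for $\alpha\in Q(E_8)$ and $\ipr{c}{e_9}=-1$). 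Thus $g.\sigma_{u_i-u_j}=\sigma_{w_i^{\circ}-w_j^{\circ}}$ and $g.\sigma_{u_i-u_j'}=\sigma_{w_i^{\circ}+w_j^{\circ}}$, so $g$ carries the left side of (b) for $(u_i,u_i')$ identically onto the left side of (c) for $\{w_0^{\circ},w_1^{\circ},w_2^{\circ}\}$. As $g$ is invertible, (b) for the sextuple is equivalent to (c) for the frame.

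It remains to match the index sets. Every $C_3$-frame occurs as $\{w_0^{\circ},w_1^{\circ},w_2^{\circ}\}$ of some admissible sextuple: $W(E_8)$ is transitive on $\cC_3$ (Proposition \ref{prop:1B}), $W(E_8)$ fixes $c$ and $d$ (hence $e_9$) and preserves $M$ together with the defining conditions, so it suffices to produce one example --- e.g.\ $u_i=\mu_0+v_i$, $u_i'=\mu_0-v_i$ $(i=0,1,2)$ with $\mu_0=c+e_9-v_3$ and $\{v_0,v_1,v_2\}$ the standard frame --- and transport by $W(E_8)$; this gives (b) $\Leftrightarrow$ (c). On the other hand the equations of (a) are precisely the equations of (b) for the sextuples $(e_i,\,e_0-e_i-e_l,\,e_j,\,e_0-e_j-e_l,\,e_k,\,e_0-e_k-e_l)$, which are admissible with common midpoint $e_0-e_l$; so (b) $\Rightarrow$ (a) is a restriction, and (a) $\Rightarrow$ (c) follows by applying the dictionary to one such sextuple and then spreading the resulting single instance of (c) to all frames by transitivity of $W(E_8)$ on $\cC_3$, using the exact covariances $w.\sigma_{\alpha}=\sigma_{w.\alpha}$ and $w.\tau_{\Lambda}=\tau_{w.\Lambda}$ for $w\in W(E^{(1)}_8)$ (the latter from $W(E_8)$-invariance of $\tau$, the fixing of $e_9$, and $wT_{\beta}w^{-1}=T_{w.\beta}$). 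Together with (c) $\Rightarrow$ (b) this closes the cycle.

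The main obstacle will be the $\sigma$-bookkeeping in the second step: keeping straight which of $T_{\mu-e_9}$, $T_{e_9-\mu}$ acts on which factor and checking that the $\ipr{c}{\cdot}$-linear corrections from $\ipr{\beta}{e_9}\neq0$ annihilate precisely those coming from the $\bZ c$-parts of $w_i\pm w_j$ --- equivalently, that the passage from the affine $\sigma$'s of (b) to the classical $\sigma$'s of (c) is accounted for entirely by $g$. A secondary point requiring care is the identity $L\cap\frh_0=Q(E^{(1)}_8)$ and the verification that the explicit sextuple $(\mu_0\pm v_i)$ indeed lies in $M$ (that is, in $L$, of norm $1$, at $c$-level $-1$).
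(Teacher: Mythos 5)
Your proof is correct and follows essentially the same route as the paper's: you identify $(\mathrm{a})$ as the special case of $(\mathrm{b})$ with midpoint $e_0-e_l$, convert $(\mathrm{b})$ into $(\mathrm{c})$ by conjugating with the Kac translation $T_{\mu-e_9}$ (the paper's $T_{g-e_9}$) after splitting $b_i=a_i+k_ic$ into classical part plus a multiple of $c$, and close the loop via the transitivity of $W(E_8)$ on $\cC_3$ together with $w.\sigma_\alpha=\sigma_{w.\alpha}$ and the $W(E_8)$-invariance of $\tau$. Your only departures are cosmetic but welcome: you spell out the cancellation showing $g.\sigma_{u_i-u_j}=\sigma_{w_i^{\circ}-w_j^{\circ}}$, which the paper leaves implicit, and you realize every $C_3$-frame by an explicit sextuple in $M$ (transported by $W(E_8)$) rather than by the paper's specialization $g=e_9+\hf c$, $b_i=a_i$, which is cleaner since $e_9+\hf c\pm a_i$ need not lie in the lattice $L$.
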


\begin{proof}{Proof}
Note first that equation \eqref{eq:Hirota39sA} is 
a special case of \eqref{eq:Hirota39sB} 
where $u_0=e_i, u_1=e_j, u_2=e_k$ and $v=e_0-e_l$.  
Hence condition $({\rm b})$ implies $({\rm a})$.
We consider equation \eqref{eq:Hirota39sB} for 
a sextuple $u_i,u_i'\in M$ ($i=0,1,2$) as in $({\rm b})$. 
Introducing 
\begin{equation}
\begin{split}
&g=\hf(u_i+u_i'),\quad
b_i=\hf(u_i-u_i')\quad(i=0,1,2),
\\
&
u_i=g+b_i,\quad u_i'=g-b_i\quad(i=0,1,2), 
\end{split}
\end{equation}
we rewrite 
equation \eqref{eq:Hirota39sB} into the 
equation 
\begin{equation}\label{eq:Hirota39sC}
\sigma_{b_1\pm b_2}
\tau_{g+b_0}\tau_{g-b_0}
+
\sigma_{b_2\pm b_0}
\tau_{g+b_1}\tau_{g-b_1}
+
\sigma_{b_0\pm b_1}
\tau_{g+b_2}\tau_{g-b_2}=0
\end{equation}
for a sextuple 
of points $g\pm b_i\in M$ $(i=0,1,2)$ such that
\begin{equation}
\begin{split}
\ipr{b_i}{b_j}=\delta_{i,j},\quad {}\pm b_i\pm b_j\in Q(E^{(1)}_8)
\quad(i,j=0,1,2). 
\end{split}
\end{equation}
In this setting, 
$b_0, b_1, b_2$ and $g$ are characterized by the conditions
\begin{equation}\label{eq:Condgb012}
\begin{split}
&g\in \hf L,\quad
\ipr{g}{g}=0,\ \ipr{c}{g}=-1
\\
&
b_i\in \hf L,\ \ 
\ipr{c}{b_i}=0,\ \ipr{g}{b_i}=0,\ 
\ipr{b_i}{b_j}=\delta_{ij},\ {}\pm b_i\pm b_j\in Q(E^{(1)}_8).
\end{split}
\end{equation}
Since $b_i\in\hf Q(E^{(1)}_8)$ ($i=0,1,2$), 
they are expressed as
\begin{equation}\label{eq:gatob}
b_i=a_i+k_ic,\quad
k_i=\ipr{g}{a_i}\in\hf\bZ\quad
(i=0,1,2), 
\end{equation}
where 
\begin{equation}\label{eq:Conda012}
a_i\in \hf Q(E_8),\ \ 
\ipr{g}{a_i}\in \hf \bZ,\ 
\ipr{a_i}{a_j}=\delta_{ij},\ {}\pm a_i\pm a_j\in Q(E_8). 
\end{equation}
Note that $\ipr{c}{g-e_9}=0$ and hence 
$g-e_9\in\hf Q(E_8^{(1)})$.  
In this situation we have
\begin{equation}
T_{g-e_9}^{-1}.a_i=a_i+\ipr{g-e_9}{a_i}c=a_i+k_ic=b_i\quad(i=0,1,2), 
\end{equation}
and hence we see that equations \eqref{eq:Hirota39sC} and 
\eqref{eq:Hirota39sD} are transformed into each other by 
the actions of $T_{g-e_9}$ and $T_{g-e_9}^{-1}$.
These arguments show that condition $({\rm c})$ implies 
$({\rm b})$. 
Note also that equation \eqref{eq:Hirota39sD} is a 
special case of \eqref{eq:Hirota39sC} where $g=e_9+\hf c$ and 
$b_i=a_i$ ($i=0,1,2$). 
We finally show that $({\rm a})$ implies $({\rm c})$.  
Suppose that equation \eqref{eq:Hirota39sA} holds for some 
quadruple of distinct $i,j,k,l\in\br{1,\ldots,9}$. 
Since it is a special case of \eqref{eq:Hirota39sB}, we 
see that equation \eqref{eq:Hirota39sD} holds for some 
triple $a_0,a_1,a_2\in \hf Q(E_8)$ satisfying \eqref{eq:Hirota39sDCond}, 
namely for some $C_3$-frame $\br{\pm a_0,\pm a_1,\pm a_2}$ 
in the terminology of Section \ref{sec:1}.  
Since the Weyl group $W(E_8)$ 
acts transitively on the set of all $C_3$-frames, 
we see that equation \eqref{eq:Hirota39sD} holds for all $C_3$-frames, 
which implies $({\rm c})$.  
\end{proof}

By abuse of terminology, we say that a function $\tau$ 
is an ORG $\tau$-function if it satisfies the non-autonomous 
Hirota equations \eqref{eq:Hirota39sD} for all $C_3$-frames 
relative to $Q(E_8)$.  
Proposition \ref{prop:9A} means that a family of functions 
$\tau_{\Lambda}$ ($\Lambda\in M$) on $X$ 
is a system of lattice $\tau$-functions if and only if 
$\tau=\tau_{e_9}$ is a $W(E_8)$-invariant ORG $\tau$-function 
on $X$.  
General ORG $\tau$-functions which are not necessarily 
$W(E_8)$-invariant can be interpreted as the system of lattice 
$\tau$-functions over a covering space of $X$. 

Setting 
\begin{equation}
\widetilde{X}=W(E_8)\times X
=\brm{(w,x)}{w\in W(E_8),\ x\in X},
\end{equation}
we define an action of $T(\hf Q(E_8))\rtimes W(E_8)$ 
on $\widetilde{X}$ 
by 
\begin{equation}
T_{\alpha}w.(w', x)=(ww', T_{\alpha}w.x)
\qquad (\alpha\in \hf Q(E_8),\ w\in W(E_8))
\end{equation}
so that the projection $\widetilde{X}\to X$ is equivariant. 
Let $U$ be a subset of $X$ and suppose that 
$U$ is stable by the group $T(Q(E_8))$ of translations.  
Then the subset $\widetilde{U}\subseteq \widetilde{X}$ 
defined as 
\begin{equation}
\widetilde{U}
=\brm{(w,x)\in \widetilde{X}}{w\in W(E_8),\ x\in w.U}
=\bigsqcup_{w\in W(E_8)} \br{w}\times w.U
\end{equation}
is stable by the action of 
$W(E^{(1)}_8)=T(Q(E_8))\rtimes W(E_8)$.  
To a function $\tau$ defined on $U$, we associate a 
function $\widetilde{\tau}$ on $\widetilde{U}$ by setting 
\begin{equation}
\widetilde{\tau}(w,x)=(w.\tau)(x)=\tau(w^{-1}.x)
\quad (w\in W(E_8),\ x\in w.U). 
\end{equation}
The function $\widetilde{\tau}$ on $\widetilde{U}$
is $W(E_8)$-invariant 
and $\tau$ on $U$ is recovered from $\widetilde{\tau}$ 
as 
$\widetilde{\tau}(1,x)=\tau(x)$ $(x\in U)$. 
Also, any $W(E_8)$-invariant function on $\widetilde{U}$ 
is obtained in this way from a function on $U$. 
Note also that the function $\tau$ on $U$ 
satisfies equations 
\begin{equation}\label{eq:HirotaU}
\sigma_{a_1\pm a_2}
T_{a_0}.\tau\ T_{a_0}^{-1}.\tau
+
\sigma_{a_2\pm a_0}
T_{a_1}.\tau\ T_{a_1}^{-1}.\tau
+
\sigma_{a_0\pm a_1}
T_{a_2}.\tau\ T_{a_2}^{-1}.\tau=0
\end{equation}
for all $C_3$-frames $\br{\pm a_0,\pm a_1,\pm a_2}$ 
relative to $Q(E_8)$ 
if and only if 
the $W(E_8)$-invariant 
function $\widetilde{\tau}$ on $\widetilde{U}$
satisfies 
\begin{equation}
\sigma_{a_1\pm a_2}
T_{a_0}.\widetilde{\tau}\ T_{a_0}^{-1}.\widetilde{\tau}
+
\sigma_{a_2\pm a_0}
T_{a_1}.\widetilde{\tau}\ T_{a_1}^{-1}.\widetilde{\tau}
+
\sigma_{a_0\pm a_1}
T_{a_2}.\widetilde{\tau}\ T_{a_2}^{-1}.\widetilde{\tau}=0
\end{equation}
for all $C_3$-frames $\br{\pm a_0,\pm a_1,\pm a_2}$ 
relative to $Q(E_8)$. 
Applying Proposition \ref{prop:9A} to the mapping
\begin{equation}
\widetilde{\gamma}:\ \widetilde{U}\to\overline{\frh}:\quad 
\widetilde{\gamma}(w,x)=\gamma(x)
\quad(w\in W(E_8), x\in w.U),
\end{equation}
and 
$\widetilde{\tau}$ 
on $\widetilde{U}$, we obtain the following 
characterization of an ORG $\tau$-function on $U$.  
\begin{prop}\label{prop:9B}
For a function $\tau$ on a subset $U\subseteq X$, 
consider the function $\widetilde{\tau}$ defined on $\widetilde{U}$.  
Then $\tau$ is an ORG $\tau$-function on $U$ 
if and only if the functions 
$\widetilde{\tau}_{\Lambda}=T_{\Lambda-e_9}^{-1}.\widetilde{\tau}$ $(\Lambda\in M)$ 
form a system 
lattice $\tau$-functions on $\widetilde{U}$. 
\end{prop}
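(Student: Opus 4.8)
The plan is to deduce the statement from Proposition \ref{prop:9A}, applied on the covering $\widetilde{U}$ rather than on $X$, using the facts already assembled just above the proposition. First I would recall the three ingredients in place: the associated function $\widetilde{\tau}$ on $\widetilde{U}$ is $W(E_8)$-invariant; the subset $\widetilde{U}\subseteq\widetilde{X}$ inherits the action of $T(\hf Q(E_8))\rtimes W(E_8)$ together with the equivariant map $\widetilde{\gamma}:\widetilde{U}\to\overline{\frh}$, $\widetilde{\gamma}(w,x)=\gamma(x)$; and $\tau$ satisfies the non-autonomous Hirota equations \eqref{eq:HirotaU} on $U$ for all $C_3$-frames relative to $Q(E_8)$ precisely when $\widetilde{\tau}$ satisfies the corresponding equations on $\widetilde{U}$. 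Thus $\tau$ is an ORG $\tau$-function on $U$ if and only if $\widetilde{\tau}$ is one on $\widetilde{U}$, that is, if and only if $\widetilde{\tau}$ satisfies condition $({\rm c})$ of Proposition \ref{prop:9A} with $(X,\gamma,\tau)$ there replaced by $(\widetilde{U},\widetilde{\gamma},\widetilde{\tau})$.

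Next I would invoke the equivalence of conditions $({\rm a})$ and $({\rm c})$ of Proposition \ref{prop:9A}: writing $\widetilde{\tau}_\Lambda=T_{\Lambda-e_9}^{-1}.\widetilde{\tau}$ for $\Lambda\in M$, condition $({\rm c})$ for $\widetilde{\tau}$ is equivalent to the validity of the quadratic relations \eqref{eq:Hirota39} for the family $\{\widetilde{\tau}_\Lambda\}_{\Lambda\in M}$. The remaining point is the equivariance requirement \eqref{eq:wtauLambda}, namely $w.\widetilde{\tau}_\Lambda=\widetilde{\tau}_{w.\Lambda}$ for all $w\in W(E^{(1)}_8)$ and $\Lambda\in M$, which I claim holds automatically once $\widetilde{\tau}$ is $W(E_8)$-invariant. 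To see this I would use the semidirect decomposition $W(E^{(1)}_8)=T(Q(E_8))\rtimes W(E_8)$: write $w=T_\beta w_0$ with $\beta\in Q(E_8)$, $w_0\in W(E_8)$, and $\Lambda=T_\alpha^{-1}.e_9$ with $\alpha\in Q(E_8)$ the classical part of $\Lambda-e_9$, so that $T_{\Lambda-e_9}=T_\alpha$ because $T_{kc}={\rm id}$ and $\ipr{\alpha}{e_9}=0$. Then $w.\Lambda=T_\beta w_0T_\alpha^{-1}.e_9=T_{\beta-w_0.\alpha}.e_9$ by $w_0T_\alpha w_0^{-1}=T_{w_0.\alpha}$ and $w_0.e_9=e_9$, while likewise $w.\widetilde{\tau}_\Lambda=T_\beta w_0T_\alpha^{-1}.\widetilde{\tau}=T_{\beta-w_0.\alpha}.\widetilde{\tau}$ by the $W(E_8)$-invariance of $\widetilde{\tau}$; the two right-hand sides are $\widetilde{\tau}_{w.\Lambda}$. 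Hence \eqref{eq:wtauLambda} holds, so $\widetilde{\tau}$ being an ORG $\tau$-function on $\widetilde{U}$ is equivalent to $\{\widetilde{\tau}_\Lambda\}_{\Lambda\in M}$ being a system of lattice $\tau$-functions on $\widetilde{U}$, and combining with the first paragraph proves the proposition.

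The only step demanding care, and hence the expected obstacle, is checking that Proposition \ref{prop:9A} genuinely transfers to $\widetilde{U}$: it is stated for a set carrying the action of $T(\hf Q(E_8))\rtimes W(E_8)$, whereas $\widetilde{U}$ was only shown to be stable under $W(E^{(1)}_8)=T(Q(E_8))\rtimes W(E_8)$. The resolution I would give is that the equations \eqref{eq:HirotaU}, \eqref{eq:Hirota39sD} and the lattice relations \eqref{eq:Hirota39} involve, at each point, only the six vertices of the octahedron attached to a $C_3$-frame, and any two of these differ by a vector of $Q(E_8)$ (the edges and diagonals of the octahedron lie in $Q(E_8)$); so every term that appears is defined on $\widetilde{U}$, the proof of Proposition \ref{prop:9A} goes through verbatim, and no further computation is needed.
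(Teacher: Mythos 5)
Your proposal is correct and follows essentially the same route as the paper, which likewise obtains Proposition \ref{prop:9B} by applying Proposition \ref{prop:9A} to the equivariant map $\widetilde{\gamma}$ and the $W(E_8)$-invariant lift $\widetilde{\tau}$ on $\widetilde{U}$, together with the already-established equivalence between the Hirota equations for $\tau$ on $U$ and for $\widetilde{\tau}$ on $\widetilde{U}$. Your explicit verification of the equivariance \eqref{eq:wtauLambda} via the decomposition $W(E^{(1)}_8)=T(Q(E_8))\rtimes W(E_8)$, and your remark that the six vertices of each $C_3$-octahedron differ by elements of $Q(E_8)\delta$ so that all terms are defined, are details the paper leaves implicit but are consistent with its setup.
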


We apply this proposition for constructing lattice $\tau$-functions from ORG $\tau$-functions discussed in this paper.  
Fixing a nonzero constant $\kappa\in\bC^\ast$, 
let $D\subset V$ be a subset such that $D+Q(E_8)\kappa=D$,
and take an 
ORG $\tau$-function $\tau=\tau(x)$ on $D$ 
with $\delta=\kappa$
in the sense of Definition \ref{dfn:ORGtaufn}. 
We then define the action of $T_{v}w\in
T(V)\rtimes W(E_8)$ $(v\in V,\ w\in W(E_8))$
on $V$ by
\begin{equation}
T_{v}w.x=w.x+\kappa v\qquad(x\in V).
\end{equation}
We denote by 
$\pi:\ \frh\to V$ the orthogonal 
projection to 
$\overset{\scirc}{\frh}=V$ 
in \eqref{eq:h0cd}, 
and by $\pi_{\kappa}: \frh_{\kappa}\to V$ its restriction 
to $\frh_{\kappa}$. 
This projection $\pi_{\kappa}: \frh_{\kappa}\to V$ 
is equivariant with respect to the action of 
$T(V)\rtimes W(E_8)$
and 
compatible with the scalar product 
in the sense 
$\ipr{v}{\pi_{\kappa}(h)}=\ipr{v}{h}$, $(v\in V, h\in\frh)$. 
Introducing a subset $U$ of $\frh_{\kappa}$ by 
\begin{equation}
U=\pi_{\kappa}^{-1}(D)
=\brm{h\in\frh_{\kappa}}{\pi_{\kappa}(h)\in D}\subset \frh_{\kappa}, 
\end{equation}
we regard $\tau$ as a function on $U$ through the 
projection $\pi_{\kappa}: U\to D$.  
Note that the isomorphism 
\begin{equation}
\gamma_{\kappa}: V\times \bC\isoto\frh_{\kappa}:\ 
\gamma_{\kappa}(x,\mu)=x-
(\tfrac{1}{2\kappa}\ipr{x}{x}+\mu)c+\kappa d,
\end{equation}
induces the parametrization 
$\gamma_{\kappa}: D\times\bC \isoto U$ 
of $U=\pi_{\kappa}^{-1}(D)$ with an invariant 
parameter $\mu\in\bC$.  
Then obtain a $W(E_8)$-invariant function 
$\widetilde{\tau}$ on 
\begin{equation}\label{eq:Utilde}
\widetilde{U}=\bigsqcup_{w\in W(E_8)} \br{w}\times w.U
\subset W(E_8)\times \frh_{\kappa}
\end{equation}
by setting 
\begin{equation}
\widetilde{\tau}(w,h)=\tau(\pi_{\kappa}(w^{-1}.h))
=\tau(w^{-1}.x)
\quad(w\in W(E_8),\ h\in w.U),
\end{equation}
where $x=\pi_{\kappa}(h)\in w.D$. 
By Proposition \ref{prop:9B}, 
the family of functions 
$\widetilde{\tau}_{\Lambda}
=T_{\Lambda-e_9}^{-1}.\widetilde{\tau}$ 
gives a system of lattice $\tau$-functions 
on $\widetilde{U}$.  If we take 
the classical part 
$\alpha=\pi_{0}(\Lambda-e_9)\in Q(E_8)$ of 
$\Lambda-e_9$, 
then we have $\widetilde{\tau}_{\Lambda}=T_{\alpha}^{-1}.\widetilde{\tau}$, and hence
\begin{equation}
\begin{split}
\widetilde{\tau}_{\Lambda}(w,h)
=\widetilde{\tau}(w,T_{\alpha}.h)
=\tau(w^{-1}.\pi_{\kappa}(T_{\alpha}.h))
=\tau(w^{-1}.(x+\kappa \alpha)),
\end{split}
\end{equation}
for any $w\in W(E_8)$ and $h\in w.U$, where 
$x=\pi_{\kappa}(h)\in w.D$. 
\begin{thm}\label{thm:9C}
Let $\kappa\in\bC^\ast$ be a generic constant. 
Suppose that a subset $D\subseteq V$ is stable 
by $W(E_8)$ and $D+Q(E_8)\kappa=D$.  
Let $\tau(x)$ be an ORG $\tau$-function on $D$ 
with $\delta=\kappa$.  
For each $\Lambda\in M$, define  a function 
$\widetilde{\tau}_{\Lambda}$ on $\widetilde{U}$ of \eqref{eq:Utilde}
by 
\begin{equation}
\widetilde{\tau}_{\Lambda}(w,h)
=\tau(w^{-1}.(x+\kappa \alpha))
\quad(w\in W(E_8),\ h\in w.U)
\end{equation}
with $\alpha=\pi_0(\Lambda-e_9)\in Q(E_8)$ 
and $x=\pi_{\kappa}(h)\in w.D$.  
Then $\widetilde{\tau}_{\Lambda}$ 
$(\Lambda\in M)$ form 
a system of lattice $\tau$-functions on $\widetilde{U}$.
\end{thm}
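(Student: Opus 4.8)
The plan is to obtain the theorem as a direct application of Proposition \ref{prop:9B}, taking for the abstract $W(E^{(1)}_8)$-set $X$ the hyperplane $\frh_\kappa=\brm{h\in\frh}{\ipr{c}{h}=\kappa}$. First I would equip $\frh_\kappa$ with the restriction of the linear action of $T(\hf Q(E_8))\rtimes W(E_8)\subseteq T(V)\rtimes W(E_8)$ on $\frh$ (this preserves $\frh_\kappa$ since $c$ is fixed), and take for the equivariant map into $\overline{\frh}=\frh/\bC c$ the one induced by the canonical projection $\frh\to\overline{\frh}$ (equivariant because the action fixes $c$ and so descends). Next I would record that $U=\pi_\kappa^{-1}(D)$ is stable under $T(Q(E_8))$: since $\pi_\kappa$ intertwines the actions on $\frh_\kappa$ and $V$, one has $\pi_\kappa(T_\alpha.h)=\pi_\kappa(h)+\kappa\alpha$ for $\alpha\in Q(E_8)$, so $T_\alpha.U=\pi_\kappa^{-1}(D+\kappa\alpha)=U$ by the hypothesis $D+Q(E_8)\kappa=D$.

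The crucial point to check is that $\tau$, viewed as a function on $U$ through $\pi_\kappa:U\to D$, is an ORG $\tau$-function on $U$ in the sense of equation \eqref{eq:HirotaU} if and only if it is an ORG $\tau$-function on $D$ with $\delta=\kappa$ in the sense of Definition \ref{dfn:ORGtaufn}. Fix a $C_3$-frame $\br{\pm a_0,\pm a_1,\pm a_2}$ relative to $Q(E_8)$. Since $a_i\pm a_j\in Q(E_8)\subseteq Q(E^{(1)}_8)$, the function $\sigma_{a_i\pm a_j}=[\ipr{a_i\pm a_j}{\cdot}]$ is defined on $\overline{\frh}$, and its pullback to $U$ sends $h$ to $[\ipr{a_i\pm a_j}{\pi_\kappa(h)}]$ because $\ipr{v}{\pi_\kappa(h)}=\ipr{v}{h}$ for $v\in V$. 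Likewise $T_{a_i}.\tau$ and $T_{a_i}^{-1}.\tau$ on $U$ correspond, under $\pi_\kappa$, to the translates $x\mapsto\tau(x-\delta a_i)$ and $x\mapsto\tau(x+\delta a_i)$ of the original function on $D$, so that $T_{a_i}.\tau\ T_{a_i}^{-1}.\tau$ corresponds to $\tau(x\pm a_i\delta)$. Hence, evaluating equation \eqref{eq:HirotaU} at $h\in U$ with $x=\pi_\kappa(h)\in D$ reproduces exactly the Hirota equation ${\rm H}(a_0,a_1,a_2)$ of \eqref{eq:bilinHirota} at $x$; and since $\pi_\kappa:U\to D$ is surjective, the two systems are equivalent. (Here the genericity of $\kappa$, i.e.\ $\bZ\kappa\cap\Omega=\br{0}$, is exactly what makes $\delta=\kappa$ an admissible scaling constant in Definition \ref{dfn:ORGtaufn}.)

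Granting these verifications, Proposition \ref{prop:9B} applies to the $W(E_8)$-invariant function $\widetilde{\tau}$ on $\widetilde{U}=\bigsqcup_{w\in W(E_8)}\br{w}\times w.U$ attached to $\tau$, and yields that the functions $\widetilde{\tau}_\Lambda=T_{\Lambda-e_9}^{-1}.\widetilde{\tau}$ $(\Lambda\in M)$ form a system of lattice $\tau$-functions on $\widetilde{U}$. It then remains to unwind the displayed formula. Since $\ipr{c}{\Lambda-e_9}=0$ one may write $\Lambda-e_9=\alpha+k\,c$ with $\alpha=\pi_0(\Lambda-e_9)\in Q(E_8)$ and $k\in\bC$, whence $T_{\Lambda-e_9}=T_\alpha T_{kc}=T_\alpha$. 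Using the action $T_\alpha.(w,h)=(w,T_\alpha.h)$ on $\widetilde{X}=W(E_8)\times\frh_\kappa$ together with the defining formula for $\widetilde{\tau}$, we get for $w\in W(E_8)$ and $h\in w.U$ with $x=\pi_\kappa(h)$
\begin{equation*}
\widetilde{\tau}_\Lambda(w,h)=\widetilde{\tau}(w,T_\alpha.h)=\tau\big(\pi_\kappa(w^{-1}.T_\alpha.h)\big)=\tau\big(w^{-1}.(x+\kappa\alpha)\big),
\end{equation*}
which is the asserted expression, so the theorem follows.

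I expect the only genuine work to be the bookkeeping of the second paragraph: matching the ``abstract'' Hirota system \eqref{eq:HirotaU} on $U\subseteq\frh_\kappa$, written in terms of the $\sigma_h$ and the Kac translations $T_{a_i}$, with the ``concrete'' system \eqref{eq:bilinHirota} on $D\subseteq V$, written in terms of $[\ipr{\cdot}{\cdot}]$ and ordinary translations by $\delta a_i$, while keeping the double signs and the identification $\delta=\kappa$ straight. Once this dictionary is fixed, everything else is a formal consequence of Proposition \ref{prop:9B} and the elementary identity $T_{\Lambda-e_9}=T_{\pi_0(\Lambda-e_9)}$.
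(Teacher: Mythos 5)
Your proposal is correct and follows essentially the same route as the paper: the paper likewise obtains Theorem \ref{thm:9C} by pulling $\tau$ back to $U=\pi_\kappa^{-1}(D)$ via the equivariant, scalar-product-compatible projection $\pi_\kappa$, invoking Proposition \ref{prop:9B}, and then computing $\widetilde{\tau}_\Lambda(w,h)=\tau(w^{-1}.(x+\kappa\alpha))$ from $T_{\Lambda-e_9}=T_\alpha$ with $\alpha=\pi_0(\Lambda-e_9)$. Your second paragraph merely spells out the dictionary between \eqref{eq:HirotaU} and \eqref{eq:bilinHirota} that the paper leaves implicit in the statement $\ipr{v}{\pi_\kappa(h)}=\ipr{v}{h}$.
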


For example, 
we consider the hypergeometric 
the ORG $\tau$-function $\tau(x)=\tau_{+-}(x)$ 
of Theorem \ref{thm:8A} defined on 
\begin{equation}
\begin{split}
&D=D_{\phi,-\varpi}=\bigsqcup_{n\in\bZ} H_{\phi,-\varpi+n\kappa},
\\ 
&H_{\phi,-\varpi+n\kappa}=\brm{x\in V}{\ipr{\phi}{x}=-\varpi+n\kappa}
\quad(n\in \bZ), 
\end{split}
\end{equation}
under the identification $\delta=\kappa$.  
The components $\tau^{(n)}=\tau|_{H_{\phi,-\varpi+n\kappa}}$ 
($n\in\bZ$)
are then given by $\tau^{(n)}(x)=0$ ($n<0$) and 
\begin{equation}
\tau^{(n)}(x)=\Psi_n(p^{\shf}q^{\shf(1-n)}u;p,q)=
\Psi_n(q^{\shf}u^{-1};p,q)
\quad(n=0,1,2,\ldots)
\end{equation}
in terms of the elliptic hypergeometric integral
\eqref{eq:Psin}.  Here, 
$p=e(\varpi)$, $q=e(\kappa)$ and 
$u_j=e(x_j)$ 
denote the multiplicative variables corresponding to $x_j$
$(j=0,1,\ldots,7)$. 
In this case, the subset 
$U=\pi_{\kappa}^{-1}(D)\subset \frh_{\kappa}$ is specified 
as  $U=\bigsqcup_{n\in\bZ} U_{n}$, where 
\begin{equation}
U_{n}=\brm{h\in \frh_{\kappa}}{\ipr{\phi}{h}
=-\varpi+n\kappa}
=
\brm{h\in \frh_{\kappa}}{\ipr{\alpha_8}{h}=\varpi+(1-n)\kappa}
\quad(n\in\bZ). 
\end{equation}
We regard the ORG $\tau$-function 
$\tau(x)$ as a function on $U$ through the coordinates
\begin{equation}
x_j=\vep_j-\hf(\vep_0-\vep_9)+\hf\kappa
\quad(j=1,\ldots,8),
\quad x_0=-x_8. 
\end{equation}
The corresponding 
lattice $\tau$-functions $\widetilde{\tau}_{\Lambda}$
$(\Lambda\in M)$ on 
\begin{equation}
\widetilde{U}=\bigsqcup_{w\in W(E_8)}\br{w}\times w.U
\end{equation}
are defined as 
\begin{equation}
\widetilde{\tau}_{\Lambda}(w,h)=\tau(w^{-1}.(x+\kappa \alpha))
\quad(w\in W(E_8),\ h\in w.U), 
\end{equation}
where 
$\alpha=\pi_0(\Lambda-e_9)\in Q(E_8)$ and $x=\pi_{\kappa}(h)$. 
Since 
\begin{equation}
e_j-e_9=v_0+v_j-\phi+c
\quad(j=1,\ldots,8), 
\end{equation}
the nine fundamental 
$\tau$-functions $\widetilde{\tau}_{e_j}$ 
($j=1,\ldots,9$)
are specified as
\begin{equation}
\begin{split}
&\widetilde{\tau}_{e_j}(w,h)
=\tau(w^{-1}.(x+\kappa(v_0+v_j-\phi))\quad (j=1,\ldots,7),
\\
&\widetilde{\tau}_{e_8}(w,h)=\tau(w^{-1}.(x-\kappa\phi)),
\qquad 
\widetilde{\tau}_{e_9}(w,h)=\tau(w^{-1}.x).
\end{split}
\end{equation}
for $w\in W(E_8)$ and $h\in w.U$. 

\subsection{\boldmath 
Remarks on the $\bP^1\times \bP^1$ picture}

The difference Painlev\'e equation of type $E_8$ can also 
be formulated in terms of the configuration of generic eight 
points in $\bP^1\times \bP^1$.   
In this case, following the formulation of \cite{KNY2015}
we use the 
Picard lattice 
\begin{equation}
L=\bZ \bsf{h}_1\oplus
\bZ \bsf{h}_2\oplus
\bZ \bsf{e}_1\oplus
\bZ \bsf{e}_2\oplus
\cdots\oplus
\bZ \bsf{e}_8 
\end{equation}
with the symmetric bilinear form 
$\ipr{\cdot}{\cdot}: L\times L\to \bZ$
defined by 
\begin{equation}
\begin{split}
&\ipr{\bsf{h}_i}{\bsf{h}_i}=0\ \ (i=1,2),\quad
\ipr{\bsf{h}_1}{\bsf{h}_2}=\ipr{\bsf{h}_2}{\bsf{h}_1}=-1,
\\
&
\ipr{\bsf{e}_i}{\bsf{e}_j}=\delta_{ij}\quad(i,j=1,\ldots,8). 
\end{split}
\end{equation}
If we denote by $(f,g)$ the inhomogeneous coordinates 
of $\bP^1\times \bP^1$,  
$\bsf{h}_1$ and $\bsf{h}_2$ represent the classes of lines 
$f=\mbox{\rm const.}$ and $g=\mbox{\rm const.}$ respectively, 
and $\bsf{e}_1,\ldots,\bsf{e}_8$ the classes of 
exceptional divisors corresponding to the generic eight
points.  
This Picard lattice and its symmetric bilinear form 
are identified with those we have used 
in the $\bP^2$ picture through the change of bases
\begin{equation}
\begin{split}
\bsf{h}_1=e_0-e_2,\  \bsf{h}_2=e_0-e_1,\  
\bsf{e}_1=e_0-e_1-e_2,\ 
\bsf{e}_j=e_{j+1}\  (j=2,\ldots,8),
\end{split}
\end{equation}
and 
\begin{equation}
e_0=\bsf{h}_1+\bsf{h}_2-\bsf{e}_1,\ 
e_1=\bsf{h}_1-\bsf{e}_1,\ 
e_2=\bsf{h}_2-\bsf{e}_1,\  
e_j=\bsf{e}_{j-1}\quad(j=3,\ldots,9). 
\end{equation}
The simple roots $\alpha_0,\alpha_1,\ldots,\alpha_8$ are now 
expressed as
\begin{equation}
\alpha_0=\bsf{e}_1-\bsf{e}_2,\ 
\alpha_1=\bsf{h}_1-\bsf{h}_2,\  
\alpha_2=\bsf{h}_2-\bsf{e}_1-\bsf{e}_2,\ 
\alpha_j=\bsf{e}_{j-1}-\bsf{e}_j\  (j=3,\ldots,8).  
\end{equation}
The complex vector space $\frh=\bC\otimes_{\bZ}L$ is 
decomposed as $\frh=\overset{\scirc}{\frh}\oplus\bC c
\oplus \bC d$ where
\begin{equation}
c=2\bsf{h}_1+2\bsf{h}_2-\bsf{e}_1-\cdots-\bsf{e}_8,\quad
d=-\bsf{e}_8-\hf c. 
\end{equation}
In this realization, 
the orthonormal basis $\br{v_0,v_1,\ldots,v_8}$ for 
$V=\overset{\scirc}{\frh}$ is given by 
\begin{equation}
\begin{split}
&v_1=\bsf{h}_1-\bsf{e}_1-
\hf(\bsf{h}_1+\bsf{h}_2-\bsf{e}_1-\bsf{e}_8)+\hf c,\\
&v_2=\bsf{h}_2-\bsf{e}_1
-\hf(\bsf{h}_1+\bsf{h}_2-\bsf{e}_1-\bsf{e}_8)+\hf c,\\
&v_j=\bsf{e}_{j-1}
-\hf(\bsf{h}_1+\bsf{h}_2-\bsf{e}_1-\bsf{e}_8)+\hf c\ \  
(j=3,\ldots,8),\ \ 
v_0=-v_8. 
\end{split}
\end{equation}
Accordingly, the coordinates $x=(x_0,x_1,\ldots,x_7)$ for 
$V$ are given by
\begin{equation}
\begin{split}
&x_1=\eta_1-\ep_1-\hf(\eta_1+\eta_2-\ep_1-\ep_8)+\hf \delta,\\
&x_2=\eta_2-\ep_1-\hf(\eta_1+\eta_2-\ep_1-\ep_8)+\hf \delta,\\
&x_j=\ep_{j-1}-\hf(\eta_1+\eta_2-\ep_1-\ep_8)+\hf \delta\ \  
(j=3,\ldots,8),\ \ 
x_0=-x_8. 
\end{split}
\end{equation}
where 
$\eta_i=\ipr{\bsf{h}_i}{\cdot}$ $(i=1,2)$
and $\ep_j=\ipr{\bsf{e}_j}{\cdot}$ $(j=1,\ldots,8)$.

In this framework, 
a system of lattice $\tau$-functions is defined as 
a family of dependent variables $\tau_{\Lambda}$, 
indexed by the same orbit 
\begin{equation}
M=W(E^{(1)}_8)\br{\bsf{e}_1,\ldots,\bsf{e}_8}
=W(E^{(1)}_8)\bsf{e}_8\subset L,
\end{equation}
which admit an action of $W(E^{(1)}_8)$ such that 
\begin{equation}
w.\tau_{\Lambda}=\tau_{w.\Lambda}\quad(w\in W(E^{(1)}_8),\ 
\Lambda\in M)
\end{equation}
and satisfy the quadratic relations 
\begin{equation}
\sigma_{\bssf{e}_{jk}}
\sigma_{\bssf{e}_{r;jk}}
\tau_{\bssf{e}_i}\tau_{\bssf{h}_r-\bssf{e}_i}
+
\sigma_{\bssf{e}_{ki}}
\sigma_{\bssf{e}_{r;ki}}
\tau_{\bssf{e}_j}\tau_{\bssf{h}_r-\bssf{e}_j}
+
\sigma_{\bssf{e}_{ij}}
\sigma_{\bssf{e}_{r;ij}}
\tau_{\bssf{e}_k}\tau_{\bssf{h}_r-\bssf{e}_k}=0
\end{equation}
for $r=1,2$ and for mutually distinct $i,j,k\in\br{1,\ldots,8}$, 
where $\bsf{e}_{ij}=\bsf{e}_i-\bsf{e}_j$ and $\bsf{e}_{r;ij}=\bsf{h}_r-\bsf{e}_i-\bsf{e}_j$. 
Through the expression 
$\tau_{\Lambda}=T_{\Lambda-\bssf{e}_8}^{-1}\tau_{\bssf{e}_8}$, 
this notion of lattice $\tau$-functions is interpreted 
by that of ORG $\tau$-functions 
in the same way as in Proposition \ref{prop:9B} and Theorem \ref{thm:9C}. 

\appendix

\section*{\LARGE Appendix}
\section{Proof of Theorem \ref{thm:3C}}
\label{sec:A}
In this Appendix, we give a proof of Theorem \ref{thm:3C}.  
The following proof is essentially the same as the 
argument in Masuda \cite[Section 3]{Masuda2011}.
We first prove Theorem \ref{thm:3C} under an 
additional assumption that 
$\tau^{(n-1)}(x)$ and $\tau^{(n)}(x)$ are $W(E_7)$-invariant.  
After that we explain how the general case can be reduced 
to the invariant case.  
\par\medskip

\subsection{Preliminary remark}
\par\medskip
We begin by a general remark on the 
Hirota equations associated with $C_3$-frames. 
Fixing a $C_l$-frame 
$A=\br{\pm a_1,\ldots,\pm a_l}$ ($l=3,\ldots,8$), 
we suppose that a function $\tau(x)$ satisfies the 
Hirota equation
\begin{equation}\label{eq:A0Hirota}
\tau(x\pm a_i\delta)[\ipr{a_j\pm a_k}{x}]+
\tau(x\pm a_j\delta)[\ipr{a_k\pm a_i}{x}]+
\tau(x\pm a_k\delta)[\ipr{a_i\pm a_j}{x}]=0
\end{equation}
for any triple $i,j,k\in\br{1,\ldots,l}$.  
We also assume that 
$[\ipr{a_i\pm a_j}{x}]\ne 0$ on the 
domain of definition of $\tau$
for any 
distinct pair $i,j\in\br{1,\ldots,l}$.  
Then \eqref{eq:A0Hirota} can be written as
\begin{equation}
\tau(x\pm a_k\delta)=
\frac{
\tau(x\pm a_i\delta)[\ipr{a_k\pm a_j}{x}]
-\tau(x\pm a_j\delta)[\ipr{a_k\pm a_i}{x}]
}
{[\ipr{a_i\pm a_j}{x}]}
\end{equation}
for any $k\in\br{1,\ldots,l}$.  
In view of this expression, for each $u\in V$ 
we define
\begin{equation}
f_{ij}(x;u)=\frac{\tau(x\pm a_i)[\ipr{u\pm a_j}{x}]
-\tau(x\pm a_j)[\ipr{u\pm a_i}{x}]}
{[\ipr{a_i\pm a_j}{x}]}
\end{equation}
for each distinct pair $i,j\in\br{1,\ldots,l}$, so that 
$f_{ij}(x;a_k)=\tau(x\pm a_k\delta)$ $(k\in\br{1,\ldots,l})$.  
A simple but important observation is that 
the three-term relation \eqref{eq:three-term}  
of the function 
$[z]$ implies the 
functional equation 
\begin{equation}
f_{ij}(x;u_0)[\ipr{u_1\pm u_2}{x}]+
f_{ij}(x;u_1)[\ipr{u_2\pm u_0}{x}]+
f_{ij}(x;u_2)[\ipr{u_0\pm u_1}{x}]=0
\end{equation}
for any $u_0,u_1,u_2\in V$.  
From this, 
for any distinct pair $r,s\in\br{1,\ldots,l}$ we obtain 
\begin{equation}
\begin{split}
f_{ij}(x;u)[\ipr{a_r\pm a_s}{x}]
&=
f_{ij}(x;a_r)[\ipr{u\pm a_s}{x}]-
f_{ij}(x;a_s)[\ipr{u\pm a_r}{x}]
\\
&=
\tau(x\pm a_r\delta)[\ipr{u\pm a_s}{x}]-
\tau(x\pm a_s\delta)[\ipr{u\pm a_r}{x}]
\\
&=f_{rs}(x;u)[\ipr{a_r\pm a_s}{x}], 
\end{split}
\end{equation}
and hence $f_{ij}(x;u)=f_{rs}(x;u)$.  
Summarizing these arguments, we have
\begin{lem}\label{lem:A1}
Let $A=\br{\pm a_1,\ldots,\pm a_l}$ be 
a $C_l$-frame and suppose that $\tau(x)$ satisfies the 
Hirota equation \eqref{eq:A0Hirota} for any triple 
$i,j,k\in\br{1,\ldots,l}$.  Then 
the function 
\begin{equation}
f(x;u)=\frac{\tau(x\pm a_i)[\ipr{u\pm a_j}{x}]
-\tau(x\pm a_j)[\ipr{u\pm a_i}{x}]}
{[\ipr{a_i\pm a_j}{x}]}
\quad(x,u\in V)
\end{equation}
does not depend on the choice of 
distinct $i,j\in\br{1,\ldots,l}$.  
Furthermore, it satisfies
\begin{equation}
f(x;u_0)[\ipr{u_1\pm u_2}{x}]+
f(x;u_1)[\ipr{u_2\pm u_0}{x}]+
f(x;u_2)[\ipr{u_0\pm u_1}{x}]=0
\end{equation}
for any $u_0,u_1,u_2\in V$, and 
\begin{equation}
f(x;a_k)=\tau(x\pm a_k\delta)\quad(k=1,\ldots,l). 
\end{equation}
\end{lem}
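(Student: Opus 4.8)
The plan is to organize the computations gathered in the Preliminary remark into a short sequence of verifications. Fix a distinct pair $i,j\in\br{1,\ldots,l}$ and set
\[
f_{ij}(x;u)=\frac{\tau(x\pm a_i\delta)[\ipr{u\pm a_j}{x}]-\tau(x\pm a_j\delta)[\ipr{u\pm a_i}{x}]}{[\ipr{a_i\pm a_j}{x}]},
\]
which is well defined since $[\ipr{a_i\pm a_j}{x}]\ne 0$ on the domain of $\tau$ by assumption. The first step is the direct check that $f_{ij}(x;a_i)=\tau(x\pm a_i\delta)$ and $f_{ij}(x;a_j)=\tau(x\pm a_j\delta)$: only $[0]=0$, which annihilates $[\ipr{a_i\pm a_i}{x}]$ and $[\ipr{a_j\pm a_j}{x}]$, and the oddness of $[z]$, which gives $[\ipr{a_j\pm a_i}{x}]=-[\ipr{a_i\pm a_j}{x}]$, are needed.

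Next I would observe that, viewed as a function of $u$, $f_{ij}(x;u)$ is a linear combination of $u\mapsto[\ipr{u\pm a_i}{x}]$ and $u\mapsto[\ipr{u\pm a_j}{x}]$ with $u$-independent coefficients. For a single such theta factor, writing $z_r=\ipr{u_r}{x}$ and $b=\ipr{a}{x}$, substituting $(z,\alpha,\beta,\gamma)=(b,z_0,z_1,z_2)$ into the three-term relation \eqref{eq:three-term} and then using $[\ipr{a\pm u_r}{x}]=-[\ipr{u_r\pm a}{x}]$ yields exactly
\[
[\ipr{u_0\pm a}{x}][\ipr{u_1\pm u_2}{x}]+[\ipr{u_1\pm a}{x}][\ipr{u_2\pm u_0}{x}]+[\ipr{u_2\pm a}{x}][\ipr{u_0\pm u_1}{x}]=0.
\]
Applying this with $a=a_i$ and $a=a_j$ and invoking linearity in $u$ shows that $f_{ij}$ itself satisfies the asserted cyclic functional equation for all $u_0,u_1,u_2\in V$.

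It remains to remove the dependence on the pair. Substituting $u_0=u$, $u_1=a_r$, $u_2=a_s$ (with $r\ne s$) into that functional equation and normalizing signs gives $f_{ij}(x;u)[\ipr{a_r\pm a_s}{x}]=f_{ij}(x;a_r)[\ipr{u\pm a_s}{x}]-f_{ij}(x;a_s)[\ipr{u\pm a_r}{x}]$. On the other hand, for $k\notin\br{i,j}$ the Hirota equation ${\rm H}(a_i,a_j,a_k)$ — which is among the hypotheses because $\br{\pm a_i,\pm a_j,\pm a_k}$ is a $C_3$-frame contained in $A$ — rearranges, by the same sign bookkeeping, into $f_{ij}(x;a_k)=\tau(x\pm a_k\delta)$; together with the first step this gives $f_{ij}(x;a_k)=\tau(x\pm a_k\delta)$ for every $k$. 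Feeding this back into the previous identity, the right-hand side becomes $\tau(x\pm a_r\delta)[\ipr{u\pm a_s}{x}]-\tau(x\pm a_s\delta)[\ipr{u\pm a_r}{x}]=f_{rs}(x;u)[\ipr{a_r\pm a_s}{x}]$, and dividing by $[\ipr{a_r\pm a_s}{x}]\ne 0$ gives $f_{ij}=f_{rs}$. Hence $f(x;u):=f_{ij}(x;u)$ is independent of the chosen pair, and the remaining two assertions have already been established along the way.

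No step presents a genuine difficulty; the only thing to watch carefully is the bookkeeping of the signs produced by the oddness of $[z]$ whenever the two arguments of a double-sign bracket $[\ipr{v\pm w}{x}]$ are transposed, since these propagate through every rearrangement of the Hirota equation and of the three-term relation. Everything else is a direct substitution into \eqref{eq:three-term}. In fact this is precisely the content of the Preliminary remark, so the lemma may equally be regarded as already proved there, a one-line cross-reference being enough.
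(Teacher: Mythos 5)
Your proof is correct and follows essentially the same route as the paper's own argument in the Preliminary remark of Appendix A: define $f_{ij}$, verify $f_{ij}(x;a_k)=\tau(x\pm a_k\delta)$ via the Hirota equations, derive the cyclic functional equation in $u$ from the three-term relation \eqref{eq:three-term}, and specialize $u_1=a_r$, $u_2=a_s$ to conclude $f_{ij}=f_{rs}$. Your explicit remark that $f_{ij}(x;u)$ is a $u$-independent linear combination of $[\ipr{u\pm a_i}{x}]$ and $[\ipr{u\pm a_j}{x}]$ merely spells out the step the paper calls a ``simple but important observation,'' so no genuine difference remains.
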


Returning to the setting of 
Theorem \ref{thm:3C}, 
we suppose furthermore that 
$\tau^{(n-1)}(x)$ on $H_{c+(n-1)\delta}$ and 
$\tau^{(n)}(x)$ on $H_{c+n\delta}$ are 
$W(E_7)$-invariant. 
For a function $\varphi=\varphi(x)$, the action $w.\varphi$ 
of $w\in W(E_7)$ 
is defined by $(w.\varphi)(x)=\varphi(w^{-1}.x)$. 
We say that $\varphi$ is invariant with respect to 
$w$ if $w.\varphi=\varphi$, namely, 
$\varphi(w^{-1}.x)=\varphi(x)$.  Note also that, 
for the function $\psi$ defined by 
$\psi(x)=\varphi(x+v)$ $(v\in V)$, we have 
$(w.\psi)(x)=(w.\varphi)(x+w.v)$, 
and hence $(w.\psi)(x)=\varphi(x+w.v)$ if $\varphi$ 
is $w$-invariant.

\subsection{\boldmath Definition of $\tau^{(n+1)}$: 
$W(E_7)$-invariance and $(\II_1)_n$}  
We first show that there exists a unique 
$W(E_7)$-invariant 
(meromorphic) function $\tau^{(n+1)}(x)$ on 
$H_{c+(n+1)\delta}$ that satisfies the bilinear 
equations of type $(\II_1)_n$. 

We consider the $C_8$-frame $A=\br{\pm a_0,\pm a_1,\ldots, \pm a_7}$ defined by 
\begin{equation}\label{eq:C801}
\begin{split}
&a_0=\hf(v_0-v_1+\phi),\quad a_1=\hf(v_1-v_0+\phi),
\\
&a_j=v_j+\hf(v_0+v_1-\phi)\quad(j=2,\ldots,7). 
\end{split}
\end{equation}
Note that $\ipr{\phi}{a_0}=\ipr{\phi}{a_1}=1$ and
$\ipr{\phi}{a_j}=0$ ($j=2,\ldots,7$).
Then, from the assumption 
that $\tau^{(n)}(x)$ satisfies the bilinear equations of type 
$(\II_0)_n$, by Lemma \ref{lem:A1}
it follows that 
the function 
\begin{equation}
f(x;u)=\frac{\tau^{(n)}(x\pm a_i\delta)[\ipr{u\pm a_j}{x}]
-\tau^{(n)}(x\pm a_j\delta)[\ipr{u\pm a_i}{x}]}
{[a_i\pm a_j]}
\end{equation}
does not depend on the choice of distinct $i,j\in\br{2,\ldots,7}$. 
In view of the bilinear equations of type $(\II_1)_n$, we 
define the function 
$\tau^{(n+1)}(x)$ on $H_{c+(n+1)\delta}$ by the equation 
\begin{equation}
\tau^{(n+1)}(x+a_0\delta)
\tau^{(n-1)}(x-a_0\delta)=f(x;a_0).  
\end{equation}
This implies that $\tau^{(n+1)}(x)$ satisfies 
\begin{equation}\label{eq:Toda0ij}
\begin{split}
&\tau^{(n+1)}(x+a_0\delta)
\tau^{(n-1)}(x-a_0\delta)[\ipr{a_i\pm a_j}{x}]
\\
&=
\tau^{(n)}(x\pm a_i\delta)[\ipr{a_0\pm a_j}{x}]
-\tau^{(n)}(x\pm a_j\delta)[\ipr{a_0\pm a_i}{x}]
\end{split}
\end{equation}
for any distinct $i,j\in \br{2,\ldots,7}$.  
By replacing $x$ with $x-a_0\delta$, 
these equations can be written as 
\begin{equation}
\begin{split}
&\tau^{(n+1)}(x)
\\
&=
\frac{\tau^{(n)}(x\!-\!(a_0\pm a_i)\delta)
[\ipr{a_0\pm a_j}{x}\!-\!\delta]
-\tau^{(n)}(x\!-\!(a_0\pm a_j)\delta)
[\ipr{a_0\pm a_i}{x}\!-\!\delta]}
{\tau^{(n-1)}(x-2a_0\delta)[\ipr{a_i\pm a_j}{x}]}
\end{split}
\end{equation}
for any distinct $i,j\in\br{2,\ldots,7}$. 
In terms of the basis $v_0,v_1,\ldots,v_7$ for $V$, 
we have 
\begin{equation}\label{eq:tau(n+1)(x)}
\begin{split}
\tau^{(n+1)}(x)
&=\frac{1}{
\tau^{(n-1)}(x-(\phi+v_0-v_1)\delta)
[\ipr{v_j-v_i}{x}]
[\ipr{\phi-v_{01ij}}{x}]}
\\
&\quad\cdot
\big(
\tau^{(n)}(x-v_{0i}\delta)
\tau^{(n)}(x-(\phi-v_{1i})\delta)
[\ipr{v_{0j}}{x}-\delta][\ipr{\phi-v_{1j}}{x}-\delta]
\\
&\qquad\mbox{}-
\tau^{(n)}(x-v_{0j}\delta)
\tau^{(n)}(x-(\phi-v_{1j})\delta)
[\ipr{v_{0i}}{x}-\delta][\ipr{\phi-v_{1i}}{x}-\delta]
\big)
\end{split}
\end{equation}
for any distinct $i,j\in\br{2,\ldots,7}$, 
where $v_{ab}=v_a+v_b$ and 
$v_{abcd}=v_{a}+v_{b}+v_{c}+v_{d}$. 

We next show that this function $\tau^{(n+1)}(x)$ 
is invariant under the action of the Weyl group
\begin{equation}
W(E_7)=\la s_0,s_1,\ldots,s_6\ra;
\quad
s_0=r_{\phi-v_{0123}},\ s_{j}=r_{v_j-v_{j+1}}\ \ (j=1,\ldots,6).
\end{equation}
Since 
$\tau^{(n-1)}(x)$ and $\tau^{(n)}(x)$ are $W(E_7)$-invariant, 
from the fact that 
the expression \eqref{eq:tau(n+1)(x)} does not 
depend on the choice of $i,j\in\br{2,\ldots,7}$, 
it follows that 
$\tau^{(n+1)}(x)$ 
is invariant with respect to $s_2,\ldots,s_6$.
To see the invariance with respect to $s_0$, 
we take $(i,j)=(2,3)$: 
\begin{equation}\label{eq:tau(n+1)(x)23}
\begin{split}
\tau^{(n+1)}(x)
&=\frac{1}{
\tau^{(n-1)}(x-(\phi+v_0-v_1)\delta)
[\ipr{v_3-v_2}{x}]
[\ipr{\phi-v_{0123}}{x}]}
\\
&\quad\cdot
\big(
\tau^{(n)}(x-v_{02}\delta)
\tau^{(n)}(x-(\phi\!-\!v_{12})\delta)
[\ipr{v_{03}}{x}-\delta][\ipr{\phi\!-\!v_{13}}{x}-\delta]
\\
&\qquad\mbox{}-
\tau^{(n)}(x-v_{03}\delta)
\tau^{(n)}(x-(\phi\!-\!v_{13})\delta)
[\ipr{v_{02}}{x}-\delta][\ipr{\phi\!-\!v_{12}}{x}-\delta]
\big).
\end{split}
\end{equation}
Since $s_0(v_{ij})=\phi-v_{kl}$ for $\br{i,j,k,l}=\br{0,1,2,3}$,
this expression is manifestly invariant with respect to $s_0$. 
It remains to show that $\tau^{(n+1)}(x)$ is invariant 
with respect to $s_1$.  
The $s_1$-invariance of $\tau^{(n+1)}(x)$ is equivalent to 
the equality of 
\begin{equation}
\begin{split}
L&=
\tau^{(n-1)}(x-(\phi+v_0-v_2)\delta)
[\ipr{v_3-v_1}{x}]
\\
&\quad\cdot
\big(
\tau^{(n)}(x-v_{02}\delta)
\tau^{(n)}(x-(\phi-v_{12})\delta)
[\ipr{v_{03}}{x}-\delta][\ipr{\phi-v_{13}}{x}-\delta]
\\
&\qquad\mbox{}-
\tau^{(n)}(x-v_{03}\delta)
\tau^{(n)}(x-(\phi-v_{13})\delta)
[\ipr{v_{02}}{x}-\delta][\ipr{\phi-v_{12}}{x}-\delta]
\big)
\end{split}
\end{equation}
and
\begin{equation}
\begin{split}
R&=
\tau^{(n-1)}(x-(\phi+v_0-v_1)\delta)
[\ipr{v_3-v_2}{x}]
\\
&\quad\cdot
\big(
\tau^{(n)}(x-v_{01}\delta)
\tau^{(n)}(x-(\phi-v_{12})\delta)
[\ipr{v_{03}}{x}-\delta][\ipr{\phi-v_{23}}{x}-\delta]
\\
&\qquad\mbox{}-
\tau^{(n)}(x-v_{03}\delta)
\tau^{(n)}(x-(\phi-v_{23})\delta)
[\ipr{v_{01}}{x}-\delta][\ipr{\phi-v_{12}}{x}-\delta]
\big).
\end{split}
\end{equation}
Expanding these as $L=L_1-L_2$ and $R=R_1-R_2$, 
we look at 
\begin{equation}
\begin{split}
&L_1-R_1
\\
&=
\tau^{(n)}(x-(\phi-v_{12})\delta)
[\ipr{v_{03}}{x}-\delta]
\\
&\quad\cdot\big(
\tau^{(n-1)}(x-(\phi+v_0-v_2)\delta)
\tau^{(n)}(x-v_{02}\delta)
[\ipr{v_3-v_1}{x}]
[\ipr{\phi-v_{13}}{x}-\delta]
\\
&\qquad\mbox{}-
\tau^{(n-1)}(x-(\phi+v_0-v_1)\delta)
\tau^{(n)}(x-v_{01}\delta)
[\ipr{v_3-v_2}{x}]
[\ipr{\phi-v_{23}}{x}-\delta]
\big).
\end{split}
\end{equation}
Setting $u_i=\hf \phi-v_i$
and $y=x-(\hf\phi+v_0)$, we compute 
the last factor as
\begin{equation}
\begin{split}
&
\tau^{(n-1)}(y-u_2\delta)\tau^{(n)}(y+u_2\delta)
[(u_1\pm u_3|y)]
\\
&\quad\mbox{}
-\tau^{(n-1)}(y-u_1\delta)\tau^{(n)}(y+u_1\delta)
[(u_2\pm u_3|y)]
\\
&=
\tau^{(n-1)}(y-u_3\delta)
\tau^{(n)}(y+u_3\delta)
[(u_1\pm u_2|y)]
\\
&=
\tau^{(n-1)}(x-(\phi+v_0-v_3)\delta)
\tau^{(n)}(x-v_{03}\delta)
[\ipr{v_2-v_1}{x}][\ipr{\phi-v_{12}}{x}-\delta]
\end{split}
\end{equation}
by the bilinear equation of type $(\I)_{n-1/2}$ 
for the $C_3$-frame $\br{\pm u_1,\pm u_2,\pm u_3}$.
Hence we have
\begin{equation}
\begin{split}
L_1-R_1&=
\tau^{(n-1)}(x-(\phi+v_0-v_3)\delta)
\tau^{(n)}(x-v_{03}\delta)
\tau^{(n)}(x-(\phi-v_{12})\delta)
\\
&\quad\cdot
[\ipr{v_{03}}{x}-\delta]
[\ipr{v_2-v_1}{x}][\ipr{\phi-v_{12}}{x}-\delta]. 
\end{split}
\end{equation}
On the other hand, we have 
\begin{equation}
\begin{split}
&L_2-R_2
\\
&=
\tau^{(n)}(x-v_{03}\delta)[\ipr{\phi-v_{12}}{x}-\delta]
\\
&\quad\cdot
\big(
\tau^{(n-1)}(x-(\phi+v_0-v_2)\delta)
\tau^{(n)}(x-(\phi\!-\!v_{13}))
[\ipr{v_3\!-\!v_1}{x}]
[\ipr{v_{02}}{x}-\delta]
\\
&\qquad\mbox{}-
\tau^{(n-1)}(x-(\phi+v_0-v_1)\delta)
\tau^{(n)}(x-(\phi\!-\!v_{23})\delta)
[\ipr{v_3\!-\!v_2}{x}]
[\ipr{v_{01}}{x}-\delta]
\big). 
\end{split}
\end{equation}
Setting $b_i=\hf v_{0123}-v_i$ ($i=0,1,2,3$) 
and $z=x-(\phi-b_0)\delta$, we compute the last factor as
\begin{equation}
\begin{split}
&
\tau^{(n-1)}(z-b_2\delta)\tau^{(n)}(z+b_2\delta)
[\ipr{b_1\pm b_3}{z}]
\\
&\quad\mbox{}-
\tau^{(n-1)}(z-b_1\delta)\tau^{(n)}(z+b_1\delta)
[\ipr{b_2\pm b_3}{z}]
\\
&=
\tau^{(n-1)}(z-b_3\delta)\tau^{(n)}(z+b_3\delta)
[\ipr{b_1\pm b_2}{z}]
\\
&=
\tau^{(n-1)}(x-(\phi+v_0-v_3)\delta)
\tau^{(n)}(x-(\phi-v_{12})\delta)
[\ipr{v_2-v_1}{x}][\ipr{v_{03}}{x}-\delta]
\end{split}
\end{equation}
by the bilinear equation of type $(\I)_{n-1/2}$ 
for the $C_3$-frame $\br{\pm b_1,\pm b_2,\pm b_3}$. 
Hence
\begin{equation}
\begin{split}
L_2-R_2&=
\tau^{(n-1)}(x-(\phi+v_0-v_3)\delta)
\tau^{(n)}(x-v_{03}\delta)
\tau^{(n)}(x-(\phi-v_{12})\delta)
\\
&\quad\cdot
[\ipr{v_{03}}{x}-\delta]
[\ipr{v_2-v_1}{x}]
[\ipr{\phi-v_{12}}{x}-\delta]
\\
&=L_1-R_1, 
\end{split}
\end{equation}
which implies $L=R$ as desired. 

Recall that $W(E_7)$ acts transitively on 
the set of all $C_3$-frames of 
type $\II_1$.  
Since $\tau^{(n+1)}(x)$ is 
$W(E_7)$-invariant and 
satisfies \eqref{eq:Toda0ij}, 
it readily 
satisfies the bilinear equations 
of type $(\II_1)_n$ for all $C_3$-frames of type $\II_1$. 

\subsection{\boldmath $(\II_1)_n\ \Longrightarrow\ (\II_2)_n$}

Since $\tau^{(n+1)}(x)$ is $W(E_7)$-invariant, 
we have only to show that it satisfies the bilinear 
equation of type $(\II_2)_n$ for a particular 
$C_3$-frame of type $\II_2$. 

Taking the $C_8$-frame \eqref{eq:C801} of type $\II$ 
as before, we look at the bilinear relations
\begin{equation}\label{eq:II10}
\begin{split}
&\tau^{(n+1)}(x+a_k\delta)
\tau^{(n-1)}(x-a_k\delta)
\\
&=
\frac{\tau^{(n)}(x\pm a_2\delta)[\ipr{a_k\pm a_3}{x}]
-
\tau^{(n)}(x\pm a_3\delta)[\ipr{a_k\pm a_2}{x}]}
{[\ipr{a_2\pm a_3}{x}]}
\end{split}
\end{equation}
of type $(\II_1)_n$ for 
$k=0,1$. 
From these we have
\begin{equation}
\begin{split}
&\tau^{(n+1)}(x+a_0\delta)
\tau^{(n-1)}(x-a_0\delta)[\ipr{a_2\pm a_1}{x}]
\\
&\quad\mbox{}
-
\tau^{(n+1)}(x+a_1\delta)
\tau^{(n-1)}(x-a_1\delta)[\ipr{a_2\pm a_0}{x}]
\\
&=
\frac{\tau^{(n)}(x\pm a_2\delta)}{[\ipr{a_2\pm a_3}{x}]}
\big(
[\ipr{a_0\pm a_3}{x}]
[\ipr{a_2\pm a_1}{x}]
-
[\ipr{a_1\pm a_3}{x}]
[\ipr{a_2\pm a_0}{x}]
\big)
\\
&
=\tau^{(n)}(x\pm a_2\delta)
[\ipr{a_0\pm a_1}{x}],
\end{split}
\end{equation}
which is the bilinear equation 
of type $(\II_2)_n$ for the $C_3$-frame 
$\br{\pm a_0,\pm a_1,\pm a_2}$ of type $\II_2$.  

\subsection{\boldmath $(\II_1)_n\ \Longrightarrow\ (\I)_{n+1/2}$}

We have only to show that the bilinear equation 
$(\I)_{n+1/2}$ holds for some particular 
$C_3$-frame of type $\I$. 

Taking the $C_8$-frame of \eqref{eq:C801}, 
we look at the bilinear relation
\begin{equation}
\begin{split}
&\tau^{(n+1)}(x)\tau^{(n-1)}(x-(\phi+v_0-v_1)\delta)
[\ipr{v_3-v_2}{x}][\ipr{\phi-v_{0123}}{x}]
\\
&=
\tau^{(n)}(x-(\phi-v_{12})\delta)
\tau^{(n)}(x-v_{02}\delta)
[\ipr{v_{03}}{x}-\delta]
[\ipr{\phi-v_{13}}{x}-\delta]
\\
&\quad\mbox{}-
\tau^{(n)}(x-(\phi-v_{13})\delta)
\tau^{(n)}(x-v_{03})\delta)
[\ipr{v_{02}}{x}-\delta]
[\ipr{\phi-v_{12}}{x}-\delta]
\end{split}
\end{equation}
of type $(\II_1)_n$. 
Replacing $x$ with $x+(v_0-v_1)\delta$,
we rewrite this formula into 
\begin{equation}\label{eq:Toda}
\begin{split}
&\tau^{(n+1)}(x+(v_0-v_1)\delta)\tau^{(n-1)}(x-\phi\delta)
[\ipr{v_3-v_2}{x}][\ipr{\phi-v_{0123}}{x}]
\\
&=
\tau^{(n)}(x-(\phi-v_{02})\delta)
\tau^{(n)}(x-v_{12}\delta)
[\ipr{v_{03}}{x}]
[\ipr{\phi-v_{13}}{x}]
\\
&\quad\mbox{}-
\tau^{(n)}(x-(\phi-v_{03})\delta)
\tau^{(n)}(x-(v_{13})\delta)
[\ipr{v_{02}}{x}]
[\ipr{\phi-v_{12}}{x}]. 
\end{split}
\end{equation}
Multiplying this by 
$\tau^{(n)}(x+(v_{01}-\phi)\delta)[\ipr{\phi-v_{23}}{x}]$, 
we obtain 
\begin{equation}
\begin{split}
&
\tau^{(n+1)}(x+(v_0-v_1)\delta)
\tau^{(n)}(x+(v_{01}-\phi)\delta)
[\ipr{v_3-v_2}{x}][\ipr{\phi-v_{23}}{x}]
\\
&\quad\cdot
\tau^{(n-1)}(x-\phi\delta)
[\ipr{\phi-v_{0123}}{x}]
\\
&=
\tau^{(n)}(x-u_{01}\delta)
\tau^{(n)}(x-u_{02}\delta)
\tau^{(n)}(x-v_{12}\delta)
[\ipr{u_{13}}{x}]
[\ipr{u_{23}}{x}]
[\ipr{v_{03}}{x}]
\\
&\quad\mbox{}-
\tau^{(n)}(x-u_{01}\delta)
\tau^{(n)}(x-u_{03}\delta)
\tau^{(n)}(x-v_{13}\delta)
[\ipr{u_{12}}{x}]
[\ipr{u_{23}}{x}]
[\ipr{v_{02}}{x}]
\end{split}
\end{equation}
where $u_i=\hf\phi-v_i$ and $u_{ij}=u_i+u_j=\phi-v_{ij}$. 
Then, applying the cyclic permutation $(123)$ 
to this formula, we see that the sum of those three 
vanishes term by term.  
It means that 
\begin{equation}
\begin{split}
\tau^{(n+1)}(x+(v_0-v_1)\delta)
\tau^{(n)}(x+(v_{01}-\phi)\delta)
[\ipr{v_3-v_2}{x}][\ipr{\phi-v_{23}}{x}]+\cdots=0, 
\end{split}
\end{equation}
namely
\begin{equation}
\begin{split}
\tau^{(n+1)}(x+(u_1-u_0)\delta)
\tau^{(n)}(x-(u_0+u_1)\delta)
[\ipr{u_2\pm u_3}{x}]+\cdots=0. 
\end{split}
\end{equation}
Replacing $x$ by $x+u_0\delta$, we obtain 
the bilinear equation 
\begin{equation}
\begin{split}
\tau^{(n+1)}(x+u_1\delta)
\tau^{(n)}(x-u_1\delta)
[\ipr{u_2\pm u_3}{x}]+\cdots=0
\end{split}
\end{equation}
of type $(\I)_{n+1/2}$ for the $C_3$-frame 
$\br{\pm u_1,\pm u_2,\pm u_3}$ of type 
$\I$. 

\subsection{\boldmath $(\II_1)_{n}\ \Longrightarrow\ (\II_0)_{n+1}$}

In order to show that $\tau^{(n+1)}(x)$ satisfies 
the bilinear equations of type $(\II)_{n+1}$, we 
take the $C_8$-frame $A=\br{\pm a_0,\pm a_1,\ldots
\pm a_7}$ of type $\II$ defined by 
\begin{equation}
\begin{split}
&a_j=v_j+\hf(v_{67}-\phi)\quad(j=0,1,2,3,4,5);
\\
&a_6=\hf(v_6-v_7+\phi),\ \ a_7=\hf(v_7-v_6+\phi). 
\end{split}
\end{equation}
We prove that $\tau^{(n+1)}(x)$ satisfies the bilinear 
equation 
\begin{equation}
\tau^{(n+1)}(x+(a_0\pm a_3)\delta)[\ipr{a_4\pm a_5}{x}]
+\cdots=0
\end{equation}
of type $(\II_0)_{n+1}$ for the $C_3$-frame 
$\br{\pm a_3,\pm a_4,\pm a_5}$.
In terms of the basis $v_0,v_1,\ldots,v_7$ for $V$, 
this equation is expressed as 
\begin{equation}
\tau^{(n+1)}(x+(v_0-v_3)\delta)
\tau^{(n+1)}(x+(\phi-v_{1245})\delta)
[\ipr{v_4-v_5}{x}]
[\ipr{\phi-v_{0123}}{x}]
+\cdots=0. 
\end{equation}

We now look at the bilinear equation 
\begin{equation}
\begin{split}
&\tau^{(n+1)}(x+(v_0-v_4)\delta)\tau^{(n-1)}(x-\phi\delta)
[\ipr{v_2-v_1}{x}][\ipr{\phi-v_{0124}}{x}]
\\
&=
\tau^{(n)}(x-(\phi-v_{01})\delta)
\tau^{(n)}(x-v_{14}\delta)
[\ipr{v_{02}}{x}]
[\ipr{\phi-v_{24}}{x}]
\\
&\quad\mbox{}-
\tau^{(n)}(x-(\phi-v_{02})\delta)
\tau^{(n)}(x-v_{24}\delta)
[\ipr{v_{01}}{x}]
[\ipr{\phi-v_{14}}{x}]
\end{split}
\end{equation}
of type $(\II_1)_n$. 
Applying $s_0$ to this formula, we obtain 
\begin{equation}
\begin{split}
&\tau^{(n+1)}(x+(\phi-v_{1234})\delta)
\tau^{(n-1)}(x-\phi\delta)
[\ipr{v_2-v_1}{x}][\ipr{v_{3}-v_{4}}{x}]
\\
&=
\tau^{(n)}(x-v_{23}\delta)
\tau^{(n)}(x-v_{14}\delta)
[\ipr{\phi-v_{13}}{x}]
[\ipr{\phi-v_{24}}{x}]
\\
&\quad\mbox{}-
\tau^{(n)}(x-v_{13})\delta)
\tau^{(n)}(x-v_{24}\delta)
[\ipr{\phi-v_{23}}{x}]
[\ipr{\phi-v_{14}}{x}]. 
\end{split}
\end{equation}
Using these formulas, we compute 
\begin{equation}
\begin{split}
&\tau^{(n+1)}(x+(v_0-v_3)\delta)
\tau^{(n+1)}(x+(\phi-v_{1245})\delta)
[\ipr{v_4-v_5}{x}]
[\ipr{\phi-v_{0123}}{x}]
\\
&\quad\cdot
\tau^{(n-1)}(x-\phi)^2
[\ipr{v_2-v_1}{x}]^2
\\
&
=
\big(
\tau^{(n)}(x-u_{01}\delta)
\tau^{(n)}(x-v_{13}\delta)
[\ipr{v_{02}}{x}]
[\ipr{u_{23}}{x}]
\\
&\qquad\mbox{}-
\tau^{(n)}(x-u_{02})
\delta)
\tau^{(n)}(x-v_{23}\delta)
[\ipr{v_{01}}{x}]
[\ipr{u_{13}}{x}]
\big)
\\
&\quad\cdot
\big(
\tau^{(n)}(x-v_{24}\delta)
\tau^{(n)}(x-v_{15}\delta)
[\ipr{u_{14}}{x}]
[\ipr{u_{25}}{x}]
\\
&\qquad\mbox{}-
\tau^{(n)}(x-v_{14})\delta)
\tau^{(n)}(x-v_{25}\delta)
[\ipr{u_{24}}{x}]
[\ipr{u_{15}}{x}]
\big)
\\
&=
\tau^{(n)}(x-u_{01}\delta)[\ipr{v_{02}}{x}]
\\
&\quad\cdot
\big\{
\tau^{(n)}(x-v_{13}\delta)
\tau^{(n)}(x-v_{24})\delta)
\tau^{(n)}(x-v_{15}\delta)
[\ipr{u_{23}}{x}]
[\ipr{u_{14}}{x}]
[\ipr{u_{25}}{x}]
\\
&\qquad\mbox{}-
\tau^{(n)}(x-v_{13}\delta)
\tau^{(n)}(x-v_{14})\delta)
\tau^{(n)}(x-v_{25}\delta)
[\ipr{u_{23}}{x}]
[\ipr{u_{24}}{x}]
[\ipr{u_{15}}{x}]
\big\}
\\
&\quad\mbox{}+
\tau^{(n)}(x-u_{02}\delta)[\ipr{v_{01}}{x}]
\\
&\quad\cdot
\big\{
\tau^{(n)}(x-v_{23}\delta)
\tau^{(n)}(x-v_{14})\delta)
\tau^{(n)}(x-v_{25}\delta)
[\ipr{u_{13}}{x}]
[\ipr{u_{24}}{x}]
[\ipr{u_{15}}{x}]
\\
&\qquad\mbox{}
-\tau^{(n)}(x-v_{23}\delta)
\tau^{(n)}(x-v_{24})\delta)
\tau^{(n)}(x-v_{15}\delta)
[\ipr{u_{13}}{x}]
[\ipr{u_{14}}{x}]
[\ipr{u_{25}}{x}]
\big\}
\end{split}
\end{equation}
Applying the cyclic permutation $(345)$ to this formula, 
we can directly observe that the sum of those three vanishes 
term by term. 
This completes the proof of Theorem 
\ref{thm:3C} in the case where 
$\tau^{(n-1)}(x)$ and $\tau^{(n)}(x)$ are 
$W(E_7)$-invariant.

\subsection{General case}
We now consider the general case where 
$\tau^{(n-1)}(x)$ and 
$\tau^{(n)}(x)$ are not necessarily $W(E_7)$-invariant. 
In such a situation, we need to deal with all the  
transforms $w.\tau^{(n-1)}$ and $w.\tau^{(n)}$ 
by $w\in W(E_7)$ simultaneously.  

For each hyperplane 
$H_\kappa=\brm{x\in V}{\ipr{\phi}{x}=\kappa}$ 
($\kappa\in\bC$) perpendicular to $\phi$, 
we introduce the covering space
\begin{equation}
\widetilde{H}_{\kappa}=W(E_7)\times H_{\kappa}, 
\end{equation}
and define the action of $w\in W(E_7)$
and the translation $T_{v}$ ($v\in H_0$) by
\begin{equation}
w.(g,x)=(wg,w.x),\quad T_{v}.(g,x)=(g,x+v\delta)
\quad(g\in W(E_7),\ x\in H_{\kappa}),  
\end{equation}
so that $wT_{v}=T_{w.v}w$.  
For each function $\psi$ on $\widetilde{H}_{\kappa}$, 
the induced actions 
$w.\psi$ ($w\in W(E_7)$) and $T_v.\psi$ ($v\in H_0$) 
are described as 
\begin{equation}
(w.\psi)(g,x)
=\psi(w^{-1}g,w^{-1}x),
\quad
(T_v.\psi)(g,x)
=\psi(g,x-v\delta). 
\end{equation}
For each function $\varphi$ on $H_{\kappa}$, 
we define a function 
$\widetilde{\varphi}$ on $\widetilde{H}_{\kappa}$ by 
\begin{equation}
\widetilde{\varphi}(g,x)=(g.\varphi)(x)=\varphi(g^{-1}.x)
\qquad(g\in W(E_7),\ x\in H_{\kappa}). 
\end{equation} 
Then this function 
$\widetilde{\varphi}$ 
is $W(E_7)$-invariant, and $\varphi$ is recovered 
from $\widetilde{\varphi}$ by  
$\varphi(x)=\widetilde{\varphi}(1,x)$ ($x\in H_{\kappa}$). 
Conversely, a function $\psi$ on 
$\widetilde{H}_{\kappa}$ is $W(E_7)$-invariant 
if and only if it is obtained as the lift 
$\psi=\widetilde{\varphi}$ 
of a function $\varphi$ on $H_{\kappa}$. 

Returning to the setting of Theorem \ref{thm:3C}, 
for the functions $\tau^{(k)}$ on $H_{c+k\delta}$
we consider the lifts 
$\widetilde{\tau}^{(k)}$ on $\widetilde{H}_{c+k\delta}$
($k=n-1,n$).  
Note that, for each $g\in W(E_7)$, 
$g.\tau^{(n-1)}$ and $g.\tau^{(n)}$ also satisfy 
the bilinear equations $(\I)_{n-1/2}$ and $(\II_0)_n$.  
Hence, the lifted functions $\widetilde{\tau}^{(k)}$ 
$(k=n-1,n)$ satisfy the 
bilinear equations corresponding to 
$(\I)_{n-1/2}$ and $(\II_0)_n$.
Formally, those bilinear equations can be written as
\begin{equation}
(\I)_{n-1/2}:\quad
[\ipr{a_1\pm a_2}{x}]\,
\widetilde{\tau}^{(n-1)}(g,x-a_0)
\widetilde{\tau}^{(n)}(g,x+a_0)+\cdots=0
\end{equation}
and 
\begin{equation}
(\II)_{n}:\quad
[\ipr{a_1\pm a_2}{x}]\,\widetilde{\tau}^{(n)}(g,x\pm a_0)+\cdots=0.  
\end{equation}
As in \eqref{eq:Toda0ij}, 
we can define a function 
$\psi=\psi(g,x)$ on $\widetilde{H}_{c+(n+1)\delta}$ so that 
\begin{equation}
\begin{split}
&\psi(g,x+a_0\delta)\widetilde{\tau}^{(n-1)}(g,x-a_0\delta)
[\ipr{a_i\pm a_j}{x}]
\\
&=
\widetilde{\tau}^{(n)}(g,x-(a_0\pm a_i)\delta)
[\ipr{a_0\pm a_j}{x}]
-
\widetilde{\tau}^{(n)}(g,x\pm a_j\delta)[\ipr{a_0\pm a_i}{x}]
\end{split}
\end{equation}
for any distinct $i,j\in\br{2,\ldots,7}$.  
Since $\widetilde{\tau}^{(k)}$ 
are $W(E_7)$-invariant on $\widetilde{H}_{c+k\delta}$
$(k=n-1,n)$, 
applying the previous arguments to the lifted functions, 
we see that $\psi$ is also $W(E_7)$-invariant and 
satisfies the bilinear equations corresponding to 
$(\II_1)_n$, 
$(\II_2)_n$, 
$(\I)_{n+1/2}$, 
$(\II_0)_{n+1}$. 
Since $\psi$ is $W(E_7)$-invariant, 
it is expressed as $\psi=\widetilde{\tau}^{(n+1)}$ 
with a function on $\tau^{(n+1)}$ on $H_{c+(n+1)\delta}$: 
\begin{equation}
\psi(g,x)=\widetilde{\tau}^{(n+1)}(g,x)=(g.\tau^{(n+1)})(x), 
\quad \tau^{(n+1)}(x)=\psi(1,x).  
\end{equation}
Then the bilinear equations 
$(\II_1)_n$, 
$(\II_2)_n$, 
$(\I)_{n+1/2}$, 
$(\II_0)_{n+1}$ for 
$\widetilde{\tau}^{(n-1)}$, 
$\widetilde{\tau}^{(n)}$ and 
$\psi=\widetilde{\tau}^{(n+1)}$ 
means 
that $\tau^{(n+1)}$ satisfies the 
corresponding 
bilinear 
equations of four types 
and 
that the recursive construction of  
$\tau^{(n+1)}$ from $\tau^{(n-1)}$, $\tau^{(n)}$ 
is equivariant with respect to the action of $W(E_7)$.  

\par\bigskip\noindent

\section*{Acknowledgments}

The author would like to thank his coworkers, 
Kenji Kajiwara, Tetsu Masuda, Yasuhiro Ohta and 
Yasuhiko Yamada, who allowed him to write in this form 
a paper largely based on the collaboration with them.  
He is also grateful to Yasushi Komori and Jun-ichi 
Shiraishi for providing valuable comments and suggestions 
in various stages of this work.

\section*{Funding}
 
This work was supported by JSPS Kakenhi Grant (B)15H03626. 


\end{document}